\documentclass[12pt]{amsart}
\usepackage{url,amstext,amsfonts,amssymb,amscd,amsbsy,amsmath,amsthm,verbatim, mathrsfs, cite, hyperref}
\usepackage{appendix}
\usepackage{color}

\usepackage[all,pdftex,arc,curve,color,frame,poly,2cell]{xy}
\usepackage{graphicx}

\usepackage{ytableau} 

\usepackage{geometry}                
\geometry{letterpaper,tmargin=2.5cm,bmargin=2.5cm,lmargin=2.5cm,rmargin=2.5cm}                    
\setcounter{MaxMatrixCols}{20}

\newtheorem{theorem}{Theorem}[section]
\newtheorem{defn}[theorem]{Definition}
\newtheorem{conjecture}[theorem]{Conjecture}
\newtheorem{prop}[theorem]{Proposition}

\newtheorem{lemma}[theorem]{Lemma}

\theoremstyle{remark}
\newtheorem{remark}[theorem]{Remark}
\newtheorem{example}[theorem]{Example}
\theoremstyle{definition}


\newcommand{\defi}[1]{\textsf{#1}} 

\newcommand{\Gr}{\operatorname{Gr}}

\newcommand{\GL}{\operatorname{GL}}

\def\o{{\otimes }}
\def\u#1{{\underline{#1} }}
\def\w#1{{\widehat{#1} }}
\def\R{{\mathcal R}}
\def\B{{\mathcal B}}

\def\Q{{\mathcal Q}}
\def\V{{\mathcal V}}
\def\W{{\mathcal W}}
\def\O{{\mathcal O}}

\def\I{{\mathcal I}}
\def\T{{\mathcal T}}
\def\M{{\mathcal M}}

\def\E{{\mathcal E}}

\newcommand{\CC}{\mathbb{C}}

\newcommand{\PP}{\mathbb{P}}

\newcommand{\ZZ}{\mathbb{Z}}

\def\AA{{A_{1}^{*}\o\dots\o A_{n}^{*}}}
\def\AAn{{A_{1}\o\dots\o A_{n}}}
\def\AAp{{A'^{*}_{1}\o\dots\o A'^{*}_{n}}}
\def\xx{{x_{1}\o\dots\o x_{n}}}

\def\tA{{A^{*}_{1}\times\dots\times A^{*}_{n}}}
\def\PPA{{\PP A^{*}_{1}\times\dots\times \PP A^{*}_{n}}}
\def\Rn{{\R^{*}_{1}\otimes\dots\otimes \R^{*}_{n}}}
\def\PPAp{{\PP A'^{*}_{1}\times\dots\times \PP A'^{*}_{n}}}

\def\GLA{{\GL(A_{1})\times\dots\times\GL(A_{n})}}
\def\GLAp{{\GL(A'_{1})\times\dots\times\GL(A'_{n})}}

\def\Gran {{\Gr(r_{1},A_{1}^{*})\times \dots \times \Gr(r_{n},A_{n}^{*})}}
\def\SubA{{\Sub_{r_{1},\dots,r_{n}}(A_{1}^{*}\o\dots\o A_{n}^{*})}}

\def\o{{\otimes }}
\def\u#1{{\underline{#1} }}
\def\w#1{{\widehat{#1} }}

\newcommand{\Sym}{\operatorname{Sym}}

\def\o{{\otimes}}
\def\bw#1{{\textstyle\bigwedge^{\hspace{-.2em}#1}}}

\newcommand{\codim}{\operatorname{codim}}
\newcommand{\depth}{\operatorname{depth}}

\newcommand{\rank}{\operatorname{Rank}}

\newcommand{\Seg}{\operatorname{Seg}}
\newcommand{\Sub}{\operatorname{Sub}}

\def\red#1{{\textcolor{red}{#1}}}
\def\blue#1{{\textcolor{blue}{#1}}}

\begin{document}
\author[Oeding]{Luke Oeding}
\address{Department of Mathematics, Auburn University}
\date{\today}
\title[aCM Secants]{Are all Secant Varieties of Segre Products \\ Arithmetically Cohen-Macaulay?}
\begin{abstract}
When present, the Cohen-Macaulay property can be useful for finding the minimal defining equations of an algebraic variety.  It is conjectured that all secant varieties of Segre products of projective spaces are arithmetically Cohen-Macaulay. A summary of the known cases where the conjecture is true is given. An inductive procedure based on the work of Landsberg and Weyman (LW-lifting) is described and used to obtain resolutions of orbits of secant varieties from those of smaller secant varieties.   A new computation of the minimal free resolution of rank 4 tensors of format $3 \times 3 \times 4$ is given. LW-lifting is used to prove several cases where secant varieties are arithmetically Cohen-Macaulay and arithmetically Gorenstein. 
\end{abstract}
\maketitle

\section{Introduction}
Implicitization problems are central in Applied Algebraic Geometry.  Starting, for instance, with an algebraic-statistical model for structured data (such as tensors with low rank) we often ask for the implicit defining equations for the associated algebraic variety. These equations might be used, for instance, for testing whether a data point is on the given model.  Usually some of these equations can be found (for example by linear algebra, ad hoc methods, or analyzing symmetry). A difficult problem is then to determine when the known equations suffice.  Algebraic properties such as the arithmetically Cohen-Macaulay (aCM) property (see Definition \ref{def:aCM}) can be quite useful for this, if they can be determined.

This note focuses on tensors of restricted border rank, or secant varieties of Segre products, denoted $\sigma_{r}(\Seg (\PP^{n_{1}-1} \times \dots\times \PP^{n_{d}-1})  )$, which consist of the (Zariski closure of the)  tensors of format $n_{1}\times\dots\times n_{d}$ and rank $r$.
 Our main focus is on the following.
\begin{conjecture}\label{conj:aCM}
 All secant varieties of Segre product of projective spaces are arithmetically Cohen-Macaulay.
\end{conjecture}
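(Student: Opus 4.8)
The plan is not to settle Conjecture~\ref{conj:aCM} outright --- once the rank $r$ is large relative to the $n_{i}$ the classification of tensors up to $\GL$-equivalence becomes wild, so no single orbit analysis can treat all formats --- but to build an inductive machine that certifies the aCM (and, where possible, the arithmetically Gorenstein) property and then to feed it the known base cases. Write $V=A_{1}\o\dots\o A_{d}$ and $G=\GL(A_{1})\times\dots\times\GL(A_{d})$, and recall that $\sigma_{r}\bigl(\Seg(\PP^{n_{1}-1}\times\dots\times\PP^{n_{d}-1})\bigr)\subseteq\PP V$ is aCM precisely when the affine cone $\widehat{\sigma_{r}}$ is Cohen--Macaulay, equivalently when the minimal free resolution of $S/I$ over $S=\Sym(A_{1}^{*}\o\dots\o A_{d}^{*})$ has length $\codim\sigma_{r}$. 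Since a variety with rational singularities is Cohen--Macaulay, it is enough to exhibit, for each orbit closure occurring in $\sigma_{r}$, a desingularization whose higher direct images of the structure sheaf vanish; by the Landsberg--Weyman geometric technique the same data also produces the explicit minimal free resolution, which is ultimately what one wants for pinning down the minimal generators of $I$.

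First I would stratify $\sigma_{r}$ by $G$-orbits, writing $\sigma_{r}\supset\sigma_{r-1}\supset\dots$, and argue orbit closure by orbit closure, the $\sigma_{r-1}$ stratum being where a previously treated case feeds in. For the desingularization I would use a Kempf collapsing over the product of Grassmannians $B=\Gr(r_{1},A_{1})\times\dots\times\Gr(r_{d},A_{d})$ (refined to a tower of Grassmannian bundles should smoothness require it), whose factors record the rank-$r_{i}$ ``support'' subspaces of a border-rank-$r$ tensor: over $B$ one has a bundle $\mathcal{E}$ inside the trivial bundle $B\times V$ with fibers $\mathcal{R}_{1}\o\dots\o\mathcal{R}_{d}$, and inside $\mathcal{E}$ the relative secant variety of the tautological small Segre product, which the projection $B\times V\to V$ carries birationally onto $\widehat{\sigma_{r}}$ --- a general rank-$r$ tensor has $i$-th flattening of rank $r_{i}$, and these column spaces pin down the support subspaces. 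The \emph{LW-lifting} is the resulting operation on resolutions: sheafify over $B$, with the appropriate twist, the minimal free resolution of the \emph{smaller} secant variety $\widehat{\sigma_{r}}\bigl(\Seg(\PP^{r-1}\times\dots\times\PP^{r-1})\bigr)$ (or of the relevant orbit closure therein) living on the fibers, and push it down along the collapsing. A Weyman-type computation then yields a minimal complex of the shape
\[
F_{i}\;=\;\bigoplus_{j\ge 0}H^{j}\!\bigl(B,\ \bw{i+j}\eta^{*}\bigr)\o S(-i-j),
\]
$\eta$ being the relevant homogeneous quotient bundle on $B$; Bott's theorem converts each $H^{j}(B,\bw{k}\eta^{*})$ into an explicit $G$-module, and the LW-lifting is precisely the device that reduces these cohomology groups to the already-known resolution on the fibers, one factor at a time.

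Establishing aCM then comes down to the vanishing $R^{>0}\pi_{*}\O_{Z}=0$ for the collapsing $\pi\colon Z\to\widehat{\sigma_{r}}$: combined with normality of $\widehat{\sigma_{r}}$ (known in the relevant cases) this gives rational singularities, hence Cohen--Macaulayness, hence aCM; and the same vanishing makes $F_{\bullet}$ an honest resolution of $S/I$, forcing its length down to $\codim\sigma_{r}$. This is where the induction bites: since $Z$ fibers over $B$ with fibers the cones over the smaller secant variety of the tautological Segre product $\Seg(\PP^{r-1}\times\dots\times\PP^{r-1})$, $R\pi_{*}\O_{Z}$ is assembled from the cohomology on $B$ of the twisted, sheafified minimal free resolution of that smaller cone, and the desired vanishing follows once that resolution is already short enough --- that is, once the smaller secant variety is aCM --- provided the accompanying Bott-type cohomology estimates on $B$ hold. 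Arithmetic Gorenstein-ness, when present, is then read off from self-duality of $F_{\bullet}$ up to a shift and twist, i.e.\ from a Serre-duality symmetry of the relevant bundle cohomology on $B$.

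The main obstacle is precisely this vanishing when $r$ is large relative to the $n_{i}$. There the orbit stratification of $\sigma_{r}$ genuinely degenerates --- infinitely many orbits, continuous moduli --- so no single Kempf collapsing captures all of $\sigma_{r}$; the inductive twists may drop outside the Bott vanishing range; and, most seriously, the rational-singularities route is sufficient but not necessary for aCM, so a secant variety that is aCM yet not rational --- should one exist --- would slip through entirely, forcing a direct analysis of $F_{\bullet}$ instead. I therefore expect this method to deliver Conjecture~\ref{conj:aCM} for the secant varieties reachable by finitely many lifting steps from the established base cases --- small $r$, or small formats such as $3\times3\times4$ --- while leaving the general statement open; the honest deliverable is a conditional theorem of the form ``if $\sigma_{r'}$ of the sub-format is aCM (resp.\ aG) and the lifting hypotheses hold, then so is $\sigma_{r}$,'' instantiated at the cases the paper treats.
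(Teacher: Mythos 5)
Since the statement is a conjecture that the paper itself does not (and cannot yet) prove, there is no "paper's own proof" to match; what you describe — stratifying by orbits, desingularizing via a Kempf collapsing over products of Grassmannians, pushing the fiberwise resolution of the smaller secant down via Weyman's geometric technique and Bott's algorithm, and packaging this as a conditional ``LW-lifting'' theorem — is precisely the strategy the paper adopts in Theorem~\ref{thm:main} and the Appendices. Your honest caveats (the approach only certifies cases reachable by finitely many lifts from aCM base cases with suitably small resolutions, and rational singularities is sufficient but not necessary for aCM) also match the paper's own framing, so this is essentially the same approach.
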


In Section~\ref{sec:NotationEvidence} we set our notation and collect all the evidence for Conjecture~\ref{conj:aCM}. 

A standard approach to proving such a conjecture about a family that depends on several integer parameters would be to apply a multi-step induction. The base case $r=1$ and $n_{i}$ all arbitrary is true because it is well known that homogeneous varieties (of which the Segre variety is one) are aCM. If we knew, for instance, that secants of aCM varieties were also aCM, then we would be done. At present we don't know if this statement is valid and if so, how to prove it. So we consider a more sophisticated induction procedure.
Landsberg and Weyman's method in \cite{Landsberg-Weyman-Bull07} is the first step in this direction: under certain technical hypotheses (resolutions by small partitions) and for $n_{i}\geq r$ for all $i$, the aCM property is inherited.
\begin{theorem}[Landsberg-Weyman \cite{Landsberg-Weyman-Bull07} ]
Suppose  $\sigma_{r}(\Seg (\PP^{r-1} \times \dots\times \PP^{r-1})  )$ is aCM and has a resolution by small partitions. Then  $\sigma_{r}(\Seg (\PP^{n_{1}-1} \times \dots\times \PP^{n_{d}-1})  )$  is aCM for all $n_{i}\geq r$. 
\end{theorem}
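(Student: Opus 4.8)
The plan is to combine the $\GL(V_{1})\times\dots\times\GL(V_{d})$-equivariance of minimal free resolutions with Weyman's geometric technique applied to a Grassmannian-bundle desingularization, using the ``small partitions'' hypothesis to make the attendant cohomology computation uniform in the $n_{i}$. Write $A_{i}=V_{i}^{*}$ with $\dim V_{i}=n_{i}\ge r$ and let $A=\Sym(V_{1}^{*}\o\dots\o V_{d}^{*})$ be the homogeneous coordinate ring of the ambient $\PP(V_{1}\o\dots\o V_{d})$. A tensor has border rank $\le r$ exactly when it lies in $W_{1}\o\dots\o W_{d}$ for some $r$-dimensional $W_{i}\subseteq V_{i}$, \emph{and} has border rank $\le r$ inside that $r\times\dots\times r$ tensor space. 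This exhibits the affine cone $\widehat{\sigma}_{r}$ over $\sigma_{r}(\Seg(\PP V_{1}\times\dots\times\PP V_{d}))$ inside the total space of $\E=\R_{1}\o\dots\o\R_{d}$ over $B=\Gr(r,V_{1})\times\dots\times\Gr(r,V_{d})$ (with $\R_{i}\subseteq V_{i}\o\O_{B}$ the tautological rank-$r$ subbundle) as the subscheme whose fiber over $([W_{1}],\dots,[W_{d}])$ is the cone over $\sigma_{r}(\Seg(\PP W_{1}\times\dots\times\PP W_{d}))$ in $W_{1}\o\dots\o W_{d}$; the image of the whole total space is the subspace variety $\Sub_{r,\dots,r}(V_{1}\o\dots\o V_{d})$, which is known to be normal, Cohen--Macaulay, with rational singularities, and resolved by the geometric technique.

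First I would sheafify the hypothesized minimal free resolution $F_{\bullet}\to A_{\square}/I_{\square}$ of the square case, $A_{\square}=\Sym((\CC^{r}\o\dots\o\CC^{r})^{*})$. By equivariance each $F_{j}$ is a direct sum of twisted Schur modules $S_{\lambda^{(1)}}\CC^{r}\o\dots\o S_{\lambda^{(d)}}\CC^{r}$; replacing every factor $\CC^{r}$ by $\R_{i}$ and $A_{\square}$ by $\Sym(\E^{*})$ turns $F_{\bullet}$ into a $B$-relative finite free resolution $\F_{\bullet}$ of $\O_{\widehat{\sigma}_{r}}$ as a sheaf of $\Sym(\E^{*})$-modules, the differentials gluing because those of $F_{\bullet}$ are $\GL_{r}^{\times d}$-equivariant. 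I would then run the Lascoux--Kempf--Weyman geometric technique: tensor $\F_{\bullet}$ with the Koszul complex of $\xi^{*}$, where $\xi=\bigl((V_{1}\o\dots\o V_{d})\o\O_{B}\bigr)/\E$, and push down along $B\to\mathrm{pt}$. Since $\widehat{\sigma}_{r}$ inherits rational singularities from the square case and the projection from the preimage of $\widehat{\sigma}_{r}$ in the total space onto $\widehat{\sigma}_{r}$ is birational with connected fibers once $n_{i}\ge r$ (the $i$-th flattening of a generic border-rank-$r$ tensor pins down $W_{i}$ uniquely), the hypotheses of the technique hold, and the resulting complex $G_{\bullet}$ of free $A$-modules, with
\[
G_{k}=\bigoplus_{i,j} H^{i}\!\Bigl(B,\ \F_{j}\o{\textstyle\bigwedge^{\,i+k-j}}\xi^{*}\Bigr)\o A(\text{twist}),
\]
is a free resolution of $A/I_{\widehat{\sigma}_{r}}$.

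The crux -- and the step I expect to be the main obstacle -- is to control the cohomology of $\F_{j}\o\bigwedge^{m}\xi^{*}$ on $B$ \emph{uniformly in the} $n_{i}$, which is exactly what the ``small partitions'' hypothesis is designed to provide. These sheaves have filtrations whose graded pieces are external products over $i=1,\dots,d$ of $S_{\alpha^{(i)}}\R_{i}\o S_{\beta^{(i)}}\Q_{i}^{*}$, with $\Q_{i}=(V_{i}\o\O_{B})/\R_{i}$; by Borel--Weil--Bott on $\Gr(r,n_{i})$ their cohomology is computed by appending $\rho_{n_{i}}=(n_{i}-1,\dots,1,0)$ to the weight and sorting. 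When every partition $\lambda^{(i)}$ occurring in $F_{\bullet}$ is small, the extra ``$\R_{i}$-weight'' contributed by $\F_{j}$ is narrow enough that, for \emph{every} $n_{i}\ge r$, it produces neither repeated weights (which would kill cohomology) nor inversions across the $\R_{i}/\Q_{i}^{*}$ divide; hence the cohomology of $\F_{j}\o\bigwedge^{m}\xi^{*}$ lies in exactly the same single homological degree as that of $\bigwedge^{m}\xi^{*}$ alone (the subspace-variety computation), merely tensored by explicit Schur modules $S_{\mu^{(1)}}V_{1}\o\dots\o S_{\mu^{(d)}}V_{d}$ and depending on $n_{i}$ only through the harmless $\rho_{n_{i}}$-shift.

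Granting this, for every $n_{i}\ge r$ the complex $G_{\bullet}$ has $G_{k}\ne 0$ only when $0\le k-j\le\codim(\Sub_{r,\dots,r})$ for some $0\le j\le\operatorname{length}(F_{\bullet})$, using that the geometric-technique resolution of the (Cohen--Macaulay) subspace variety has length $\codim(\Sub_{r,\dots,r})$. Since $\operatorname{length}(F_{\bullet})$ equals the codimension of the square secant variety in $\CC^{r^{d}}$ (the square case being Cohen--Macaulay by hypothesis) and $\codim(\widehat{\sigma}_{r})=\codim(\Sub_{r,\dots,r})+\operatorname{length}(F_{\bullet})$, this gives $\operatorname{length}(G_{\bullet})\le\codim(\widehat{\sigma}_{r})$. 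As $\operatorname{pd}_{A}(A/I_{\widehat{\sigma}_{r}})\ge\codim(\widehat{\sigma}_{r})$ always, equality holds, so $\depth(A/I_{\widehat{\sigma}_{r}})=\dim(A/I_{\widehat{\sigma}_{r}})$ and $\sigma_{r}(\Seg(\PP^{n_{1}-1}\times\dots\times\PP^{n_{d}-1}))$ is arithmetically Cohen--Macaulay, closing the induction. Besides the Bott-theoretic uniformity, the other point requiring care is verifying that the geometric technique genuinely outputs a resolution here -- i.e.\ that $\widehat{\sigma}_{r}$ has rational singularities for all $n_{i}\ge r$ -- which should again be inheritable from the square case.
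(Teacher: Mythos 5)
Your plan is essentially the same as the paper's: sheafify the equivariant resolution of the square case over $B=\Gr(r,V_{1})\times\dots\times\Gr(r,V_{d})$, run Weyman's geometric technique against the subspace-variety desingularization, use the ``small partitions'' hypothesis to control the Bott cohomology uniformly in $n_{i}$, and win by a length count (your Koszul-pushdown $G_{\bullet}$ is exactly the iterated mapping cone / double-complex total complex that the paper builds, and Lemma~\ref{lem:MCMmodule} is the cohomology-control step you identify as the crux). One caution on the cohomology step: your assertion that $H^{\bullet}(B,\F_{j}\o\bigwedge^{m}\xi^{*})$ sits in ``exactly the same single homological degree as $\bigwedge^{m}\xi^{*}$ alone'' is stronger than what is actually needed and is not what the paper proves; the paper instead establishes that each pushed-down module $M_{j}=H^{0}(B,\E_{j})$ is a \emph{maximal Cohen--Macaulay} module on the subspace variety by showing that the twisted dual $\E_{j}^{\vee}=K_{B}\o\bigwedge^{\rank\xi}\xi^{*}\o\E_{j}^{*}$ has $H^{d}(B,\bigwedge^{i+d}\xi\o\E_{j}^{\vee})=0$ for all $i+d>0$ (acyclicity via the Bott criterion of Lemma~\ref{lem:top}), which by the duality $F(\E_{j}^{\vee})_{s}=F(\E_{j})^{*}_{s-(\rank\xi-\dim B)}$ forces $F(\E_{j})_{\bullet}$ to have no negative terms and hence length $\codim\Sub$. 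You should replace the ``single degree'' claim with this dual-acyclicity argument; the rest of the structure goes through as you describe.
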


From this we are naturally led to ask two questions.  (1) What does the technical hypothesis ``resolution by small partitions,''  mean, and is it satisfied for the varieties we're interested in? (2) Can we adapt LW-lifting to the cases when $n_{i}<r$ for some $i$? 
In the appendices we repeat Landsberg and Weyman's argument almost verbatim but adapted to the smaller dimensional cases to prove the best result we can using this technique in the cases $n_{i}<r$, partially answering question (2), and explain the appearance of the curious ``resolution by small partitions.''

An $R$-module $M$ has a resolution by small partitions if the Schur modules which occur in the $G$-equivariant resolution of $M$ are indexed by partitions that fit inside prescribed sized boxes. 
More specifically, we will say that a $G$-variety $Y$ has an \emph{$(s_{j})$-small resolution} if its coordinate ring has a free resolution with the property that every Schur module $S_{\pi}A$ occurring in the resolution satisfies the following condition
\[\text{for each $j$ the first part of $\pi^{j}$ is not greater than $s_{j}$.}\]  

Suppose vector spaces $A_{i}' \subseteq A_{i}$ for $1\leq i \leq n$.
Let $\w{a_{j}}:=\frac{a_{1}\dots a_{n}}{a_{j}}$,  $\w{r_{j}} :=\frac{r_{1}\dots r_{n}}{r_{j}}$, $G = \GLA$ and $G' = \GLAp$.
Now we can state our result:
\begin{theorem}[(adapted from \cite{Landsberg-Weyman-Bull07})]\label{thm:main}
If a $G'$-variety $Y$ is aCM with a resolution 
 that is $(\w{r_{j}} -r_{j})$-small 
 for every $j$ for which $0<r_{j} <a_{j}$,
then $\overline{G.Y}$ is aCM.

Moreover we obtain a (not necessarily minimal) resolution of $\overline{G.Y}$ that is $(s_{j})$-small with

\begin{center} $s_{j} = 
\max_{\pi}
\begin{cases}
 \w{a_{j}}-r_{j}, & \text{ if } r_{j}<a_{j} \\
 \w{a_{j}} - \w{r_{j}}+\pi^{j}_{1}, & \text{ if }r_{j}=a_{j},
\end{cases}$\end{center}

where the $\max$ is taken over all multi-partitions $\pi$ occurring in the resolution of $\CC[Y]$.
\end{theorem}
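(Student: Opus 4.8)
The plan is to run the Kempf--Weyman ``geometric technique'' --- collapsing an incidence variety over a product of Grassmannians --- following \cite{Landsberg-Weyman-Bull07} almost verbatim, while keeping track of the Schur functors carefully enough that the smallness bookkeeping goes through. Write $V = \AA$ for the ambient tensor space and $V' = \AAp$, and let $B = \Gran$, where the $j$th factor is a point when $r_{j}=a_{j}$. On the $j$th factor let $\R_{j}\subseteq A_{j}^{*}\o\O_{B}$ be the tautological rank-$r_{j}$ subbundle and $\Q_{j} = (A_{j}^{*}\o\O_{B})/\R_{j}$ the quotient. Then $\R_{1}\o\dots\o\R_{n}$ is a subbundle of $V\o\O_{B}$ whose collapsing onto $V$ is the subspace variety $\SubA$; let $Z$ be the subvariety of the total space of $\R_{1}\o\dots\o\R_{n}$ cut out fibrewise by the equations of the affine cone over $Y$. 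Then $Z$ is irreducible, $Z$ is a Zariski-locally-trivial bundle over the smooth base $B$ with fibre the affine cone over $Y$ (hence $Z$ is Cohen--Macaulay, since $Y$ is aCM), and the projection $q\colon Z\to V$ maps birationally onto the affine cone over $\overline{G.Y}$.

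Next I would assemble a complex on $B\times V$ resolving $\O_{Z}$. Relativise a $G'$-equivariant free resolution $F_{\bullet}\to\CC[Y]$ over $B$ by replacing each space $A_{j}'$ occurring in it by the tautological bundle $\R_{j}^{*}$ (and its dual by $\R_{j}$); by $G'$-equivariance this is well defined, and it resolves $\O_{Z}$ over the total space of $\R_{1}\o\dots\o\R_{n}$. Tensor this with the Koszul complex $\bigwedge^{\bullet}\N^{*}\o\Sym(V^{*})$ resolving the structure sheaf of that total space inside $\O_{B\times V} = \O_{B}\o\Sym(V^{*})$, where $\N$ is the quotient bundle $(V\o\O_{B})/(\R_{1}\o\dots\o\R_{n})$, which splits as $\bigoplus_{\varnothing\neq S}\bigl(\bigotimes_{j\in S}\Q_{j}\bigr)\o\bigl(\bigotimes_{j\notin S}\R_{j}\bigr)$ over the nonempty $S\subseteq\{1,\dots,n\}$. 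Expanding the exterior powers by the Cauchy formula, every term of the resulting (generally non-minimal) complex $\G_{\bullet}\to\O_{Z}$ on $B\times V$ is a direct sum of bundles $\bigotimes_{j}\bigl(S_{\alpha^{j}}\R_{j}\o S_{\beta^{j}}\Q_{j}\bigr)$ twisted by $\Sym(V^{*})$, where $\alpha^{j}$ combines a partition $\pi^{j}$ coming from $F_{\bullet}$ with Cauchy pieces of the factors $\bigotimes_{k\notin S}\R_{k}$, and $\beta^{j}$ comes from Cauchy pieces of the factors $\bigotimes_{k\in S}\Q_{k}$.

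Now push forward along $p\colon B\times V\to V$ and run the hypercohomology spectral sequence, whose $E_{1}$ page is built from the sheaf cohomology $H^{\bullet}(B,\G_{\bullet})$ and which converges to $\mathbf{R}q_{*}\O_{Z}$. The heart of the proof is the Bott/Borel--Weil--Bott computation on the product $B$: each bundle $\bigotimes_{j}\bigl(S_{\alpha^{j}}\R_{j}\o S_{\beta^{j}}\Q_{j}\bigr)$ has cohomology concentrated in a single degree, determined on the $j$th factor by the regularity of the weight $(\alpha^{j}\,|\,\beta^{j})+\rho$ and the length of the Weyl element sorting it; the $(\widehat{r_{j}}-r_{j})$-smallness of $F_{\bullet}$ is exactly the constraint that, for every bundle occurring in $\G_{\bullet}$, keeps that weight regular and pins its cohomology to the predicted degree --- in particular it forces the high-homological-degree terms (those close to $\bigwedge^{\mathrm{top}}\N^{*}$) to contribute cohomology only in high enough degrees that the spectral sequence collapses to a finite free $\Sym(V^{*})$-complex of length at most $\codim\overline{G.Y}$. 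Since $q$ is birational onto $\overline{G.Y}$, which is normal, and $Z$ is Cohen--Macaulay, one has $q_{*}\O_{Z} = \O_{\overline{G.Y}}$ and $R^{>0}q_{*}\O_{Z}=0$ (as in \cite{Landsberg-Weyman-Bull07}), so this complex is an honest free resolution of $\CC[\overline{G.Y}]$; its projective dimension is therefore at most, hence equal to, $\codim\overline{G.Y}$, and Auslander--Buchsbaum gives $\depth\CC[\overline{G.Y}] = \dim\CC[\overline{G.Y}]$, i.e.\ $\overline{G.Y}$ is aCM.

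For the ``moreover'' statement I would read off the Schur modules from the surviving cohomology: by Borel--Weil--Bott each contributing group is a sum of $\bigotimes_{j}S_{\nu^{j}}A_{j}^{*}$, with $\nu^{j}$ obtained from $(\alpha^{j}\,|\,\beta^{j})$ by a Weyl element, so the first part of $\nu^{j}$ is at most $\max(\alpha^{j}_{1},\beta^{j}_{1})$. When $r_{j}<a_{j}$ one bounds $\alpha^{j}_{1}$ via the $(\widehat{r_{j}}-r_{j})$-hypothesis and $\beta^{j}_{1}$ by the rank of the relevant Koszul factor, arriving at $\widehat{a_{j}}-r_{j}$; when $r_{j}=a_{j}$ the $j$th factor of $B$ is a point, there is no $\Q_{j}$-row, and the only growth in the $\R_{j}$-row is the initial $\pi^{j}_{1}$ together with the Cauchy contribution of a $\bigotimes_{k\notin S}\R_{k}$-factor, bounded by $\widehat{a_{j}}-\widehat{r_{j}}$, giving $\widehat{a_{j}}-\widehat{r_{j}}+\pi^{j}_{1}$; taking the maximum over the $\pi$ occurring in $F_{\bullet}$ yields the stated $s_{j}$. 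The step I expect to be the main obstacle is precisely this Bott bookkeeping: one must enumerate all bundles that genuinely occur in $\G_{\bullet}$ --- products of a term of the relativised $F_{\bullet}$ with a term of $\bigwedge^{\bullet}\N^{*}$ --- and verify that $\widehat{r_{j}}-r_{j}$ is strong enough both to keep every occurring weight regular (so the geometric technique applies cleanly) and to push the top-degree terms into high enough cohomological degree (so the pushforward has length $\leq\codim\overline{G.Y}$), yet no stronger than necessary. The remaining ingredients --- conciseness of $Y$ so that $q$ is birational, normality of $\overline{G.Y}$, and $R^{>0}q_{*}\O_{Z}=0$ --- are handled as in \cite{Landsberg-Weyman-Bull07}.
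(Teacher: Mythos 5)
Your plan is essentially the paper's argument: relativise the $G'$-equivariant resolution of $\CC[Y]$ over $B=\Gran$, couple it with the Koszul complex of $\xi$, push forward, and bound the length of the resulting complex by the codimension of $\overline{G.Y}$. The organizational difference is that you run a single hypercohomology spectral sequence, whereas the paper first isolates a key lemma (Lemma~\ref{lem:MCMmodule}) showing that each relativised term $M_i := H^0(B,\E_i)$ is a maximal Cohen--Macaulay module supported on $\SubA$, then assembles an iterated mapping cone from the resolutions of the $M_i$; these are two packagings of the same double complex, so nothing is lost either way. The one place where your description is off is the sentence claiming the smallness hypothesis ``keeps that weight regular and pins its cohomology to the predicted degree.'' That is not the mechanism. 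What the paper actually shows (Lemma~\ref{lem:top} and the body of the proof of Lemma~\ref{lem:MCMmodule}) is that the smallness bound $\pi^j_1\le\widehat{r_j}-r_j$ makes the twisted dual bundle $\M^\vee = K_B\otimes\bw{\rank\xi}\xi^*\otimes\M^*$ \emph{acyclic} --- i.e.\ its weight becomes \emph{singular} under Bott's reflections, killing all higher cohomology --- which via the duality $F(\M^\vee)_s = F(\M)^*_{s-(\rank\xi-\dim B)}$ forces $F(\M)_\bullet$ to have no terms in negative degree, hence to have length exactly $\codim\SubA$. Singularity, not regularity, is what you want there. You honestly flag the Bott bookkeeping as the obstacle, and indeed that duality-plus-singularity step is precisely the content you would still need to supply. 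For the ``moreover'' part, your bound $\nu^j_1\le\max(\alpha^j_1,\beta^j_1)$ is a valid but loose upper estimate (Bott sorting actually gives $\max(\alpha^j_1,\beta^j_1-r_j)$); the paper avoids this by computing the last term of the resolution explicitly via $H^0(B,\M^\vee)^*$ and reading off $s_j$ from it, which is cleaner and also yields the refined remark that every partition in the resolution is dominated by one from the final term.
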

Our proof of this theorem is quite involved, but is carried over almost verbatim from Landsberg and Weyman's work. We include the details in the Appendices so that we can get the more refined version of the result.  
In Appendix \ref{sec:orbits} we describe the relevant algebraic varieties as orbits of smaller varieties. 
In Appendix \ref{sec:weyman} we recall Weyman's geometric technique combined with a partial desingularization of orbit closures by subspace varieties.
In Appendix \ref{sec:cone} we describe an iterated mapping cone construction, which uses the calculations of cohomology of vector bundles aided by Bott's algorithm and the Borel-Weil theorem described in \ref{sec:Bott}.

In Section~\ref{newACM} we apply this result to several situations yielding many new families of secant varieties of Segre products for which the aCM Conjecture holds. 
In Section~\ref{newAG} we apply this result to determine new cases where secant varieties of Segre products are arithmetically Gorenstein.

\subsection{First Questions}
Recall that if $X \subset \PP^{N}$ is an algebraic variety, its secant varieties $\sigma_{r}(X)\subset \PP^{N}$ are defined as the Zariski closure of all points of the form $[x_{1}+\cdots+ x_{r}]$ with all $x_{i}\in X$.  Secant varieties arise in many different contexts. Perhaps the first place is in classical algebraic geometry. One may ask when can a given projective variety $X\subset \PP^{n}$ be isomorphically projected into $\PP^{n-1}$?
The answer is determined by the dimension of the secant variety $\sigma_{2}(X)$.  If $\sigma_{2}(X)$ does not fill the ambient $\PP^{N}$, then there is a point outside $X$ from which one can project $X$ into a $\PP^{N-1}$, \cite{Harris, DolgachevAG}. 

There are many modern applications of tensors and secant varieties, such as Geometric Complexity Theory \cite{LandsbergTensorBook, LandsbergSummary}, Algebraic Statistics \cite{Pachter-Sturmfels, AllmanRhodes08}, Phylogenetics and the so-called \emph{salmon conjecture}, \cite{prize, Friedland2010_salmon, BatesOeding, Friedland-Gross2011_salmon}, Signal Processing \cite{SahnounComon, deSilva_Lim_ill_posed, lathauwer:642_matrix_diag, OedOtt13_Waring}, and Coding Theory \cite{AOP_Grassmann}. 

What is the dimension of $\sigma_{r}(X)$ and for which $r$ does $\sigma_{r}(X)$ fill the ambient $\PP \CC^{N}$? While an expected answer can be easily calculated by dimension counting, the actual dimension might be less than expected. 

For Veronese varieties all dimensions of all higher secant varieties are known, thanks to the work of Alexander and Hirschowitz, \cite{AH95}. See also  Ottaviani and Brambilla's nice exposition \cite{OttBra08_AH}. 
Regarding secant varieties of Segre products, their dimensions have been widely studied, see \cite{ChiCil06_ksecant, Ballico2005_weak, CGG_P1s, CGG4_Segre, AOP_Segre} for some highlights, however the analogue to the Alexander-Hirschowitz result is not yet complete.

Perhaps the next question is to find polynomial defining equations of $\sigma_{r}(X)$. This is useful, in particular for membership testing, and has also been widely studied in the Segre case:
\cite{Ott07_Luroth,Landsberg-Manivel04,
Landsberg-Weyman-Bull07, RaicuGSS, SidmanSullivant_Prolongations, LanWey_secant_CHSS, LanOtt11_Equations, RaicuCat, Strassen83_rank, CEO, LanMan08_Strassen,
Bernardi_Ideal_Sym, Kanev_catalecticant,
RaicuGSS, Raicu_thesis, SidmanVermeire_Secant}.

There is also much interest in actually finding a decomposition: Given $\T \in \CC^{N}$, can you find an expression of $\T $ as a sum of points from $X$? For recent progress, see  \cite{Nie_STD, OedOtt13_Waring, CGLM_Waring, JMLR:v15:anandkumar14a}.

One may also ask if \emph{generic (respectively specific) identifiability} holds. That is, does a general (resp. specific) $\T \in \CC^{N}$ have an essentially unique (up to reordering and re-scaling) decomposition?  See  \cite{ChiOttVan, BocciChiantini, BocciChiantiniOttaviani, HOOS} for the state of the art on these topics.

Knowing equations of secant varieties can help with all of these questions, especially if they're determinantal.
Often some equations for secant varieties are known, but the difficult question is to show when the known equations suffice. Knowing whether the aCM property holds for all secant varieties of Segre products could help all of these questions.


\subsection{Questions about primeness and degrees of minimal generators}

In general, given a parametrized (irreducible) variety $X\subset \PP^{N}$, suppose we have found candidate minimal generators  $f_{1},\ldots,f_{t}$.  Set $J :=\langle f_{1},\ldots,f_{t}\rangle$. The question then remains: What is the maximal degree of minimal defining equations of a given secant variety, and when do the known equations suffice? This question is well studied in some cases such as monomial ideals \cite{SturmfelsSullivant}, for curves \cite{Ginensky, SidmanVermeire_Secant}, and in some infinite dimensional cases, however the general question is still very open.

It may be possible to obtain upper bounds (via Castelenuovo-Mumford regularity, for example \cite{Eisenbud_tome, Eisenbud_syzygies, MaclaganSmith}), but these computations are often also difficult and the upper bounds obtained may not be sharp.  Another approach undertaken by Aschenbrenner and Hillar \cite{AshHillarFinite},  Draisma and Kutler \cite{draisma-kuttler_bounded}, Sam and Snowden \cite{SamSnowden,sam-snowden-tca, SamBounded}, and others is to investigate these questions in an infinite dimensional setting. This method has been used to determine when certain ideals are ``Noetherian up to symmetry'' and in turn, this can sometimes be used to provide a non-constructive guarantee that tensors of bounded rank are defined by equations in bounded degree not depending on the number of tensor factors. This method, however, does not typically give an explicit bound.

Suppose that we have also shown that $\V(J) = X$ as a set. Sometimes this can inferred from information provided by a numerical irreducible decomposition in  \defi{Bertini}\cite{Bertini}, which will tell the number of components in each dimension together with their geometric degrees. A symbolic degree computation can often indicate that $J$ is reduced in its top dimension if the symbolic and numerical results agree. A particularly challenging step is to determine if indeed $J  = \I(X)$, because perhaps there are lower dimensional embedded primes.

So we might attempt to show that $J$ is prime. The set-theoretic result and the fact that $I(X)$ is prime then would imply that $J = I(X)$. 
Sometimes symmetry and knowing a list of orbits can provide enough information about the primary decomposition of $J$  to rule out embedded primes (see \cite{AholtOeding}).

Showing that a given ideal is prime is one of the most basic questions in algebra.
Another way to know when the given equations generate a prime ideal that might be available is if the variety of study is arithmetically Cohen-Macaulay (aCM) (see also \cite{Trung_degree_bounds}).
\begin{defn}\label{def:aCM}
Suppose $R$ is a polynomial ring in finitely many variables, and let $I$ be an ideal of $R$. Then $R/I$ is \emph{Cohen-Macaulay} if $\depth  R/I = \codim I$. We say that $X = \V(I)$ is arithmetically Cohen-Macaulay (aCM) if $R/I$ is Cohen-Macaulay. By the Auslander-Buchsbaum formula, $R/I$ is CM if and only if the projective dimension of $R/I$ (the minimal length of a free resolution of $R/I$) is equal to the codimension of $I$.
\end{defn}
A standard argument to show primeness is the following:  If an ideal $J$ in a polynomial ring $R$ is aCM and the affine scheme it defines is generically reduced, then it is everywhere reduced, and if the zero set $\V(J)$ agrees with $X$, then $J = \I(X)$.  This technique is used in \cite{Landsberg-Weyman-Bull07} in the Segre case and  for secants of compact Hermitian symmetric spaces (CHSS) in \cite{LanWey_secant_CHSS}.

\section{Secant varieties and tensors}\label{sec:NotationEvidence}
Let $A_{1},\ldots,A_{d} $, be $\CC$-vector spaces, then the tensor product $A_{1}\o\dots\o A_{n}$ is the vector space with elements $(T_{i_{1},\ldots,i_{d}})$ considered  as hyper-matrices or tensors.

The \emph{Segre variety} of rank 1 tensors is defined by
\begin{eqnarray*} \Seg: \PP A_{1} \times \dots \times \PP A_{d} &\longrightarrow& \PP \big(A_{1}\otimes \dots \otimes A_{d} \big) \\
 ([v_{1}],\dots,[v_{d}]) &\longmapsto & [v_{1}\o \cdots \o v_{d}]
.\end{eqnarray*}
In coordinates points on the Segre variety have the form  $T_{i_{1},\ldots,i_{d}} = v_{1,i_{1}}\cdot v_{2,i_{2}}\cdots v_{d,i_{d}}$.
The $r^{th}$ \emph{secant variety}  of a variety $X \subset \PP^{N}$ is
\[
\sigma_{r}(X) := \overline{\bigcup_{x_{1},\dots,x_{r} \in X} \PP( \text{span}\{x_{1},\dots,x_{r}\})}\subset \PP ^{N}
.\]
General points of $\sigma_{r}(\Seg ( \PP A_{1} \times \cdots \times \PP A_{d}))$ have the form 
$\left[\sum_{s=1}^{r}v_{1}^{s}\o v_{2}^{s}\o\dots \o  v_{d}^{s}\right],$
or
in coordinates: $T_{i_{1},\ldots,i_{n}} = \sum_{s=1}^{r} v_{1,i_{1}}^{s}\cdot v_{2,i_{2}}^{s}\cdots v_{d,i_{d}}^{s}$.

Here are the cases we know  Conjecture~\ref{conj:aCM} to be true:
\begin{itemize}
\item \blue{Segre varieties}: $X = \Seg ( \PP A_{1} \times \cdots \times \PP A_{d}) $ are homogeneous and thus aCM.

Straightforward proof: The coordinate ring of $X$ is $\bigoplus_{d}H^{0}(\PP^{N},\O(1,\dots,1)(-d))$.  One notices (via Bott's algorithm and the Borel-Weil theorem) that the structure sheaf of $X$ has no intermediate cohomology, and thus the coordinate ring is aCM.

\item \blue{Ambient spaces}: If $k$ is such that $\sigma_{k}(X) = \PP^{N}$ - obviously $\sigma_{k}(X)$ is aCM.

\item \blue{Hypersurfaces}: $\sigma_{k}(X) \subset \PP^{N}$ is irreducible, so if $k$ is such that $\sigma_{k}(X) \subset \PP^{N}$ has codimension 1 then it is aCM.

\item \blue{Determinantal varieties} [Eagon, Eagon-Hochster]: If $X = \Seg(\PP A_{1}\times \PP A_{2})$,  then $\sigma_{k}(X)$ is a determinantal variety and aCM.

\item \blue{(Secretly) Determinantal varieties} \cite[Thm.~2.4]{CGG5_rational}: Suppose $X = \Seg(\PP A_{1}\times  \cdots \PP A_{d})$ and $Y = \Seg(\PP^{a}\times \PP^{b})$.  If $\sigma_{k}(X) = \sigma_{k}(Y)$, then $\sigma_{k}(X)$ is a determinantal variety and aCM.

\item \blue{Subspace varieties} \cite{Weyman}: $\Sub_{r_{1},\dots,r_{n}} = \{ T\in A_{1}\o \cdots \o A_{n} \mid \exists A_{i}'\subset A_{i}, \; \dim A_{i}' = r_{i},\; T \in A_{1}'\o \cdots \o A_{n}'   \}$.

\item \blue{GSS Conjecture for 4 factors} \cite{Landsberg-Weyman-Bull07}: $\sigma_{2}(\Seg(\PP^{n_{1}}\times \PP^{n_{2}} \times \PP^{n_{3}}\times \PP^{n_{4}}  ))$ is aCM for all $n_{i}\geq 1$.

\item \blue{Matrix Pencils}\cite{Landsberg-Weyman-Bull07}: $\sigma_{2}(\PP^{n_{1}}\times \PP^{n_{2}}  \times \PP^{n_{3}} )$ is aCM for all $n_{i} \geq 1$.
\item \blue{3rd secant of 3 factors} \cite{Landsberg-Weyman-Bull07}: $\sigma_{3}(\Seg(\PP^{n_{1}}\times \PP^{n_{2}} \times \PP^{n_{3}}  ))$  for all $n_{1}, n_{2},n_{3} \geq 1$.

\item \blue{A Strassen case}\cite{Landsberg-Weyman-Bull07}: $\sigma_{k}(\PP^{1}\times \PP^{n_{2}}  \times \PP^{n_{3}} )$ is aCM for all $k, n_{2},n_{3} \geq 1$.

\item \blue{A case with defective dimension}:  $\sigma_{3}(\Seg(\PP^{1}\times \PP^{1}\times \PP^{1}\times \PP^{1} ))$  is classically known to be a complete intersection (CI) of 2 quartics (choose any two of the three determinants of $4\times 4$ flattenings). It is aCM since CI implies aCM. It is one of the first non-matrix examples of a defective secant variety of a Segre product (it has codimension 2 and not 1 as expected).
\item \blue{New case} \cite{OedingSam}:  $\sigma_{5}(\Seg(\PP^{1})^{\times 5})$ is a complete intersection of two equations, one of degree 6 and one of degree 16, and is aCM since CI implies aCM.
\item \blue{Numerical results} \cite{DaleoHauenstein}: Using a numerical Hilbert function computation Daleo and Hauenstein were able to show that $\sigma_{4}(\PP^{2}\times \PP^{2}\times \PP^{3})$ is aCM with high probability. Later we will report a symbolic computation that removes the ``high probability'' qualifier.
\item \blue{Local results} \cite{MichalekOedingZwiernik}: $\sigma_{2}(\PP^{n_{1}}\times \dots \times \PP^{n_{m}})$ is covered by open normal toric varieties. In particular $\sigma_{2}(\PP^{n_{1}}\times \dots \times \PP^{n_{m}})$ is locally Cohen-Macaulay. 
\item\blue{Partially symmetric case}\cite[Lemma~5.5]{CEO}: For $r\leq 5$, $\sigma_{r}(\PP^{2}\times \nu_{2}\PP^{n})$ is defined by ``kappa-equations'' and moreover it is arithmetically Gorenstein (in particular it is aCM).
\end{itemize}

Since Veronese varieties are a close cousin to Segre varieties, we also collect the following evidence for secants of Veronese varieties. Geramita made the following conjecture which is a symmetric version of Conj.~\ref{conj:aCM}:
\begin{conjecture}[{\cite[p55]{Geramita_Lectures}}]
The $s$-secant variety of the degree  $d$ Veronese re-embedding of projective space $\sigma_{s}(\nu_{d}\PP^{n})$ is aCM for all $s,d,n$.
\end{conjecture}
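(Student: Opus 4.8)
The plan is to transport the Landsberg--Weyman strategy, in the refined form of Theorem~\ref{thm:main}, from the multilinear to the symmetric setting, and to run it as a multi-step induction on $n$. First one assembles the base cases: for $s=1$ the variety is the Veronese itself, which is homogeneous and hence aCM; whenever $\sigma_{s}(\nu_{d}\PP^{n})$ fills its ambient space or has codimension one it is trivially aCM; and $\sigma_{s}(\nu_{2}\PP^{n})$ is a (symmetric) catalecticant determinantal variety, hence aCM by classical results. As with Theorem~\ref{thm:main}, where the case $n_{i}=r$ is a hypothesis rather than an output, the induction must be fed the ``critical'' Veronese secants --- those with $n$ small relative to $s$, the extreme being $\sigma_{s}(\nu_{d}\PP^{s-1})$ --- and it is precisely these that are genuinely open. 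One cannot simply import the Segre conjecture either: $\sigma_{s}(\nu_{d}\PP^{n})$ is cut out on $\sigma_{s}(\Seg((\PP^{n})^{\times d}))$ by the linear equations defining the symmetric subspace $S^{d}\CC^{n+1}$, and linear sections need not preserve the aCM property.

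The core of the argument is a symmetric analogue of LW-lifting. Here $G=\GL(A)$ acts on $S^{d}A$, and the role played by subspace varieties in Appendices~\ref{sec:orbits}--\ref{sec:cone} is taken over by the \emph{symmetric subspace variety}
\[
\Sub^{\mathrm{sym}}_{r}(S^{d}A)\;=\;\{\,f\in S^{d}A\;:\;\exists\,A'\subseteq A,\ \dim A'=r,\ f\in S^{d}A'\,\},
\]
whose coordinate ring carries a known $\GL(A)$-equivariant minimal free resolution. When $n+1>s$ one realizes $\sigma_{s}(\nu_{d}\PP^{n})$ as the image of a vector bundle over the Grassmannian $\Gr(s,A)$, computes the cohomology of the relevant bundles via Bott's algorithm and the Borel--Weil theorem (as in~\ref{sec:Bott}), and runs the iterated mapping cone of Appendix~\ref{sec:cone} with a single $\GL$ factor in place of the product $\prod_{i}\GL(A_{i})$. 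This should produce a (possibly non-minimal) $\GL(A)$-equivariant resolution of $\CC[\sigma_{s}(\nu_{d}\PP^{n})]$ lifted from the critical case, together with an explicit bound on the widths of the Schur functors $S_{\pi}A$ that appear. If that bound keeps every such $\pi$ inside the box dictated by the codimension --- the one-factor counterpart of the $(\w{r_{j}}-r_{j})$-smallness hypothesis of Theorem~\ref{thm:main} --- then the resolution has length equal to the codimension, so $\sigma_{s}(\nu_{d}\PP^{n})$ is aCM, and the aCM property propagates from the critical cases to all $n$, and then (with a separate stabilization argument in $d$) to all $d$.

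The main obstacle is twofold. On the technical side, the Veronese carries only one copy of $\GL$, so there is no room to ``spread'' a wide partition across several factors as the Segre argument does; the small-resolution estimate must therefore be proved directly, and Bott's algorithm for $S^{d}$ of a single space is combinatorially touchier than for the weight $(1,\dots,1)$ on a product --- indeed this is exactly why we expect an analogue of the ``resolution by small partitions'' hypothesis to reappear here, and it must be verified for the critical inputs. On the mathematical side, the critical cases $\sigma_{s}(\nu_{d}\PP^{s-1})$, together with the nearby defective members of the Alexander--Hirschowitz list, are where the difficulty concentrates: their minimal free resolutions encode delicate features of Artinian Gorenstein quotients and apolarity, with no uniform description presently available. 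I expect LW-lifting to dispose of the ``$n$ large'' regime cleanly, reducing Geramita's conjecture to these finitely-shaped but genuinely hard critical computations, which would then have to be settled by representation-theoretic cohomology arguments or by explicit machine-assisted resolutions of the kind carried out in Section~\ref{newACM} for rank-$4$ tensors of format $3\times3\times4$.
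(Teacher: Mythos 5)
The statement you were handed is Geramita's conjecture, which the paper does not prove --- it is recorded as open, with Kanev's theorem (aCM whenever $d=2$, $n=1$, or $s\leq 2$) cited as the state of the art. So there is no proof in the paper to compare against; your proposal, honestly labeled ``Towards a proof,'' correctly stops short and ends by reducing to cases it acknowledges as open. As a research program it is a faithful symmetric analogue of the paper's LW-lifting machinery: partially desingularize over $\Gr(s,A)$ with fiber $\sigma_{s}(\nu_{d}\PP^{s-1})$, run the iterated mapping cone of Appendix~\ref{sec:cone}, and hope for a small-partitions bound. Your reduction to the critical family $\sigma_{s}(\nu_{d}\PP^{s-1})$ is exactly the symmetric counterpart of the Segre-case hypothesis that $\sigma_{r}(\Seg((\PP^{r-1})^{\times n}))$ be aCM with a resolution by small partitions, and you rightly note that neither the Segre conjecture nor a linear-section argument would dispose of it.

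One concrete obstruction deserves a sharper statement than ``combinatorially touchier.'' In the Segre case the bundle $\eta=\R_{1}\otimes\cdots\otimes\R_{n}$ is a tensor product of tautological subbundles, so $\bigwedge^{k}\eta$ decomposes into irreducibles by iterating Cauchy's formula; this is precisely what makes the cohomology computations behind Lemma~\ref{lem:MCMmodule} tractable. In the symmetric case $\eta=S^{d}\R$, and $\bigwedge^{k}(S^{d}\R)$ is a genuine plethysm with no closed-form decomposition for general $d$ and $k$. Consequently your assertion that the symmetric subspace variety $\Sub^{\mathrm{sym}}_{s}(S^{d}A)$ carries a ``known'' $\GL(A)$-equivariant minimal free resolution is really only established for $d=2$ (Lascoux, J{\'o}zefiak--Pragacz--Weyman); for $d\geq 3$ even the subspace-variety step --- let alone the lift of the secant --- would require new plethysm input. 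This is a structural gap, not a detail to be checked, and any serious attempt on Geramita's conjecture via this route must confront it before the mapping cone can even be set up. Separately, note the critical family is two-dimensional in $(s,d)$, so the ``stabilization in $d$'' you gesture at is itself a missing argument, not a formality.
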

As far as I know, the state of the art on this aCM question is  \cite[Thm. 1.56]{IarrobinoKanev_text}, which also gave the dimension, degree, and  singular locus.
\begin{theorem}[{\cite{Kanev_catalecticant}}] $\sigma_{s}(\nu_{d}\PP^{n})$ is aCM if either $d=2$, or $n=1$ or $s\leq 2$.
\end{theorem}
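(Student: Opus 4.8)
The plan is to dispatch the three sufficient conditions $d=2$, $n=1$, and $s\le 2$ separately, since they are governed by genuinely different structures. In each case, after discarding the trivial situation in which $\sigma_{s}(\nu_{d}\PP^{n})$ already fills the ambient projective space (and is then aCM for free), the goal is to identify the secant variety with one whose Cohen-Macaulayness is either classical or accessible through Weyman's geometric technique, the engine used throughout this paper.

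\textbf{The cases $d=2$ and $n=1$ (determinantal geometry).} When $d=2$, the Veronese $\nu_{2}\PP^{n}\subset\PP\Sym^{2}\CC^{n+1}$ is the variety of rank-one symmetric $(n+1)\times(n+1)$ matrices, so $\sigma_{s}(\nu_{2}\PP^{n})$ is the symmetric determinantal variety $\{\,M=M^{\mathsf T}\mid \operatorname{rank}M\le s\,\}$; Cohen-Macaulayness of loci of bounded-rank symmetric matrices is classical, and one may quote the $\GL_{n+1}$-equivariant minimal free resolution of J\'ozefiak and of J\'ozefiak-Pragacz-Weyman, whose length equals $\codim$. When $n=1$, $\nu_{d}\PP^{1}$ is the rational normal curve of degree $d$; assuming $2s\le d$ (otherwise $\sigma_{s}(\nu_{d}\PP^{1})=\PP^{d}$), this secant variety is cut out scheme-theoretically by the maximal minors of the $(s+1)\times(d-s+1)$ catalecticant (Hankel) matrix of linear forms. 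A Hankel matrix is the prototypical $1$-generic matrix in Eisenbud's sense, so its ideal of maximal minors has the expected codimension and is resolved by the Eagon-Northcott complex, of length equal to that codimension; hence $R/I$ is Cohen-Macaulay exactly as for the generic determinantal variety (Eagon-Hochster).

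\textbf{The case $s\le 2$.} For $s=1$ there is nothing to prove: $\sigma_{1}(\nu_{d}\PP^{n})=\nu_{d}\PP^{n}$ is a homogeneous variety, the closed $\SL_{n+1}$-orbit in $\PP\Sym^{d}\CC^{n+1}$, and homogeneous varieties are aCM by the no-intermediate-cohomology argument recalled for Segre varieties in Section~\ref{sec:NotationEvidence}. For $s=2$ the plan is to run Weyman's geometric technique on a desingularization. The variety $\sigma_{2}(\nu_{d}\PP^{n})$ is the image of the total space $Z$ of a rank-two bundle $\mathcal{S}\subset B\times\Sym^{d}\CC^{n+1}$, where $B=\mathrm{Hilb}^{2}(\PP^{n})$ is smooth (and is itself a $\PP^{2}$-bundle over $\Gr(2,n+1)$, so Bott's algorithm applies up the tower), with $\mathcal{S}_{b}$ the span of the length-two subscheme $b$; the projection $\pi\colon Z\to\sigma_{2}(\nu_{d}\PP^{n})$ is birational, because for $d\ge 3$ the variety $\sigma_{2}(\nu_{d}\PP^{n})$ is generically identifiable (the two summands of a general element are pinned down by low-degree apolarity). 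Computing the higher direct images $R^{i}\pi_{*}$ of the terms of the Koszul complex resolving $\O_{Z}$, by propagating Bott's algorithm and the Borel-Weil theorem along $Z\to B\to\Gr(2,n+1)$ --- the toolkit of Appendices~\ref{sec:cone} and~\ref{sec:Bott} --- one obtains a complex; if the relevant intermediate cohomology vanishes, this complex resolves $\CC[\sigma_{2}(\nu_{d}\PP^{n})]$ in length $\codim$, which gives aCM and simultaneously recovers the presentation of $\I(\sigma_{2}(\nu_{d}\PP^{n}))$ by $3\times 3$ minors of catalecticants.

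\textbf{Where the difficulty lies.} The cases $d=2$ and $n=1$ are soft: they land on determinantal varieties of generic or $1$-generic matrices, for which aCM is already on the books. The real work is $s=2$ with $n\ge 2$, where the catalecticant matrix whose $3\times 3$ minors cut out $\sigma_{2}(\nu_{d}\PP^{n})$ fails to be $1$-generic, so no determinantal shortcut applies and the cohomology computation for the desingularizing bundle must be pushed through honestly. Establishing the needed vanishing of the intermediate $R^{i}\pi_{*}$ along the Koszul complex --- equivalently, since $\sigma_{2}(\nu_{d}\PP^{n})$ is normal, that its structure sheaf has no intermediate cohomology --- is the crux, and is precisely where Bott's algorithm and Borel-Weil must do the heavy lifting. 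A conceivable alternative for $s=2$ is induction on $n$ by intersecting with a general hyperplane down to the $n=1$ case, but controlling the ideal of the section and checking that aCM descends is itself delicate, so I would expect the geometric-technique route to be cleanest.
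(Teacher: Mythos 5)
The paper does not prove this statement; it cites it to Kanev and to the Iarrobino--Kanev book, so there is no internal proof to match against. That said, your dispatch of three of the four sub-cases is sound: $d=2$ lands on symmetric determinantal varieties (Kutz, J\'ozefiak--Pragacz--Weyman), $n=1$ lands on maximal minors of a Hankel catalecticant which is $1$-generic so Eagon--Northcott resolves in the expected codimension, and $s=1$ is a homogeneous variety. Those are complete and correct.

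The genuine gap is $s=2$ with $n\ge 2$, and you have correctly located it but not closed it. Two issues. First, you explicitly leave the decisive step (``if the relevant intermediate cohomology vanishes'') unverified, and that vanishing \emph{is} the theorem in this range; a plan is not a proof. Second, and more structurally, your base $B=\mathrm{Hilb}^{2}(\PP^{n})$ is not a homogeneous space $G/P$, so the Bott/Borel--Weil machinery of Appendix~\ref{sec:Bott} does not apply to it directly; you would have to run a relative Bott computation up the $\PP^{2}$-bundle $\mathrm{Hilb}^{2}(\PP^{n})\to\Gr(2,n+1)$ and reassemble via Leray, which is more work than is actually needed. A cleaner route, and one that parallels the LW-lifting machinery of this very paper, is to desingularize over $\Gr(2,n+1)$ itself: inside the trivial bundle $\u{\Sym^{d}\CC^{n+1}}$ take the subvariety whose fiber over a $2$-plane $W$ is the affine cone over $\sigma_{2}(\nu_{d}\PP W)\subset\Sym^{d}W$. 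This exhibits $\sigma_{2}(\nu_{d}\PP^{n})$ as $\overline{\GL_{n+1}\cdot\sigma_{2}(\nu_{d}\PP^{1})}$ sitting inside a symmetric subspace variety, with fiber already aCM by your own $n=1$ case. What remains is the symmetric analogue of Lemma~\ref{lem:MCMmodule} and the mapping-cone argument of Appendix~\ref{sec:cone}, with the ``small partitions'' hypothesis checked against the Eagon--Northcott resolution of $\sigma_{2}(\nu_{d}\PP^{1})$; that check is the real content, and it is precisely what your write-up omits.
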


 Landsberg and Weyman applied a partial desingularization together with a mapping cone argument to show the following.

\begin{theorem} [{\cite{Landsberg-Weyman-Bull07}}]

Suppose $X:=\sigma_{r}(\Seg( {\PP^{r-1}}^{\times d}  ))$ is aCM, with ``a resolution by small partitions.'' If $n_{i}\geq r-1$ for all $1\leq i\leq d$, then
 $\sigma_{r}(\Seg( \PP^{n_{1}}\times \cdots \times \PP^{n_{d}} ))$ is aCM and its ideal is generated by those inherited from $X$ and the $(r+1)\times(r+1)$-minors of flattenings.
\end{theorem}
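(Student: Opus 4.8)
The plan is to deduce this from Theorem~\ref{thm:main} by specializing all the numerical data. Fix vector spaces $A_{j}$ with $\dim A_{j}=n_{j}+1$ and subspaces $A_{j}'\subseteq A_{j}$ with $\dim A_{j}'=r$, and set $G=\prod_{j=1}^{d}\GL(A_{j})$, $G'=\prod_{j=1}^{d}\GL(A_{j}')$, and $Y=X=\sigma_{r}(\Seg(\PP A_{1}'\times\dots\times\PP A_{d}'))\subseteq \PP(A_{1}'\o\dots\o A_{d}')$. The first step is to establish the orbit-closure identification
\[
\sigma_{r}(\Seg(\PP A_{1}\times\dots\times \PP A_{d})) \;=\; \overline{G.Y},
\]
which holds because a general rank-$r$ tensor $\sum_{s=1}^{r}v_{1}^{s}\o\dots\o v_{d}^{s}$ already has its factor vectors $v_{j}^{1},\dots,v_{j}^{r}$ spanning an (at most, generically exactly) $r$-dimensional subspace of $A_{j}$, which some element of $G$ carries onto $A_{j}'$; the reverse containment is clear since $\sigma_{r}(\Seg(\PP A_{1}\times\dots))$ is $G$-invariant and contains $Y$. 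This is the content of Appendix~\ref{sec:orbits}, where one also records that $\overline{G.Y}\subseteq \Sub_{r,\dots,r}=\overline{G.(A_{1}'\o\dots\o A_{d}')}$, the subspace variety that will provide the partial desingularization.

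Second, I would check that the hypotheses of Theorem~\ref{thm:main} are met. Here $r_{j}=r$ and $a_{j}=\dim A_{j}=n_{j}+1$, so the assumption $n_{j}\geq r-1$ says exactly $r_{j}\leq a_{j}$ for every $j$; the indices $j$ with $0<r_{j}<a_{j}$ are those with $n_{j}\geq r$, and for these the required $(\w{r_{j}}-r_{j})$-smallness of the resolution of $Y=X$ is precisely what ``resolution by small partitions'' means, since $\w{r_{j}}-r_{j}=r^{d-1}-r$. Theorem~\ref{thm:main} then applies and gives that $\overline{G.Y}=\sigma_{r}(\Seg(\PP A_{1}\times\dots\times\PP A_{d}))$ is aCM, together with an explicit, generally non-minimal, $(s_{j})$-small $G$-equivariant free resolution of its coordinate ring.

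Third, the statement about generators is read off from that resolution, which Appendix~\ref{sec:cone} produces as an iterated mapping cone built from the pushforward of the structure sheaf along the Grassmann-bundle desingularization of $\Sub_{r,\dots,r}$ glued to the lifted resolution of $\CC[X]$. In homological degree one this complex is supported on exactly two pieces: the generators of $I(\Sub_{r,\dots,r})$, which are the $(r+1)\times(r+1)$ minors of the flattenings, and the $G$-sweep of the minimal generators of $I(X)$, the ``inherited'' equations. Hence these two families generate $I(\sigma_{r}(\Seg(\PP A_{1}\times\dots\times\PP A_{d})))$.

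The main obstacle is Theorem~\ref{thm:main} itself; everything above is bookkeeping once it is available. Its proof, carried out in the appendices, runs Weyman's geometric technique on the desingularization of $\overline{G.Y}$ by a vector bundle over $\Gr(r,A_{1})\times\dots\times\Gr(r,A_{d})$, computes the cohomology of the relevant homogeneous bundles via the Borel--Weil theorem and Bott's algorithm (Appendix~\ref{sec:Bott}), and then shows through the iterated mapping cone that no intermediate cohomology survives, so that the resulting resolution has length equal to $\codim$ and hence $\depth=\codim$. The small-partition hypothesis is exactly what keeps this under control: a Schur module $S_{\pi^{j}}A_{j}'$ occurring in the resolution of $\CC[X]$ lifts, after twisting by line bundles on the Grassmann-bundle factors, to bundles whose higher cohomology vanishes only when the first part of $\pi^{j}$ does not exceed $\w{r_{j}}-r_{j}$; without this bound the mapping cone can acquire homology in the wrong homological degrees and destroy the Cohen--Macaulay property. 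Tracking how the first parts of the $\pi^{j}$ grow under lifting, and checking that the $(s_{j})$-smallness of the output is compatible with iterating the construction one factor at a time, is the delicate part of the argument.
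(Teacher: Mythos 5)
Your proposal is correct and follows essentially the same route as the paper: identify $\sigma_{r}(\Seg(\PP A_{1}\times\dots\times\PP A_{d}))$ as the orbit closure $\overline{G.Y}$ of $Y=\sigma_{r}(\Seg({\PP^{r-1}}^{\times d}))$ sitting inside $\Sub_{r,\dots,r}$, verify that $a_{j}=n_{j}+1\geq r=r_{j}$ so the hypotheses of Theorem~\ref{thm:main} reduce to the given ``small partitions'' condition, and then invoke the iterated mapping-cone construction (Appendices~\ref{sec:weyman}--\ref{sec:cone}) for the aCM conclusion and to read off the generators. The only cosmetic difference is that the paper records the generation statement as a free-standing proposition in Appendix~\ref{sec:orbits} (a direct lift of Landsberg--Weyman's Prop.~5.1), whereas you extract it from the degree-one piece $F_{1,0}\oplus F_{0,1}$ of the mapping cone; both give the same generating set, and your version makes it explicit that the two summands are precisely the inherited equations and the minors of flattenings.
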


New cases found by Landsberg and Weyman using this result (we call it and its adaptations \emph{LW-lifting}) are the following:
\begin{itemize}
\item Direct computation: $\sigma_{2}(\Seg(\PP^{1} \times \PP^{1} \times \PP^{1} \times \PP^{1} ))$ is aCM with small partitions, and its ideal is defined by $3 \times 3$ minors of flattenings. 

\item LW-lifting implies that $\sigma_{2}(\Seg(\PP^{n_{1}}\times \PP^{n_{2}} \times \PP^{n_{3}}\times \PP^{n_{4}}  ))$ is aCM, and its ideal is defined by $3 \times 3$ minors of flattenings.

\item Direct computation shows that $\sigma_{3}(\Seg(\PP^{2}\times \PP^{2} \times \PP^{2}  ))$ is aCM with small partitions, and its ideal is defined by (Strassen's) 27 quartic equations. 

\item LW-lifting implies that 
$\sigma_{3}(\Seg(\PP^{n_{1}}\times \PP^{n_{2}} \times \PP^{n_{3}}  ))$ is aCM and ideal defined by quartic equations: those inherited from Strassen's and the $4\times 4$ minors of flattenings.
\end{itemize}

\section{Applications of LW-lifting and new examples of aCM secant varieties}\label{newACM}
Let $R = \CC[A\o B \o C]$ and $G = \GL(A)\times \GL(B) \times \GL(C)$.   
Galetto's \textsf{HighestWeights} package in Macaulay2,  \cite{HighestWeights} determines the $G$-module structure from the maps in a resolution of a $G$-invariant $R$-module (provided one can compute the resolution in the first place). Using this package, we can obtain the $G$-equivariant versions of all the resolutions we can compute, for example the case of $\sigma_{4}(\PP^{2}\times \PP^{2}\times \PP^{3})$ above. This symbolic tool should be useful for finding more examples of equivariant resolutions for secant varieties, which can inform further work on Conjecture \ref{conj:aCM}.
 
The Betti table and the $\GL(3)\times \GL(3)\times \GL(4)$-equivariant description (found using \textsf{HighestWeights} in Macaulay2) are as follows:
(We only record the Young tableau that index the $G$-modules in the resolution. The number of boxes in each factor captures the grading.)
\[
{\small
\ytableausetup{centertableaux,boxsize=0.5em}
      \hspace{1em}
\begin{array}{ccc}&\CC  \\
&\uparrow \\
 \ydiagram{2,2,2} \o \ydiagram{2,2,2} \o \ydiagram{3,1,1,1}
& \bigoplus &
\ydiagram{3,3,3} \o \ydiagram{3,3,3} \o \ydiagram{3,3,3} \\
&\uparrow \\
\ydiagram{4,3,3} \o \ydiagram{4,3,3} \o \ydiagram{3,3,2,2}
& \bigoplus &
\ydiagram{4,3,3} \o \ydiagram{4,3,3} \o \ydiagram{3,3,3,1}
\\
&\uparrow \\
\ydiagram{4,4,3} \o \ydiagram{4,4,3} \o \ydiagram{3,3,3,2}
& \bigoplus &
\begin{pmatrix}
\ydiagram{4,4,3} \o \ydiagram{5,3,3}\\  \bigoplus \\ \ydiagram{5,3,3} \o \ydiagram{4,4,3} 
\end{pmatrix}\o \ydiagram{3,3,3,2}
\\
&\uparrow \\
\ydiagram{4,4,4} \o\ydiagram{4,4,4} \o \ydiagram{3,3,3,3}
& \bigoplus &
\ydiagram{5,4,3} \o\ydiagram{5,4,3} \o \ydiagram{3,3,3,3}
\\
&\uparrow \\
&0
\end{array}}
\fbox{$\small \begin{matrix}
      &0&1&2&3&4\\\text{total:}&1&30&144&180&65\\\text{0:}&1&\text{.}&\text{.}&\text{.}&\text{.}\\\text{1:}&\text{.}&\text{.}&\text{.}&\text{.}&\text{.}\\\text{2:}&\text{.}&\text{.}&\text{.}&\text{.}&\text{.}\\\text{3:}&\text{.}&\text{.}&\text{.}&\text{.}&\text{.}\\\text{4:}&\text{.}&\text{.}&\text{.}&\text{.}&\text{.}\\\text{5:}&\text{.}&10&\text{.}&\text{.}&\text{.}\\\text{6:}&\text{.}&\text{.}&\text{.}&\text{.}&\text{.}\\\text{7:}&\text{.}&\text{.}&\text{.}&\text{.}&\text{.}\\\text{8:}&\text{.}&20&144&180&65\\\end{matrix}
$}
 \]

Note that $\sigma_{4}(\PP^{2}\times \PP^{2}\times \PP^{3})$ has codimension 4, and we have a length 4 resolution so it is aCM. This also confirms the Daleo-Hauenstein result unconditionally.
After checking that the ideal we started with is generically reduced, we obtain the following.
\begin{theorem}
The secant variety $\sigma_{4}(\Seg(\PP^{2}\times \PP^{2}\times \PP^{3}))$ is arithmetically Cohen-Macaulay.
Its prime ideal is minimally generated by the 10 degree 6 Landsberg-Manivel equations, and the 20 degree 9 equations inherited from Strassen's equation.
\end{theorem}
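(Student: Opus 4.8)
The plan is to combine the explicit minimal free resolution of $\CC[X]$ displayed above with the standard ``aCM plus generically reduced $\Rightarrow$ prime'' argument recalled after Definition~\ref{def:aCM}. Throughout, write $A=\CC^{3}$, $B=\CC^{3}$, $C=\CC^{4}$, $R=\CC[A\o B\o C]$ (a polynomial ring in $36$ variables), $X=\sigma_{4}(\Seg(\PP A\times\PP B\times\PP C))\subset\PP^{35}$, and let $J\subset R$ be the ideal generated by the $10$ degree-$6$ Landsberg--Manivel forms together with the $20$ degree-$9$ forms inherited from Strassen's equation. Note that for the format $3\times 3\times 4$ every flattening has a side of size at most $4$, so no $(r+1)\times(r+1)$-minors of flattenings appear, and $J$ is genuinely generated by these $30$ forms.

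First I would settle the numerology. Since $\dim\Seg(\PP A\times\PP B\times\PP C)=7$, the expected dimension of $X$ is $4\cdot 7+3=31$; this $\sigma_{4}$ is non-defective, so $\dim X=31$ and $\codim X=4$. From the resolution above --- computed in \textsf{Macaulay2} and promoted to a $G=\GL(A)\times\GL(B)\times\GL(C)$-equivariant resolution with Galetto's \textsf{HighestWeights} package --- one reads off that it has length $4$ and that $\beta_{1,6}=10$, $\beta_{1,9}=20$, with every higher $\beta_{i}$ concentrated in a single degree. The first syzygy module is $\big(S_{(2,2,2)}A\o S_{(2,2,2)}B\o S_{(3,1,1,1)}C\big)\oplus\big(S_{(3,3,3)}A\o S_{(3,3,3)}B\o S_{(3,3,3)}C\big)$, of degrees $6$ and $9$; a Weyl-dimension count gives $1\cdot 1\cdot 10=10$ and $1\cdot 1\cdot 20=20$, matching the $10$ Landsberg--Manivel sextics and the $20$ Strassen-inherited nonics. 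Since the resolution is minimal, these $30$ forms are a minimal generating set of $J$.

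The heart of the matter is upgrading ``$R/J$ has a length-$4$ resolution'' to ``$X$ is aCM and $\I(X)=J$.'' I would proceed in the usual order. \textbf{(i)} Identify $\V(J)$: clearly $X\subseteq\V(J)$ since these forms vanish on the secant variety, and a numerical irreducible decomposition in \textsf{Bertini} (a single component of dimension $31$) together with a symbolic degree/Hilbert-polynomial computation for $R/J$ agreeing with the numerical degree shows $\V(J)$ is irreducible of dimension $31$; hence $\V(J)=X$ and $\sqrt{J}=\I(X)$, so $\codim J=4$. \textbf{(ii)} The resolution then realizes $\operatorname{pd}(R/J)=4=\codim J$, so by the Auslander--Buchsbaum characterization in Definition~\ref{def:aCM}, $R/J$ is Cohen--Macaulay, hence unmixed, so $J$ is $\I(X)$-primary. \textbf{(iii)} A symbolic check that $J$ is generically reduced along $X$ --- for instance, that the Jacobian of the $30$ generators attains rank $4$ at a sufficiently general point of $X$ --- shows $(R/J)_{\I(X)}$ is reduced, i.e.\ $J R_{\I(X)}=\I(X)R_{\I(X)}$; combined with $\I(X)$-primariness this forces $J=\I(X)$. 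Since $R/\I(X)=R/J$ is Cohen--Macaulay, $X$ is aCM, its prime ideal is minimally generated by the $10$ sextics and $20$ nonics identified above, and step (iii) is precisely the computation that removes the ``high probability'' from the Daleo--Hauenstein result.

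I expect the obstacle to be computational rather than conceptual: producing the minimal free resolution of an ideal with $30$ generators of degrees $6$ and $9$ in a $36$-variable polynomial ring, and carrying out the generic-reducedness and degree verifications unconditionally (over $\QQ$, or by lifting a characteristic-$p$ computation once one checks that no Betti numbers drop). Exploiting the $G$-action to work isotypic component by isotypic component, or reducing modulo a well-chosen prime, is how I would keep this feasible.
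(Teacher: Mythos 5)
Your proposal follows essentially the same route as the paper: compute the minimal free resolution of the candidate ideal $J$ symbolically (Macaulay2 plus \textsf{HighestWeights} for the $G$-module structure), observe that the resolution has length $4=\codim X$, and combine this with a generic-reducedness check to conclude $J=\I(X)$ and that $X$ is aCM. You spell out the intermediate steps (set-theoretic identification $\V(J)=X$, Cohen--Macaulay $\Rightarrow$ unmixed $\Rightarrow$ $\I(X)$-primary, localization) more carefully than the paper's terse statement, but the underlying argument is the same.
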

We note that the final modules in the resolution of $\sigma_{4}(\PP^{2}\times \PP^{2}\times\PP^{3})$ have ``small partitions,'' so we apply  our adaptation of LW-lifting (Thm.~\ref{thm:main}), to obtain:
\begin{theorem}
The secant variety $\sigma_{4}(\Seg(\PP^{2}\times \PP^{2}\times \PP^{n}))$ is arithmetically Cohen-Macaulay for all $n\geq 3$.
Its prime ideal is minimally generated by $5\times 5$ minors of flattenings, the degree 6 Landsberg-Manivel equations, and the degree 9 Strassen equations.
\end{theorem}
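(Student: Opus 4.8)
The plan is to apply our adapted LW-lifting theorem, Theorem~\ref{thm:main}, a single time, taking as base case $Y:=\sigma_{4}(\Seg(\PP^{2}\times\PP^{2}\times\PP^{3}))$, whose $\GL(3)\times\GL(3)\times\GL(4)$-equivariant minimal free resolution is the one displayed above. First I would record the orbit picture. Put $A_{1}=A_{2}=\CC^{3}$, $A_{3}=\CC^{n+1}$, $G=\GL(A_{1})\times\GL(A_{2})\times\GL(A_{3})$, and fix a four-dimensional subspace $A_{3}'\subseteq A_{3}$. Any tensor of rank at most $4$ of format $3\times3\times(n+1)$ is a sum of four rank-one terms, so its third flattening (an $(n+1)\times 9$ matrix) has rank at most $4$; hence $\sigma_{4}(\Seg(\PP^{2}\times\PP^{2}\times\PP^{n}))\subseteq\Sub_{3,3,4}(A_{1}\o A_{2}\o A_{3})$, while a general rank-$4$ tensor has multilinear rank exactly $(3,3,4)$. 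By the orbit descriptions of Appendix~\ref{sec:orbits} this identifies $\sigma_{4}(\Seg(\PP^{2}\times\PP^{2}\times\PP^{n}))$ with $\overline{G.Y}$, where $Y\subseteq A_{1}\o A_{2}\o A_{3}'$: every rank-$4$ tensor is $G$-conjugate into the slice $A_{1}\o A_{2}\o A_{3}'$, and $G$ preserves $\sigma_{4}$. In the notation of Theorem~\ref{thm:main} we thus have $(a_{1},a_{2},a_{3})=(3,3,n+1)$ and $(r_{1},r_{2},r_{3})=(3,3,4)$; the case $n=3$ is exactly the preceding theorem, so I would assume $n\ge 4$.

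Next I would check the hypothesis of Theorem~\ref{thm:main}. The only index $j$ with $0<r_{j}<a_{j}$ is $j=3$, and there $\w{r_{3}}-r_{3}=r_{1}r_{2}-r_{3}=9-4=5$; so what is required is that the displayed resolution of $\CC[Y]$ be $(5)$-small in the third factor, i.e.\ that every Young diagram occupying the third slot have first row of length at most $5$. Reading off the display, the third-slot diagrams that occur are $(3,1,1,1)$, $(3,3,3)$, $(3,3,2,2)$, $(3,3,3,1)$, $(3,3,3,2)$ and $(3,3,3,3)$, all with first part $3$, so the condition holds with room to spare. Theorem~\ref{thm:main} then yields that $\sigma_{4}(\Seg(\PP^{2}\times\PP^{2}\times\PP^{n}))$ is aCM, and together with the base case this covers all $n\ge 3$.

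For the ideal I would read the degree-one syzygies off the iterated mapping cone of Appendix~\ref{sec:cone}, which resolves $\CC[\overline{G.Y}]$: its defining ideal is generated by the generators of $I(\Sub_{3,3,4})$ together with the lifts of the generators of $I(Y)$. By Weyman's resolution of subspace varieties the former are the $5\times 5$ minors of the third flattening (the $5\times 5$ minors of the other two flattenings vanish identically, those flattenings having only three rows); and by the preceding theorem $I(Y)$ is generated by the degree-$6$ Landsberg--Manivel equations and the degree-$9$ Strassen equations, so the lifted part consists of the Landsberg--Manivel equations in degree $6$ and the Strassen equations in degree $9$ of the $3\times3\times(n+1)$ format. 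This ideal is automatically the prime ideal of the secant variety, since the complex resolves its coordinate ring. For minimality I would restrict everything to the slice $W=A_{1}\o A_{2}\o A_{3}'$: there the $5\times5$ minors of the third flattening restrict to $0$, while the Landsberg--Manivel and Strassen equations restrict to those of $\sigma_{4}(\Seg(\PP^{2}\times\PP^{2}\times\PP^{3}))$, which are minimal generators by the preceding theorem; as the three families sit in degrees $5$, $6$, $9$, any $R$-linear dependence expressing one of them through the others would, after restriction to $W$, contradict that minimality.

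The substantive input is the explicit equivariant resolution of $\CC[\sigma_{4}(\Seg(\PP^{2}\times\PP^{2}\times\PP^{3}))]$ recorded above, and in particular the fact that its last modules involve only diagrams with short third rows, which is precisely what makes the lift legal; the ``hard part'' lives in that computation rather than in this deduction. Within the deduction itself, the one point that needs care is that the iterated mapping cone is in general non-minimal, so a priori the claimed generating set could be proper cancellations away from minimal; the degree-and-restriction argument above is exactly what rules out any such cancellation.
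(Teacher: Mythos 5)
Your proposal is correct and follows the same route the paper takes: identify $\sigma_{4}(\Seg(\PP^{2}\times\PP^{2}\times\PP^{n}))$ with $\overline{G.Y}$ for $Y=\sigma_{4}(\Seg(\PP^{2}\times\PP^{2}\times\PP^{3}))$, verify that the displayed equivariant resolution is $(\w{r_{3}}-r_{3})=(5)$-small in the only factor where $r_{j}<a_{j}$, and invoke Theorem~\ref{thm:main}. Your restriction-to-the-slice argument for minimality of the generating set is a detail the paper leaves implicit, and is a welcome addition since the iterated mapping cone is not a priori minimal.
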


Other cases of where we can apply LW-lifting are the following:
\begin{prop}
\[\begin{array}{rcl}
\sigma_{4}(\PP^\red{2}\times \PP^\red{2}\times \PP^\red{2}) & \text{ is aCM, deg. 9 hypersurface } & \text{[Strassen]}. \\

\GL(4). \sigma_{4}(\PP^\red{2}\times \PP^\red{2}\times \PP^\red{2}) &\text{ is aCM  and $\codim$ 3 in}& \sigma_{4}(\PP^\red{2}\times \PP^\red{2}\times \PP^\blue{3}).
\\
\GL(4)^{\times 2}. \sigma_{4}(\PP^\red{2}\times \PP^\red{2}\times \PP^\red{2}) &\text{ is aCM and $\codim$ 4 in }&\sigma_{4}(\PP^\red{2}\times \PP^\blue{3}\times \PP^\blue{3}).
\\
\GL(4)^{\times 3}. \sigma_{4}(\PP^\red{2}\times \PP^\red{2}\times \PP^\red{2}) &\text{ is aCM and $\codim$ 5 in }& \sigma_{4}(\PP^\blue{3}\times \PP^\blue{3}\times \PP^\blue{3}).
\\
\GL(4). \sigma_{4}(\PP^\red{2}\times \PP^\red{2}\times \PP^\blue{3})&\text{  is aCM and $\codim$ 1 in }& \sigma_{4}(\PP^\red{2}\times \PP^\blue{3}\times \PP^\blue{3}).
\\
\GL(4)^{\times2} .\sigma_{4}(\PP^\red{2}\times \PP^\red{2}\times \PP^\blue{3})&\text{  is aCM and $\codim$ 2 in }&\sigma_{4}(\PP^\blue{3}\times \PP^\blue{3}\times \PP^\blue{3}).
\\
\GL(4). \sigma_{4}(\PP^\red{2}\times \PP^\blue{3}\times \PP^\blue{3})&\text{  has $\codim$ 1 in }&\sigma_{4}(\PP^\blue{3}\times \PP^\blue{3}\times \PP^\blue{3}).
\end{array}\]
\end{prop}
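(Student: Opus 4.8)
The seven assertions split into three groups: lines 1--4 lift the Strassen hypersurface $X_0:=\sigma_4(\PP^2\times\PP^2\times\PP^2)$, lines 5--6 lift $X_1:=\sigma_4(\PP^2\times\PP^2\times\PP^3)$, and line 7 is a pure dimension count. The aCM assertions are each to be settled by a single application of Theorem~\ref{thm:main} (line 1 being the trivial remark that a hypersurface is aCM), and the codimension assertions by the standard fibre-bundle description of orbit closures over products of Grassmannians. I would treat these two aspects separately and uniformly.

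For the aCM part, first record the equivariant resolutions of the two base varieties. The variety $X_0$ is a hypersurface of degree $9$ cut out by the (up to scalars unique) $\SL(3)^{\times 3}$-invariant of that degree, which spans the one-dimensional module with multi-partition $\pi=((3,3,3),(3,3,3),(3,3,3))$; its Koszul resolution $0\to R(-9)\to R\to R/I_{X_0}\to 0$ thus involves only $(\emptyset,\emptyset,\emptyset)$ and that $\pi$, so $X_0$ has a $(3,3,3)$-small resolution. For $X_1$ one reads off the displayed $\GL(3)\times\GL(3)\times\GL(4)$-equivariant resolution: the first part of every partition that appears is at most $5$ in each $\PP^2$-factor and at most $3$ in the $\PP^3$-factor, so $X_1$ has a $(5,5,3)$-small resolution. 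Now for each of lines 2--6 apply Theorem~\ref{thm:main} once, with $Y$ the relevant base variety, lifting all the factors that change dimension simultaneously (a single application suffices, and iterating one factor at a time would only inflate the output bounds $s_j$ and is unnecessary). The ranks are $r=(3,3,3)$ in the $X_0$-liftings and $r=(3,3,4)$ in the $X_1$-liftings, so for every lifted index $j$ the smallness threshold $\w{r_j}-r_j$ equals $9-3=6$ (for $X_0$) or $12-3=9$ (for $X_1$). Since $3\le 6$ and $5\le 9$, the hypothesis of Theorem~\ref{thm:main} holds in every case, and the theorem gives that each of the orbit closures in lines 2--6 is aCM.

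For the codimensions, recall that if $Y\subseteq \PP(A_1'\o\cdots\o A_n')$ is non-degenerate in each factor and $A_i'\subseteq A_i$, then $\overline{G.Y}$ is, over a dense open set, a fibre bundle over $\prod_i\Gr(\dim A_i',A_i)$ with fibre a copy of $Y$, so
\[
\dim\overline{G.Y}=\dim Y+\sum_i(\dim A_i')(\dim A_i-\dim A_i').
\]
All the base secants are non-degenerate in each factor (a general rank-$4$ tensor of format $3\times3\times3$, $3\times3\times4$ or $3\times4\times4$ has full multilinear rank in the first two factors and rank at most $4$ always), and $\overline{G.Y}$ is contained in the ambient secant variety since rank at most $4$ is preserved under enlarging the tensor factors and acting by the larger group. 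It then remains to insert the dimensions of the ambient secant varieties: $\sigma_4$ of formats $3\times3\times3$, $3\times3\times4$, $3\times4\times4$, $4\times4\times4$ has dimension $25$, $31$, $35$, $39$ respectively (the first two are established above, the last two are the expected values, these secants being non-defective). With $\dim\Gr(3,4)=3$ one reads off all seven codimensions; for example $\dim\GL(4)^{\times3}.X_0=25+3\cdot3=34$ in $\sigma_4(\PP^3\times\PP^3\times\PP^3)$ of dimension $39$, codimension $5$, and $\dim\GL(4).\sigma_4(\PP^2\times\PP^3\times\PP^3)=35+3=38$, codimension $1$, which is line 7.

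The step that requires the most care is the smallness bookkeeping: one must be certain which partitions really occur in the two base resolutions -- for $X_0$ this rests on identifying Strassen's equation with the Schur module indexed by $(3,3,3)$ in each factor, and for $X_1$ on the correctness and completeness of the displayed resolution -- and then verify $\pi^j_1\le\w{r_j}-r_j$ for every lifted factor in every line. The verification is comfortable only because all relevant factors are lifted in a single application of Theorem~\ref{thm:main}; an iterated lifting would force one to track the growing output bounds from the ``Moreover'' clause. The genuinely external inputs are the two base resolutions and the non-defectivity of $\sigma_4$ in formats $3\times4\times4$ and $4\times4\times4$, which I would cite.
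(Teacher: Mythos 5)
Your proof is correct and follows essentially the same approach the paper intends: the aCM claims (lines 2--6) are a direct application of the LW-lifting theorem (Theorem~\ref{thm:main}), using the known $(3,3,3)$-small Koszul resolution of the Strassen hypersurface and the displayed $(5,5,3)$-small equivariant resolution of $\sigma_4(\PP^2\times\PP^2\times\PP^3)$ to verify the smallness hypothesis $\pi^j_1\le\w{r_j}-r_j$ factor by factor, while the codimension claims follow from the birational fibre-bundle description $\overline{G.\w{Y}}\sim\Gran\times\w{Y}$ and the standard dimensions of the ambient secant varieties. Your observation that a single simultaneous application of the lifting theorem suffices is correct for the aCM conclusion and is a reasonable simplification; the paper's preference for lifting one dimension at a time (in the later Gorenstein section) is only needed to keep the quotient bundles of rank one so that the terminal modules of the resolutions can be tracked, which is not required here.
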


So what remains is to determine if we may lift the aCM property further.  If we can do this, this will complete a major step forward, since in particular it would solve the salmon conjecture \cite{prize, Friedland2010_salmon, Friedland-Gross2011_salmon, BatesOeding}.

We can also lift the result from \cite{OedingSam} as follows:
\begin{prop}
Let $G=\GL(n_{1})\times \dots \times \GL(n_{5})$, and let $X = \sigma_{5}((\PP^{1})^{\times 5})\subset \PP^{31}$. Then $G.X \subset \PP^{n_{1}n_{2}n_{3}n_{4}n_{5}-1}$ is aCM, and its ideal is generated by $3 \times 3$ minors of flattenings together with the $G$-module $\langle G.f_{6}\rangle \oplus \langle G.f_{16}\rangle$ where $f_{6}$ and $f_{16}$ are the minimal generators of the ideal of $X$. 
\end{prop}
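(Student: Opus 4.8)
The plan is to apply Theorem~\ref{thm:main} with $Y = X = \sigma_{5}((\PP^1)^{\times 5})$, treated as a $G'$-variety where $G' = \GL(2)^{\times 5}$, so that $\overline{G.Y} = \overline{G.X}$ with $G = \GL(n_1)\times\dots\times\GL(n_5)$ and $A_i' \cong \CC^2 \subseteq A_i \cong \CC^{n_i}$. First I would verify the hypothesis of Theorem~\ref{thm:main}: namely that $X$ is aCM (this is the \cite{OedingSam} result: $X$ is a complete intersection of two forms $f_6, f_{16}$, hence aCM) and that $X$ admits a resolution that is $(\w{r_j}-r_j)$-small for every $j$ with $0 < r_j < a_j$. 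Here $r = 2$ in the language of the boxes (since $X = \Sub$-type data over $2$-dimensional spaces inside the $A_i'$; more precisely the relevant $r_j$ is the bound on the $j$-th partition part coming from the $2\times2\times2\times2\times2$ situation), so $\w{r_j} = 2^4 = 16$ and $\w{r_j} - r_j = 14$. Since $X$ is a complete intersection of two $G'$-invariant forms, its Koszul resolution has terms $\bigwedge^\bullet(\langle G'.f_6\rangle \oplus \langle G'.f_{16}\rangle)$, and I would compute (or cite from \cite{OedingSam}) the Schur-module content of $f_6$ and $f_{16}$ — the first parts of the partitions indexing these modules are small (bounded well below $14$ in each of the five factors, since the generators have degree $\le 16$ distributed across five tensor factors), so the $(\w{r_j}-r_j)$-small condition holds comfortably in every factor with $0 < r_j < a_j$.

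Given the hypothesis, Theorem~\ref{thm:main} immediately yields that $\overline{G.X}$ is aCM; this is the main content of the proposition and requires no further argument beyond the verification above. The remaining claim is the description of the ideal. For this I would argue as in the Segre case in \cite{Landsberg-Weyman-Bull07}: the iterated mapping cone construction of Appendix~\ref{sec:cone} not only shows aCM-ness but also produces the $0$-th term of the resolution of $\CC[\overline{G.X}]$, i.e.\ the generators. These split into two sources: (i) the generators inherited from $X$, namely the $G$-span $\langle G.f_6\rangle \oplus \langle G.f_{16}\rangle$ of the pulled-back complete-intersection equations, and (ii) the new generators introduced by the LW-lifting step when passing from $A_i'$ to $A_i$, which are exactly the $(r+1)\times(r+1) = 3\times 3$ minors of the flattenings $A_i^* \to A_1\o\dots\o\w{A_i}\o\dots\o A_n$ (these cut out the subspace variety $\Sub_{2,\dots,2}$, and $\overline{G.X}$ sits inside that subspace variety with the inherited equations cutting it down). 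One then checks these generate by comparing with the resolution output of the mapping cone, exactly as the analogous statement was verified for $\sigma_2$ of four factors and $\sigma_3$ of three factors.

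The main obstacle I expect is the verification of the $(\w{r_j}-r_j)$-small hypothesis: one must actually know the Schur-module decomposition of the two minimal generators $f_6$ and $f_{16}$ of $\I(X)$ (or at least a resolution of $\CC[X]$) in enough detail to read off the first part of each partition in each of the five tensor factors. Since $X$ is a complete intersection, the full resolution is the Koszul complex on $\langle G'.f_6\rangle\oplus\langle G'.f_{16}\rangle$, so it suffices to decompose $\langle G'.f_6\rangle$, $\langle G'.f_{16}\rangle$, and their product $\langle G'.f_6\rangle\otimes\langle G'.f_{16}\rangle$ as $\GL(2)^{\times5}$-modules; this is a finite computation (e.g.\ via \textsf{HighestWeights}) and in $\GL(2)$ every partition has at most two rows with first part bounded by the degree, so the bound $14$ is met with room to spare. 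A secondary point to be careful about is bookkeeping: making sure the indices $r_j$, $a_j$, $\w{r_j}$, $\w{a_j}$ are correctly matched to the five factors and that the conclusion's $(s_j)$-smallness does not accidentally obstruct a later lifting — but for the statement as given, only the aCM conclusion and the generation statement are needed.
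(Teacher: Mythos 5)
Your overall strategy matches the paper's proof: you apply Theorem~\ref{thm:main} to the Koszul resolution of the complete intersection $\langle f_6, f_{16}\rangle$ from \cite{OedingSam}, verify the $(\w{r_j}-r_j)$-small hypothesis with $\w{r_j}-r_j = 2^4-2 = 14$, and read off the generators from the mapping-cone. However, your verification of the smallness hypothesis contains a genuine error in reasoning. You assert that the first parts of the partitions are small ``since the generators have degree $\le 16$ distributed across five tensor factors'' and that ``in $\GL(2)$ every partition has at most two rows with first part bounded by the degree, so the bound $14$ is met with room to spare.'' Neither claim is correct. In the isotypic decomposition of $\CC[A_1^*\o\dots\o A_5^*]$, a degree-$d$ homogeneous piece corresponds to modules $S_{\pi^1}A_1\o\dots\o S_{\pi^5}A_5$ where \emph{each} $\pi^j$ has $d$ boxes — the degree is not ``distributed'' across factors. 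And ``first part bounded by the degree'' gives $\pi^j_1 \le 16$ for $f_{16}$ and $\pi^j_1 \le 22$ for the last Koszul module $\langle f_6\rangle\o\langle f_{16}\rangle$; both bounds exceed $14$, so they establish nothing.

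The observation that actually closes the gap — and the one the paper uses — is that each term of the Koszul resolution is a \emph{one-dimensional} $\GL(2)^{\times 5}$-representation (since $X$ is a complete intersection of exactly two relative invariants). A one-dimensional degree-$d$ module for $\GL(2)^{\times 5}$ must be $(\det)$-powers in each factor, i.e.\ indexed by the square quintuple $(d/2,d/2)^{\times 5}$. Hence the first parts across the whole resolution are $0$, $3$, $8$, and $11$ (for degrees $0$, $6$, $16$, $22$), the maximum being $11 < 14$. Note also that it does not suffice to check only the generators $f_6$ and $f_{16}$: the definition of $(s_j)$-small requires the bound on \emph{every} module in the resolution, and the worst case here is the last Koszul module in degree $22$, which you did not consider. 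With the $1$-dimensionality observation in hand, your argument goes through and coincides with the paper's proof.
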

\begin{proof}
Let $R = \CC[\CC^{2^{5}}]$.
The main result of \cite{OedingSam} is that $X$ is a complete intersection of $f_{6}$ and $f_{16}$, so it has resolution:
\[
0 \to R(-22) \to R(-6)\oplus R(-16) \to R 
.\]
The rank-1 module $R(-22)$ is a one-dimensional irreducible $GL(2)^{\times 5}$ module in degree 22, so it is described by the quintuple of partitions $(11,11)^{\times 5}$. Note $r = (2,2,2,2,2)$ and $\hat r_{j} - r_{j} = 16-2 = 14$ for all $j$. So the resolution is $\hat r_{j} - r_{j}$-small ($14>11$), and we can apply LW-lifting to obtain the result.
\end{proof}

\subsection{Matrix Pencils, $\sigma_{2}(\PP^{a}\times \PP^{b}  \times \PP^{c} )$ and $\sigma_{r}(\PP^{1}\times \PP^{b}\times \PP^{c})$}
Notice that $\sigma_{2}(\PP^{1}\times \PP^{1}\times \PP^{1})= \PP^{7}$, thus is aCM and trivially has a resolution by small partitions. Therefore by LW-lifting, $\sigma_{2}(\PP^{a}\times \PP^{b}  \times \PP^{c})$ is aCM for all $a,b,c \geq 1$.  If any of $a,b,c$ are equal to $0$, this reverts us back to the matrix case, and we know that the bounded-rank matrix varieties are all aCM. 
Thus
\begin{prop}[\cite{Landsberg-Weyman-Bull07}, \cite{CGG5_rational}] The following hold.

\begin{itemize}
\item $\sigma_{2}(\PP^{a}\times \PP^{b}  \times \PP^{c} )$ is aCM for all $a,b,c \geq 1$.

\item $\sigma_{k}(\PP^{1}\times \PP^{b}  \times \PP^{c} )$ is aCM for all $k, b,c \geq 1$.
\end{itemize}
\end{prop}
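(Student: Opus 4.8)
The plan is to deduce both items from \emph{LW-lifting} (Theorem~\ref{thm:main}) applied to a base variety that is an entire projective space, so that the ``resolution by small partitions'' hypothesis holds automatically; equivalently, and more transparently, one identifies each of these secant varieties with a subspace variety and invokes the fact, recalled in the list above \cite{Weyman}, that subspace varieties are aCM. The single geometric input is that a generic pencil of $k\times k$ matrices has tensor rank $k$: for generic $sA+tB$ the matrix $A$ is invertible and $A^{-1}B$ has $k$ distinct eigenvalues, so in suitable bases the pencil is diagonal, a sum of $k$ rank-one tensors. Hence $\sigma_{k}(\PP^{1}\times\PP^{k-1}\times\PP^{k-1})$ is dense in, and being closed equals, the ambient $\PP^{2k^{2}-1}$; in particular \emph{every} tensor of $\CC^{2}\o\CC^{k}\o\CC^{k}$ has border rank at most $k$, and for $k=2$ this says $\sigma_{2}(\PP^{1}\times\PP^{1}\times\PP^{1})=\PP^{7}$.

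For the first item I would prove $\sigma_{2}(\PP^{a}\times\PP^{b}\times\PP^{c})=\Sub_{2,2,2}(\CC^{a+1}\o\CC^{b+1}\o\CC^{c+1})$ for all $a,b,c\geq1$. The inclusion ``$\subseteq$'' is the flattening-rank bound: a border-rank-$\leq2$ tensor has all three flattening ranks at most $2$, a closed condition, so the closure lands in $\Sub_{2,2,2}$. For ``$\supseteq$'', a tensor in a $(2,2,2)$-subspace lies in a copy of $\CC^{2}\o\CC^{2}\o\CC^{2}$, hence has border rank $\leq2$ by the previous paragraph, and the decomposable tensors of an approximating sequence remain decomposable in the ambient space. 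Since subspace varieties are aCM \cite{Weyman}, this finishes it. Equivalently, with $Y=\PP(A_{1}'\o A_{2}'\o A_{3}')$ and $\dim A_{i}'=2$ one has $\overline{G.Y}=\sigma_{2}(\PP^{a}\times\PP^{b}\times\PP^{c})$, and Theorem~\ref{thm:main} applies because its smallness requirement lands only on factors $j$ with $0<r_{j}=2<a_{j}$, where $\w{r_{j}}-r_{j}=4-2=2$ is trivially met by the trivial resolution of $\CC[\PP^{7}]$. If one of $a,b,c$ is $0$, the variety is a bounded-rank matrix variety, aCM by Eagon--Hochster.

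For the second item the same argument gives $\sigma_{k}(\PP^{1}\times\PP^{b}\times\PP^{c})=\Sub_{2,\min(k,b+1),\min(k,c+1)}(\CC^{2}\o\CC^{b+1}\o\CC^{c+1})$ for all $k,b,c\geq1$: ``$\subseteq$'' is again the flattening-rank bound (the factor-$1$ rank being always $\leq2$), and for ``$\supseteq$'' a tensor in a $(2,\leq k,\leq k)$-subspace lies in a copy of $\CC^{2}\o\CC^{k}\o\CC^{k}$, hence has border rank $\leq k$. Subspace varieties are aCM \cite{Weyman}; alternatively, for $b,c\geq k-1$ this is LW-lifting of $\sigma_{k}(\PP^{1}\times\PP^{k-1}\times\PP^{k-1})=\PP^{2k^{2}-1}$, whose smallness condition reads $\w{r_{j}}-r_{j}=2k-k=k$ on factors $2,3$ (and nothing on factor $1$, where $r_{1}=a_{1}=2$) and is vacuous. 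For $b=0$ or $c=0$ one is in the aCM matrix case.

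The only delicate points are the two set-theoretic identifications with subspace varieties (equivalently, orbit closures of projective spaces under $G$), which rest on the pencil-rank fact and lower semicontinuity of flattening rank; the smallness hypothesis of Theorem~\ref{thm:main} costs nothing here precisely because the base varieties are whole projective spaces, carrying only the trivial Schur module---in the harder instances of LW-lifting elsewhere in the paper, establishing smallness of the base resolution is the real obstacle. One caution worth recording: the naive identification $\sigma_{k}(\PP^{a}\times\PP^{b}\times\PP^{c})=\Sub_{k,k,k}$ fails for $k\geq3$ (already $\sigma_{3}(\PP^{2}\times\PP^{2}\times\PP^{2})$ is Strassen's hypersurface, not $\PP^{26}$); what rescues both statements above is that one tensor factor is pinned to dimension $2$, or $k=2$, so the relevant base space is $\CC^{2}\o\CC^{k}\o\CC^{k}$, which genuinely consists only of border-rank-$\leq k$ tensors.
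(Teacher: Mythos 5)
Your proof is correct and follows essentially the same route as the paper: for the first item, use $\sigma_{2}(\PP^{1}\times\PP^{1}\times\PP^{1})=\PP^{7}$ and lift (either by LW-lifting applied to a base with trivial resolution, or equivalently by identifying the target with $\Sub_{2,2,2}$); for the second, identify $\sigma_{k}(\PP^{1}\times\PP^{b}\times\PP^{c})$ with a subspace variety using the rank of a generic matrix pencil. The only difference is that the paper simply cites Strassen for the identification $\Sub_{2,r,r}=\sigma_{r}(\PP^{1}\times\PP^{b}\times\PP^{c})$ and leaves the flattening/closure details implicit, whereas you supply the generic-pencil diagonalization argument and the two set-theoretic inclusions explicitly — a welcome expansion of the paper's terse proof, but not a different approach.
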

The second result follows from LW-lifting and a result of Strassen that says that $\Sub_{2,r,r} = \sigma_{r}(\PP^{1}\times \PP^{b} \times \PP^{c})$.   Some cases of this result are also contained in \cite[Thm.~2.4(ii,iii)]{CGG5_rational}. 
Here's a slight generalization.
\begin{prop}[{\cite[Thm.~7.3.1.3]{LandsbergTensorBook}}]
Suppose $X^{n}\subset \PP^{N}$ is a variety not contained in a hyperplane.  Then for all $s \geq N -n$,  $\sigma_{s}(X\times \PP^{r}) = \sigma_{s}(\PP^{N}\times \PP^{r})$ is defined by the $(s+1)\times (s+1)$ minors of a generic matrix, and thus is aCM.
\end{prop}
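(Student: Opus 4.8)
The content of this proposition is the set‑theoretic equality $\sigma_{s}(X\times\PP^{r})=\sigma_{s}(\PP^{N}\times\PP^{r})$; once this is in hand everything else is classical. Under the Segre embedding $\PP(\CC^{N+1}\o\CC^{r+1})$ is the projectivization of the space of $(N+1)\times(r+1)$ matrices, and $\sigma_{s}(\Seg(\PP^{N}\times\PP^{r}))$ is exactly the generic determinantal variety $\{\operatorname{rank}\le s\}$, whose prime ideal is generated by the $(s+1)\times(s+1)$ minors of the generic matrix and which is aCM by the classical theory of determinantal loci (Hochster--Eagon); this is the ``Determinantal varieties'' item of Section~\ref{sec:NotationEvidence}. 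Granting the equality, $\sigma_{s}(X\times\PP^{r})$ has this same prime ideal and coordinate ring, and is therefore ``secretly determinantal'' in the sense of \cite[Thm.~2.4]{CGG5_rational}. So the plan is to prove the equality.

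The inclusion $\sigma_{s}(X\times\PP^{r})\subseteq\sigma_{s}(\PP^{N}\times\PP^{r})$ is immediate from $X\subseteq\PP^{N}$, and both sides are closed and irreducible, so it is enough to show that a general point of the right‑hand side lies in the left‑hand side. I would first treat the range $N-n+1\le s\le\min(N,r)$, in which the right‑hand side is a proper determinantal variety. A general matrix $M$ in $\sigma_{s}(\PP^{N}\times\PP^{r})$ has rank exactly $s$; let $U:=\operatorname{colsp}(M)\subseteq\CC^{N+1}$, a subspace of dimension $s$. Since the rank‑$s$ locus is irreducible and $\GL(N+1)$ acts transitively on $\Gr(s,\CC^{N+1})$, the subspace $U$ is a general $s$‑dimensional subspace. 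For any basis $v_{1},\dots,v_{s}$ of $U$ there are unique $w_{1},\dots,w_{s}\in\CC^{r+1}$ with $M=\sum_{i=1}^{s}v_{i}\o w_{i}$, so it suffices to choose such a basis with every $[v_{i}]\in X$; equivalently, to show that $X\cap\PP(U)$ spans the linear space $\PP(U)\cong\PP^{s-1}$.

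This is the crux, and where essentially all the work sits. I would deduce it from the classical fact that for an irreducible nondegenerate $X^{n}\subseteq\PP^{N}$ and a general linear subspace $L$ with $\dim L\ge\codim_{\PP^{N}}X=N-n$, the section $X\cap L$ is nondegenerate in $L$: for $\dim L>N-n$ this is the statement that general linear sections of nondegenerate varieties stay nondegenerate (iterated hyperplane sections via Bertini), while at the boundary $\dim L=N-n$ it is the uniform position principle, i.e.\ that the $\deg X\ (\ge N-n+1)$ points of a general codimension‑$n$ linear section are in linearly general position (see e.g.\ \cite{Harris}). Applied with $L=\PP(U)$, of dimension $s-1\ge N-n$, this finishes the present range, and passing to closures turns density into equality.

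Finally, when $s\ge\min(N+1,r+1)$ the right‑hand side is all of $\PP(\CC^{N+1}\o\CC^{r+1})$ (every matrix has rank at most $\min(N+1,r+1)\le s$, and correspondingly there are no $(s+1)\times(s+1)$ minors), and $\sigma_{s}(X\times\PP^{r})$ fills the ambient space too by a similar but easier argument: nondegeneracy of $X$ gives a basis of $\CC^{N+1}$ consisting of points of the affine cone $\widehat{X}$, so $\sigma_{N+1}(X)=\PP^{N}$, and expanding the columns of an arbitrary matrix in that basis places it in $\sigma_{N+1}(X\times\PP^{r})\subseteq\sigma_{s}(X\times\PP^{r})$ once $s\ge N+1$; the intermediate sub‑case $\min(N+1,r+1)\le s<N+1$ is handled the same way after first enlarging $\operatorname{colsp}(M)$ to a general subspace of dimension at most $s$ that is spanned by points of $X$. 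With the equality proved, ``defined by the $(s+1)\times(s+1)$ minors, hence aCM'' is exactly the determinantal statement recalled in the first paragraph. The single genuine obstacle will be the general‑position/uniform‑position input of the third paragraph; everything else is bookkeeping.
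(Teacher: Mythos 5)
Your proposal is correct in substance but takes a genuinely different route from the paper's one-line proof, which simply cites Landsberg's book for the set-theoretic equality $\sigma_{s}(X\times\PP^{r})=\sigma_{s}(\PP^{N}\times\PP^{r})$ and then reads off the classical determinantal/aCM consequences. You actually prove the equality. The reduction you use — a general rank-$s$ matrix has a general $s$-dimensional column span $U$, and it suffices to find a basis of $U$ lying on the affine cone $\widehat{X}$, which follows from nondegeneracy of a general linear section of a nondegenerate variety (iterated Bertini, together with linear general position at the boundary codimension) — is the standard argument and is sound, including your treatment of the degenerate range $s>\min(N,r)$ by passing to a general enlargement of the column span.

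One thing you should make explicit rather than silently correct: you begin the substantive case at $s\geq N-n+1$, while the proposition as printed says $s\geq N-n$. You are right to shift by one. At $s=N-n$ with $N>n$ a general $(s-1)$-plane misses $X$, and the asserted equality actually fails: for a conic $X\subset\PP^{2}$ ($n=1$, $N=2$) and $s=1$ one has $\sigma_{1}(X\times\PP^{r})=X\times\PP^{r}\subsetneq\PP^{2}\times\PP^{r}=\sigma_{1}(\PP^{2}\times\PP^{r})$, and more generally $\dim\sigma_{N-n}(X\times\PP^{r})\leq(N-n)(n+r+1)-1$, which is strictly smaller than $\dim\sigma_{N-n}(\PP^{N}\times\PP^{r})=(N-n)(n+r+2)-1$ once $N>n$. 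So the hypothesis should read $s\geq N-n+1$, equivalently $s>\codim X$; your write-up tacitly uses the corrected bound, and it would be better to state that the boundary case must be excluded (and why) than to appear to have left it unaddressed.
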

\begin{proof}
For the equality, see Landsberg's book. 
The fact about the ideal and the aCM properties follow from this isomorphism.
\end{proof}

\subsection{Arithmetically Gorenstein cases}\label{newAG}
Recall if $R$ is a polynomial ring and $I$ is an ideal, we say that the variety $\V(I)$ is \emph{arithmetically Gorenstein (aG)} if the coordinate ring $R/I$ is Gorenstein, which, in turn, can be detected by the following sufficient condition:
Suppose 
\[
I \leftarrow R^{b_{1}}\leftarrow \dots \leftarrow R^{b_{c}} \leftarrow 0
\]
is a free resolution of $I$ with length $c$ equal to the codimension of $I$ (i.e. $R/I$ is CM) and $b_{c}=1$.   This says that we can determine if the aG property holds if we know that the aCM property holds and the dual module of $I$ has rank 1. 

Fisher proved the following very nice result regarding secants of elliptic normal curves.
\begin{theorem}[\cite{Fisher}] Let $R = k[x_{1},\ldots,x_{n}]$ be the homogeneous coordinate ring of $\PP^{n-1}$. 
Let $C\subset \PP^{n-1}$ be an elliptic normal curve. If $m = n-2r \geq 2$, then $I(\sigma_{r}(C))$ has a minimal graded free resolution of the form
\begin{multline*}
0 \to R(-n) \to R(-n+r+1)^{b_{m-1}} \to R(-n+r+2)^{b_{m-2}} \\ \to \dots \to R(-r-2)^{b_{2}} \to R(-r-1)^{b_{1}} \to R \to 0
.\end{multline*}
In particular, $\sigma_{r}(C)$ is projectively Gorenstein of codimension $m$.
\end{theorem}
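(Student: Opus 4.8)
The plan is to deduce all three assertions --- arithmetic Cohen--Macaulayness of codimension $m=n-2r$, arithmetic Gorensteinness, and the precise graded shape --- from an analysis of the minimal free resolution of the homogeneous coordinate ring of $\sigma_r(C)$, built on the secant--bundle desingularization. Write $C=\phi_L(E)\subset\PP W$, where $E$ is an elliptic curve, $L$ a degree-$n$ line bundle, $W=H^0(E,L)^*$, so that $\PP W=\PP^{n-1}$ and $R=\Sym(W^*)$; since secant varieties of curves are never defective, $\dim\sigma_r(C)=2r-1$ and $\codim\sigma_r(C)=m$, with $m\ge2$ keeping it a proper, non-hypersurface subvariety. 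Two sanity checks orient the argument: for $r=1$ this is the classical fact that an elliptic normal curve of degree $n$ is arithmetically Gorenstein with canonical module $R/I$ (reflecting $\omega_E\cong\O_E$), and for $m=2$ it says $\sigma_r(C)$ is a complete intersection of two forms of degree $r+1$.

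For the coordinate ring and its resolution I would use the secant bundle $\mathbb B_r\to\sigma_r(C)$: over the symmetric product $\Sym^r E$, which the Abel map $\Sym^r E\to\operatorname{Pic}^r E\cong E$ realizes as a $\PP^{r-1}$-bundle over $E$, there is the rank-$r$ secant sheaf $\mathcal S\hookrightarrow W\otimes\O_{\Sym^r E}$ (fibre over a divisor $D$ equal to the span of the evaluation functionals of $D$), and $\mathbb B_r=\PP(\mathcal S)$ maps to $\PP W$ with image $\sigma_r(C)$, birationally when $m\ge1$. Weyman's geometric technique (Appendix~\ref{sec:weyman}), with the fibrewise cohomology computed by Bott's algorithm (\ref{sec:Bott}) on the two-step tower $\mathbb B_r\to\Sym^r E\to E$, produces a complex of free $R$-modules; the essential point is that $\Sym^r E$ has $h^1(\O)=h^1(\O_E)=1\ne0$, so $R^{>0}\beta_*\O_{\mathbb B_r}\ne0$ and this geometric complex does \emph{not} resolve the coordinate ring directly. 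One must pass to the true resolution through the iterated mapping-cone construction (Appendix~\ref{sec:cone}), mapping-coning against the resolution of $R^{>0}\beta_*\O_{\mathbb B_r}$ --- for $r=1$ simply a shifted Koszul complex on the $n$ coordinate variables; I expect this is cleanest run as an induction on $r$, using the geometry of $\sigma_{r-1}(C)\subset\sigma_r(C)$ to track the correction terms. The outputs needed are: (i) after minimization the resolution has length exactly $m$ --- a vanishing statement for the relevant top cohomology on $\Sym^r E$ --- which with $\codim I=m$ gives aCM; (ii) a surviving summand $R(-n)$ in homological degree $m$, namely the top of that Koszul-type complex, which is exactly the Gorenstein socle; and (iii) the generators of $I$ sit in degree $r+1$.

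From (ii), $F_m=R(-n)$ is free of rank $1$, so $\sigma_r(C)$ is arithmetically Gorenstein --- equivalently $\omega_{\sigma_r(C)}\cong\O_{\sigma_r(C)}$, the higher-secant analogue of $\omega_E\cong\O_E$. The minimal resolution of an arithmetically Gorenstein ring is self-dual, $F_i^\vee(-n)\cong F_{m-i}$; combining this with $F_0=R$ and, from (iii), $F_1=R(-r-1)^{b_1}$ forces $F_m=R(-n)$, $F_{m-1}=R(-n+r+1)^{b_1}$, and inductively $F_i=R(-r-i)^{b_i}$ with $b_i=b_{m-i}$ --- provided each $F_i$ is concentrated in the single internal degree $r+i$, which again comes out of the Bott computation (or from a Castelnuovo--Mumford regularity bound). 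Throughout one keeps the action of the Heisenberg theta group of level $n$, which strongly constrains the Betti numbers $b_i$.

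The main obstacle is the mapping-cone-and-minimize step: one must control precisely which free summands cancel, confirm the length drops to exactly $m$, and verify the resolution is pure in each homological degree, rather than merely concluding aG. This is also exactly where $\sigma_r(C)$ departs from the textbook behaviour of secant varieties of curves of sufficiently high degree (there the geometric complex is already the resolution), and the departure is measured by $h^1(\O_E)=1$, the genus. A secondary difficulty is pinning down $R^{>0}\beta_*\O_{\mathbb B_r}$ itself, which sees both the contraction of the zero section (producing the $h^1(\O_E)$) and the fibres over $\sigma_{r-1}(C)$; this is the technical reason an induction on $r$ is natural. If the mapping-cone bookkeeping proves too unwieldy, the same theorem can instead be approached through von Bothmer--Hulek's purely Heisenberg-equivariant computation of the geometric syzygies, or through Fisher's explicit realization of $\sigma_r(C)$ via matrices of theta functions, from which the resolution can be read off directly, or via a liaison argument linking $\sigma_{r-1}(C)$ to $\sigma_r(C)$.
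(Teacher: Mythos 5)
The paper does not prove this theorem; it is quoted verbatim as Fisher's result, with the citation \cite{Fisher} standing in for the proof. So there is no ``paper's own proof'' to compare against, and what you have written should be read as an independent reconstruction of Fisher's argument.

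As such a reconstruction, your outline identifies the right machinery --- the secant bundle $\PP(\mathcal{S})\to\operatorname{Sym}^r E\to E$, Weyman's geometric technique, the fact that $h^1(\mathcal{O}_E)=1$ obstructs the geometric complex from being a resolution on the nose, the iterated mapping cone to correct for this, and the Heisenberg/theta-group equivariance that ultimately controls the Betti numbers --- and you are right that all three conclusions (aCM, aG, the shape) should fall out of a careful enough version of this. But the sketch stops exactly where the proof begins. The two steps you flag yourself as ``the main obstacle'' are not technical bookkeeping you can defer: (i) minimizing the mapping cone and showing the length drops to exactly $m=n-2r$ is \emph{the} content of aCM, and (ii) showing each $F_i$ for $1\le i\le m-1$ is concentrated in the single internal degree $r+i$ is \emph{the} content of the displayed shape. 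Gorenstein self-duality $F_i^\vee(-n)\cong F_{m-i}$ gives you $b_i=b_{m-i}$ and pairs the rows of the Betti table symmetrically about row $r$, but it cannot by itself rule out entries in rows $1,\dots,r-1$ (equivalently $r+1,\dots,2r-1$); a Castelnuovo--Mumford regularity bound of $2r$ likewise does not kill those rows. So the assertion that purity ``comes out of the Bott computation or from a regularity bound'' is exactly the gap. Until that vanishing is actually proved --- whether by the cohomology computation on $\operatorname{Sym}^r E$, by von Bothmer--Hulek's geometric-syzygy arguments, or by Fisher's explicit theta-matrix presentation --- what you have is a credible plan of attack, not a proof of the theorem.
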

The following is well known and follows, for instance, from Lascoux's theorem.
\begin{prop}
For all $r< \min\{a,b\}$, the matrix secant variety $\sigma_{r}(\PP^{a} \times \PP^{b})$ is arithmetically Gorenstein if and only if $a=b$.
\end{prop}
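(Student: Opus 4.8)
The plan is to identify $X:=\sigma_{r}(\Seg(\PP A\times\PP B))$, where $\dim A=a+1$ and $\dim B=b+1$, with the classical variety of $(a+1)\times(b+1)$ matrices of rank at most $r$ in $\PP(A\otimes B)$ --- which is exactly what $X$ is when $r<\min\{a,b\}$ --- and to decide the Gorenstein property by computing the canonical module of its coordinate ring $\CC[X]$. It is classical that $\CC[X]$ is a Cohen-Macaulay domain with $\GL(A)\times\GL(B)$-module structure $\CC[X]=\bigoplus_{\ell(\pi)\le r}S_{\pi}A^{*}\otimes S_{\pi}B^{*}$, and that $\codim X=(a+1-r)(b+1-r)$. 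For a graded Cohen-Macaulay domain, being arithmetically Gorenstein is equivalent to the canonical module $\omega_{\CC[X]}$ being cyclic; because $\omega_{\CC[X]}$ is canonically $\GL(A)\times\GL(B)$-equivariant, its space of minimal generators is a $\GL(A)\times\GL(B)$-representation, so cyclicity amounts to $\omega_{\CC[X]}\cong\CC[X]$ after a degree shift and a one-dimensional character twist. Hence the whole statement reduces to an equivariant computation of $\omega_{\CC[X]}$.

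For this I would use the Kempf collapsing $p\colon\widetilde X\to X$ with $\widetilde X=\{(T,U)\in(A\otimes B)\times\Gr(r,A):\im T\subseteq U\}$, the total space of $\mathcal{S}\otimes B\to\Gr(r,A)$, where $\mathcal{S}$ is the tautological rank-$r$ subbundle. This is a resolution of singularities with $Rp_{*}\mathcal{O}_{\widetilde X}=\mathcal{O}_{X}$ and $p_{*}\omega_{\widetilde X}=\omega_{X}$, so $\omega_{\CC[X]}=\Gamma(\widetilde X,\omega_{\widetilde X})$ as graded modules (up to shift) --- this is Weyman's geometric technique, as set up in the appendices. Since $\omega_{\widetilde X}=\pi^{*}\big(\omega_{\Gr(r,A)}\otimes\det(\mathcal{S}^{*}\otimes B^{*})\big)$ and $\omega_{\Gr(r,A)}=(\det\mathcal{S})^{a+1}\otimes(\det A)^{-r}$, pushing down along $\pi$, using the Cauchy decomposition $\Sym(\mathcal{S}^{*}\otimes B^{*})=\bigoplus_{\ell(\pi)\le r}S_{\pi}\mathcal{S}^{*}\otimes S_{\pi}B^{*}$, and applying Borel-Weil-Bott on $\Gr(r,A)$ --- after reducing, as we may by symmetry, to $a\le b$, which is what makes the resulting weights dominant and kills higher cohomology --- this should yield
\[
\omega_{\CC[X]}\ \cong\ \chi\otimes\bigoplus_{\ell(\pi)\le r}S_{\pi+((b-a)^{r})}A^{*}\otimes S_{\pi}B^{*},
\]
where $\chi$ is a fixed one-dimensional $\GL(A)\times\GL(B)$-character and $\pi+((b-a)^{r})$ denotes the partition obtained from $\pi$ by adding $b-a$ to each of its first $r$ parts.

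Comparing this with $\CC[X]=\bigoplus_{\ell(\pi)\le r}S_{\pi}A^{*}\otimes S_{\pi}B^{*}$ finishes the proof. If $a=b$, the right-hand side is literally $\chi\otimes\CC[X]$, so $\omega_{\CC[X]}$ is free of rank one and $X$ is arithmetically Gorenstein. If $a<b$, the summand with $\pi=\varnothing$ contributes the Schur module $S_{((b-a)^{r})}A^{*}$ of a rectangle with $r$ rows; since $r<a+1$ this has dimension strictly greater than $1$ (for instance $\binom{a+1}{r}$ when $b-a=1$), and as it sits in the bottom degree it consists of minimal generators, so $\omega_{\CC[X]}$ is not cyclic and $X$ is not arithmetically Gorenstein. (Equivalently, $\omega_{\CC[X]}\cong\CC[X]$ up to shift and character exactly when $\pi\mapsto\pi+((b-a)^{r})$ coincides with adding a constant to all $a+1$ parts, which forces $b-a=0$.) A shorter alternative is to quote Lascoux's $\GL(A)\times\GL(B)$-equivariant minimal free resolution of $\CC[X]$ and read off that its last term is precisely the single Schur module corresponding to $\pi=\varnothing$ above; the content is the same. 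I expect the main obstacle to be the bookkeeping in the Bott computation --- combining the determinantal twists from $\omega_{\Gr(r,A)}$ and from $\det(\mathcal{S}^{*}\otimes B^{*})$ correctly, and verifying the higher-cohomology vanishing --- together with keeping the reduction $a\le b$ in force so that the weights $\pi+((b-a)^{r})$ are genuine partitions.
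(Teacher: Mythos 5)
Your argument is correct, and it is more self-contained than what the paper does. The paper simply cites Lascoux's theorem (the explicit $\GL(A)\times\GL(B)$-equivariant minimal free resolution of the determinantal ideal) and reads off that the last free module is a single one-dimensional Schur module precisely in the square case --- this is exactly the ``shorter alternative'' you name at the end. Your primary route instead computes $\omega_{\CC[X]}$ directly from the Kempf collapsing $p\colon\mathrm{Tot}(\mathcal S\otimes B)\to X$: since $X$ has rational singularities, $\omega_{\CC[X]}=\Gamma(\widetilde X,\omega_{\widetilde X})$, and pushing $\omega_{\Gr(r,A)}\otimes\det(\mathcal S^*\otimes B^*)$ through $\Sym(\mathcal S^*\otimes B^*)$ and Borel--Weil (after reducing to $a\le b$) gives
\[
\omega_{\CC[X]}\cong\chi\otimes\bigoplus_{\ell(\pi)\le r}S_{\pi+((b-a)^{r})}A^{*}\otimes S_{\pi}B^{*},
\]
from which cyclicity is transparent. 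What this buys you is a proof that never invokes the full Lascoux resolution: you only need the canonical module, and the Gorenstein dichotomy drops out of the fact that the bottom summand $S_{(b-a)^r}A^*$ (for $\pi=\varnothing$) is one-dimensional iff $b-a=0$, using $r<a+1$. The two routes are of course equivalent --- the last term of Lascoux's resolution is just $\omega^{\vee}$ up to twist --- but yours localizes the computation to the single piece of the resolution that matters, in the same spirit as Lemma~\ref{lem:MCMmodule} of the appendix. One small point worth making explicit: you use $p_*\omega_{\widetilde X}=\omega_X$, which rests on $X$ having rational singularities; this is classical for generic determinantal varieties but deserves a word or a citation. Also, it would be cleaner to observe directly that $S_{(b-a)^r}A^*$ is one-dimensional only if $(b-a)^r$ is a full-height rectangle in $a+1$ rows, which $r<a$ forbids --- this avoids the appeal to the special case $b-a=1$.
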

From this result we immediately have 
\begin{prop} The following varieties are aG:
\[
\sigma_{r}(\PP^{1}\times \PP^{r-1}\times \PP^{2r-1}) \cong \Sub_{2,r,r} \CC^{2} \o \CC^{r} \o \CC^{2r} \cong \Sub_{(2r),r} (\CC^{2} \o \CC^{r}) \o \CC^{2r}
\]
\end{prop}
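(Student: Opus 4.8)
The plan is to reduce this to the immediately preceding proposition on determinantal varieties of square matrices. First I would unwind the two displayed isomorphisms. The identification $\sigma_{r}(\PP^{1}\times \PP^{r-1}\times \PP^{2r-1}) = \Sub_{2,r,r}(\CC^{2}\o\CC^{r}\o\CC^{2r})$ is exactly Strassen's result quoted above (taking $b=r$, $c=2r$). For the second isomorphism, I would observe that inside $\CC^{2}\o\CC^{r}\o\CC^{2r}$ the multilinear-rank constraints in the first two factors---that the tensor lie in a $2$-dimensional subspace of $\CC^{2}$ and in an $r$-dimensional subspace of $\CC^{r}$---are vacuous, so $\Sub_{2,r,r}$ is cut out only by the condition imposed on the third factor. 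Regrouping via the linear isomorphism $\CC^{2}\o\CC^{r}\o\CC^{2r}\cong (\CC^{2}\o\CC^{r})\o\CC^{2r}$, and noting $\dim(\CC^{2}\o\CC^{r})=2r$, that remaining condition becomes precisely the condition that the associated $2r\times 2r$ matrix have rank at most $r$. Hence $\Sub_{2,r,r}(\CC^{2}\o\CC^{r}\o\CC^{2r})\cong \Sub_{(2r),r}\big((\CC^{2}\o\CC^{r})\o\CC^{2r}\big)=\sigma_{r}(\PP^{2r-1}\times\PP^{2r-1})$.

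Since all maps involved are induced by linear isomorphisms of the ambient spaces, the corresponding homogeneous coordinate rings are isomorphic as graded rings, so the arithmetically Gorenstein property is invariant under these identifications. It then suffices to show that $\sigma_{r}(\PP^{2r-1}\times\PP^{2r-1})$, the variety of $2r\times 2r$ matrices of rank at most $r$, is aG. For $r\geq 2$ one has $r<2r-1=\min\{2r-1,2r-1\}$, so the preceding proposition applies with $a=b=2r-1$ and yields the claim at once, the two factors having equal dimension. For $r=1$ the variety degenerates to the smooth quadric $\Seg(\PP^{1}\times\PP^{1})\subset\PP^{3}$, a hypersurface and hence a complete intersection, which is aG.

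The only content beyond invoking the earlier results is the bookkeeping in the second isomorphism: one must check that ``no constraint in a factor'' for a subspace variety literally means the relevant minors vanish identically, so that the \emph{scheme} structure (not merely the underlying set) is the expected determinantal one, and that the flattening is an isomorphism of schemes, so that the aCM/aG data transfer intact. This is routine, but it is the place to be careful about dimensions---in particular that $\dim(\CC^{2}\o\CC^{r})=2r$ matches the third factor $\CC^{2r}$, which is exactly what makes the matrix square and hence the determinantal variety Gorenstein rather than merely Cohen--Macaulay.
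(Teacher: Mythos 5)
Your proof is correct and matches the paper's (implicit) argument: the paper simply says ``From this result we immediately have'' after the proposition that $\sigma_{r}(\PP^{a}\times\PP^{b})$ is aG iff $a=b$, and you have fleshed out exactly that reduction — Strassen's identification $\sigma_{r}(\PP^{1}\times\PP^{r-1}\times\PP^{2r-1})=\Sub_{2,r,r}(\CC^{2}\o\CC^{r}\o\CC^{2r})$, the vacuity of the constraints in the first two factors, the regrouping to a $2r\times2r$ matrix flattening, and the square-matrix Gorenstein fact. Your extra care about the scheme structure and the degenerate case $r=1$ is sound, though the paper leaves it tacit.
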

\begin{remark} We leave it as an open question to use LW-lifting to prove precisely which other cases of $\sigma_{k}(\PP^{1}\times \PP^{a}\times \PP^{b})$ are aG.
\end{remark}

Now we focus on the case $r=2$.
In \cite{MichalekOedingZwiernik} Michalek, Zwiernik, and the author found all cases where $\sigma_{2}(\PPA)$ is locally Gorenstein:
\begin{theorem}[\cite{MichalekOedingZwiernik}]
The only cases where $\sigma_{2}(\PPA)$  is (locally) Gorenstein are
\begin{center}
$\sigma_2(\PP ^a\times\PP ^a)$, \quad  $\sigma_2(\PP ^1\times\PP ^k) = \PP^{2k+1}$,

$ \sigma_{2}(\PP^{1}\times \PP^{1}\times \PP^{1})=\PP^{7}$,\quad
$\sigma_2(\PP ^1\times\PP ^1\times\PP ^3)$, \quad
$\sigma_2(\PP ^1\times\PP ^3 \times\PP ^3)$, \quad
$\sigma_2(\PP ^3 \times \PP ^3 \times\PP ^3)$, 

$\sigma_2(\PP ^1\times\PP ^1\times\PP ^1\times\PP ^1\times\PP ^1)$.
\end{center}
\end{theorem}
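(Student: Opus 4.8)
The plan is to reduce this global question to a single combinatorial condition on a cone, using the toric description already recalled. First observe that the non-Gorenstein locus of $\sigma_{2}(\PP^{n_{1}}\times\cdots\times\PP^{n_{d}})$ is closed and invariant under the natural action of $\prod_{i=1}^{d}\GL(\CC^{n_{i}+1})$, and that $\Seg(\PP^{n_{1}}\times\cdots\times\PP^{n_{d}})$ is the unique closed orbit, hence lies in the closure of every orbit (scale one summand of a rank-two tensor to zero). Therefore $\sigma_{2}$ is locally Gorenstein if and only if its local ring at a single point $x_{0}\in\Seg(\PP^{n_{1}}\times\cdots\times\PP^{n_{d}})$ is Gorenstein. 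By the recalled fact that $\sigma_{2}$ is covered by normal affine toric varieties, a chart $U_{\sigma}\ni x_{0}$ is a normal affine toric variety; after splitting off the smooth (affine-space) directions, $\mathcal{O}_{x_{0}}$ is Gorenstein exactly when $\sigma$ is a \emph{Gorenstein cone}, i.e.\ the primitive generators of the rays of $\sigma$ all lie on one affine hyperplane $\langle u,-\rangle=1$ for some $u$ in the dual lattice.

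Next I would make the chart at $x_{0}$ explicit. Writing $\CC^{n_{i}+1}=\langle a_{i}\rangle\oplus B_{i}$ with $x_{0}=[a_{1}\o\cdots\o a_{d}]$ and normalizing the $a_{1}\o\cdots\o a_{d}$-coordinate to $1$, the chart has coordinates indexed by the nonempty subsets $S\subseteq\{1,\dots,d\}$, the $S$-block lying in $\bigotimes_{i\in S}B_{i}$. Parametrizing a rank-two tensor as $P+Q$ with $P\to a_{1}\o\cdots\o a_{d}$ and $Q\to 0$ produces binomial relations among these coordinates; the order-one blocks ($|S|=1$) are free and can be used to eliminate parameters, and what remains is the toric variety whose cone $\sigma(n_{1},\dots,n_{d})$ I would read off. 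In the small-factor cases there are shortcuts: for $d=2$ one recovers the cone of rank-$\le 2$ matrices of size $(n_{1}+1)\times(n_{2}+1)$ (Lascoux); for $d=3$ with some $n_{i}=1$ one has Strassen's identity $\Sub_{2,2,2}=\sigma_{2}(\PP^{1}\times\PP^{b}\times\PP^{c})$, and for the balanced case the equality $\sigma_{2}(\PP^{n}\times\PP^{n}\times\PP^{n})=\Sub_{2,2,2}$ (both irreducible of the same dimension, by non-defectivity), so $\sigma$ can be read off the presentation of the subspace variety as a bundle over a product of Grassmannians; for $d\ge 4$ the higher blocks ($|S|\ge 3$) contribute genuinely new rays.

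Finally I would solve the Gorenstein-cone condition for $\sigma(n_{1},\dots,n_{d})$, organized by $d$. For $d=1$ it is vacuous. For $d=2$ the classical fact is that the rank-$\le 2$ determinantal variety is Gorenstein precisely when the matrix is square, i.e.\ $n_{1}=n_{2}$, together with the degenerate case $\min_{i}n_{i}=1$, in which $\sigma_{2}$ fills $\PP^{2k+1}$ with $k=\max_{i}n_{i}$. For $d=3$ the balancing-hyperplane condition forces every $n_{i}\in\{1,3\}$, yielding $(1,1,1),(1,1,3),(1,3,3),(3,3,3)$ up to permutation --- the values $1$ and $3$ enter because $\CC^{2}$ and $\CC^{2}\o\CC^{2}$ are exactly the spaces making the relevant quadratic relations balance, and this is the heart of the theorem. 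For $d\ge 4$ the extra rays obstruct the hyperplane condition in every case except $d=5$ with all $n_{i}=1$, where $\sigma(1,1,1,1,1)$ turns out to be a Gorenstein cone; in particular $d=4$ and $d\ge 6$ contribute nothing, which is why $\sigma_{2}((\PP^{1})^{\times 5})$ stands alone in the list.

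The hard part is this last step for $d\ge 3$: pinning down the exact ray set of $\sigma(n_{1},\dots,n_{d})$ and proving that a balancing hyperplane exists only for the listed tuples --- in particular explaining why the three-factor solutions are governed by $\{1,3\}$ rather than by ``all $n_{i}$ equal,'' and why $d=4$ drops out while $d=5$ survives. A secondary point requiring care in the reduction is to choose the chart $U_{\sigma}$ so that $x_{0}$ specializes to its distinguished torus-fixed point, so that Gorensteinness of $\mathcal{O}_{x_{0}}$ is genuinely equivalent to Gorensteinness of the whole cone $\sigma$.
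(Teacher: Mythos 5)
This theorem is not proved in the paper under review; it is cited from \cite{MichalekOedingZwiernik}, so there is no in-paper argument to compare against. What you have written is an outline of essentially the strategy that reference actually uses: reduce to a single point of the unique closed orbit (the Segre) via openness of the Gorenstein locus and $G$-invariance of its complement, invoke the local toric structure of $\sigma_{2}$ in cumulant coordinates, and translate local Gorensteinness into the combinatorial condition that the primitive ray generators of the cone lie on a common affine hyperplane. That reduction is correct, and your identification of the two minor technical points (why the non-Gorenstein locus must meet the Segre if nonempty, and why one may take $x_{0}$ to be the distinguished torus-fixed point of the chart) is the right thing to worry about.

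However, as a proof this has a genuine gap, and you have named it yourself: you never actually determine the rays of the cone $\sigma(n_{1},\ldots,n_{d})$ for $d\geq 3$, nor do you prove that a balancing hyperplane exists only for the listed tuples. The assertions that ``the balancing-hyperplane condition forces every $n_{i}\in\{1,3\}$'' for $d=3$, that $d=4$ and $d\geq 6$ drop out, and that $(1,1,1,1,1)$ is the lone survivor for $d=5$ are precisely the content of the theorem, and they are stated rather than derived. The cumulant-chart binomials and the resulting ray set need to be written down explicitly (the order-$|S|$ blocks for $|S|\geq 2$, after eliminating the $|S|=1$ parameters), and then a linear-algebra/combinatorial argument must show exactly when a dual vector $u$ with $\langle u,v_{i}\rangle=1$ on all primitive generators exists. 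Until that computation is carried out, what you have is a correct reduction to a finite combinatorial problem, not a proof of the classification. A secondary caution: for $d=2$ you should also be explicit that the only Gorenstein cases among $p\times q$ matrices of rank $\leq 2$ are $p=q$ (Lascoux) or $\min(p,q)\leq 2$ (the variety fills the ambient space), which is what yields exactly the two families $\sigma_{2}(\PP^{a}\times\PP^{a})$ and $\sigma_{2}(\PP^{1}\times\PP^{k})=\PP^{2k+1}$ in the statement; the codimension of the rank-$\leq r$ locus of a $p\times q$ matrix is $(p-r)(q-r)$, so the ``hypersurface'' case $p=3$, $q>3$ does \emph{not} occur and should not be listed.
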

\begin{prop} The following varieties are aG:
\[
\sigma_{k}(\PP^{1}\times \PP^{k-1}\times \PP^{2k-1}) \cong \Sub_{2,k,k} \CC^{2} \o \CC^{k} \o \CC^{2k} \cong \Sub_{(2k),k} (\CC^{2} \o \CC^{k}) \o \CC^{2k}
\]
\end{prop}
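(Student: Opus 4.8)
The plan is to recognize the varieties in question as a classical \emph{square} determinantal variety and then invoke the Proposition above characterizing when matrix secant varieties are arithmetically Gorenstein.

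First, the chain of isomorphisms already recorded in the statement tells us what we are really looking at. Strassen's identification $\Sub_{2,r,r}=\sigma_r(\PP^1\times\PP^b\times\PP^c)$ (used earlier in this section), applied with $r=k$ and the second and third factors $\PP^{k-1}$ and $\PP^{2k-1}$, gives $\sigma_k(\PP^1\times\PP^{k-1}\times\PP^{2k-1})\cong\Sub_{2,k,k}(\CC^2\o\CC^k\o\CC^{2k})$. In that subspace variety the conditions on the first two factors are vacuous, their dimensions being exactly $2$ and $k$, so a tensor $T$ lies in it precisely when $T\in\CC^2\o\CC^k\o C'$ for some $k$-dimensional $C'\subseteq\CC^{2k}$; equivalently, the flattening $T\colon(\CC^2\o\CC^k)^*\to\CC^{2k}$ has rank at most $k$. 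After the identification $\CC^2\o\CC^k\cong\CC^{2k}$ this is exactly the variety of $2k\times 2k$ matrices of rank at most $k$, namely $\Sub_{(2k),k}((\CC^2\o\CC^k)\o\CC^{2k})=\sigma_k(\PP^{2k-1}\times\PP^{2k-1})$. The decisive point is that the resulting matrix is square: $\dim(\CC^2\o\CC^k)=2k=\dim\CC^{2k}$.

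Next I would apply the Proposition that $\sigma_r(\PP^a\times\PP^b)$ is arithmetically Gorenstein if and only if $a=b$ (for $r<\min\{a,b\}$). Here $a=b=2k-1$ and $r=k$; for $k\ge 2$ one has $k<2k-1$, so $\sigma_k(\PP^{2k-1}\times\PP^{2k-1})$ is arithmetically Gorenstein, and hence so is each variety in the stated chain of isomorphisms. When $k=1$ the variety degenerates to $\Seg(\PP^1\times\PP^1)\subset\PP^3$, a quadric hypersurface, which is a complete intersection and therefore arithmetically Gorenstein as well.

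The only delicate step is the middle one: verifying that the three-factor subspace variety $\Sub_{2,k,k}(\CC^2\o\CC^k\o\CC^{2k})$ really does collapse to the square two-factor determinantal variety --- that the subspace conditions on the $\CC^2$- and $\CC^k$-factors are vacuous and that the natural flattening yields a $2k\times 2k$ matrix rather than a merely rectangular one. Everything after that is a direct citation: determinantal varieties are arithmetically Cohen-Macaulay (Hochster--Eagon), and Lascoux's computation of their minimal free resolutions furnishes the Gorenstein criterion packaged in the Proposition quoted above.
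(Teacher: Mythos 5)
Your proof is correct and follows the same route the paper takes, which is simply to observe (``From this result we immediately have'') that the stated chain of isomorphisms reduces the three-factor variety to the square determinantal variety $\sigma_k(\PP^{2k-1}\times\PP^{2k-1})$ and then cite the aG criterion for matrix secant varieties. You have usefully spelled out the verification that the paper leaves implicit --- namely, why the subspace conditions in the first two factors are vacuous, why the natural flattening is a $2k\times 2k$ (square) matrix, and the edge case $k=1$ --- but there is no substantive difference in approach.
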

\begin{remark} We leave it as an open question to use LW-lifting to prove precisely which other cases of $\sigma_{k}(\PP^{1}\times \PP^{a}\times \PP^{b})$ are aG.
\end{remark}

\begin{table}
\begin{tabular}{|c|c|c|}
\hline
variety & codimension & socle degree \\
\hline
$\sigma_2(\PP ^{n-1}\times\PP ^{n-1})$ & $(n-2)^{2}$ &$2n-3$
\\ \hline
$\sigma_2(\PP ^1\times\PP ^1\times\PP ^3)$& $4$ & $5$
\\ \hline
$\sigma_2(\PP ^1\times\PP ^3 \times\PP ^3)$& $16$ & $9$
\\ \hline
$\sigma_2(\PP ^3 \times \PP ^3 \times\PP ^3)$& $44 $ & $13$
\\ \hline
$\sigma_2(\PP ^1\times\PP ^1\times\PP ^1\times\PP ^1\times\PP ^1)$ & $20 $ &  ?
\\ \hline
\end{tabular}
\caption{The non-trivial arithmetically Gorenstein secant varieties and their socle degrees. (The last row conjecturally aG.)}\label{tab:gor}
\end{table}

\begin{theorem}
$\sigma_{2}(\PPA)$  is arithmetically Gorenstein in the following cases
\begin{center}
$\sigma_2(\PP ^a\times\PP ^a)$, \quad  $\sigma_2(\PP ^1\times\PP ^k) = \PP^{2k+1}$,

$ \sigma_{2}(\PP^{1}\times \PP^{1}\times \PP^{1})=\PP^{7}$,\quad
$\sigma_2(\PP ^1\times\PP ^1\times\PP ^3)$, \quad
$\sigma_2(\PP ^1\times\PP ^3 \times\PP ^3)$, \quad
$\sigma_2(\PP ^3 \times \PP ^3 \times\PP ^3)$, 
\end{center}
The socle degrees are listed in table \ref{tab:gor}.
\end{theorem}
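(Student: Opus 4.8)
The plan is to split the listed varieties into three families. The structural fact I use for the non-matrix ones is that, since $\sigma_2((\PP^1)^{\times 3})=\PP^7$, every $2\times 2\times 2$ subtensor has border rank at most $2$; hence a tensor in $A_1\o A_2\o A_3$ has border rank $\leq 2$ exactly when it lies in some $U_1\o U_2\o U_3$ with $\dim U_i\leq 2$, i.e. $\sigma_2(\Seg(\PP A_1\times\PP A_2\times\PP A_3))=\Sub_{2,2,2}(A_1\o A_2\o A_3)$, and this is an isomorphism of schemes because both sides are reduced (a secant variety; a normal subspace variety). Two cases are immediate: $\sigma_2(\PP^1\times \PP^k)=\PP^{2k+1}$ and $\sigma_2(\PP^1\times\PP^1\times\PP^1)=\PP^{7}$ have polynomial coordinate rings, hence are arithmetically Gorenstein.

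\emph{The matrix family.} $\sigma_2(\PP^a\times\PP^a)$ is the variety of $(a{+}1)\times(a{+}1)$ matrices of rank $\leq 2$; and $\sigma_2(\PP^1\times\PP^1\times\PP^3)$ is one too, since by the structural fact it equals $\Sub_{2,2,2}(\CC^2\o\CC^2\o\CC^4)$, whose underlying set, as $\dim\CC^2=2$, is $\{T:\ \text{the only nontrivial flattening of }T\text{ has rank}\leq 2\}$; the flattening isomorphism $\CC^2\o\CC^2\o\CC^4\cong\CC^4\o\CC^4$ carries this to the rank-$\leq 2$ locus of a generic $4\times 4$ matrix, and as both are reduced with the same set this gives an isomorphism $\CC[\sigma_2(\PP^1\times\PP^1\times\PP^3)]\cong\CC[\sigma_2(\PP^3\times\PP^3)]$ of graded rings. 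For $\sigma_2(\PP^a\times\PP^a)$ with $a\geq 3$ the minimal free resolution is Lascoux's complex, which is self-dual up to a twist and a reversal precisely because the matrix is square; hence the last Betti number is $1$ and the ring is arithmetically Gorenstein (this is the Proposition recalled above that $\sigma_r(\PP^a\times\PP^b)$ is arithmetically Gorenstein iff $a=b$), with the socle degree read off from the top twist; for $a\leq 2$ the variety is $\PP^3$ or a cubic hypersurface, Gorenstein for trivial reasons.

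\emph{The tripartite cases.} There remain $X=\sigma_2(\PP^1\times\PP^3\times\PP^3)$ and $X=\sigma_2(\PP^3\times\PP^3\times\PP^3)$; by the structural fact $X=\Sub_{2,2,2}(A_1\o A_2\o A_3)$ with $(\dim A_i)=(2,4,4)$, resp. $(4,4,4)$, so $X=\overline{G.\PP^7}$ for $G=\GL(A_1)\times\GL(A_2)\times\GL(A_3)$ and $X$ is aCM by Theorem~\ref{thm:main} applied to the trivial resolution of $\CC[\PP^7]$ (and $X$ is normal with rational singularities, being a subspace variety). I would then compute the $G$-equivariant minimal free resolution of $\CC[X]$ by Weyman's geometric technique (Appendix~\ref{sec:weyman}) applied to the standard desingularization $q\colon\mathcal Z\to X$, with $\mathcal Z$ the total space over $V=\Gr(2,A_1)\times\Gr(2,A_2)\times\Gr(2,A_3)$ of the tautological subbundle $S=\underline{A_1}\o R_2\o R_3$ (for $(2,4,4)$), resp. $S=R_1\o R_2\o R_3$ (for $(4,4,4)$), $R_i\subset\underline{A_i}$ the rank-$2$ tautological subbundle. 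Writing $\xi=\underline{A_1\o A_2\o A_3}/S$ for the relative quotient, the resolution is
\[
F_i=\bigoplus_{j\geq 0}H^j\!\bigl(V,\ \textstyle\bigwedge^{\,i+j}\xi^{\vee}\bigr)\o R(-i-j),
\]
and since $X$ is Cohen--Macaulay of codimension $c=\operatorname{rank}\xi-\dim V$ it suffices to show the top module $F_c$ is the rank-one free module $R(-\operatorname{rank}\xi)$. Its $j=\dim V$ summand is $H^{\dim V}(V,\det\xi^{\vee})\o R(-\operatorname{rank}\xi)$, and a direct computation of determinants of bundles on the product of Grassmannians -- using $\det R_i=\O(-1)$, $\det(\underline{A_i}/R_i)=\O(1)$, $\omega_{\Gr(2,4)}=\O(-4)$ -- shows in each case that $\det S\cong\omega_V$, i.e. $\det\xi^{\vee}\cong\omega_V$; by Serre duality this summand is then $H^{\dim V}(V,\omega_V)\o R(-\operatorname{rank}\xi)\cong R(-\operatorname{rank}\xi)$, and by Bott's algorithm (Appendix~\ref{sec:Bott}) $\omega_V$ has cohomology only in top degree, so no other $j$ contributes to $F_c$. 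Hence $\CC[X]$ is arithmetically Gorenstein with socle degree $\operatorname{rank}\xi-c+1=\dim X-\operatorname{rank}S+1$ ($=9$, resp. $13$), matching Table~\ref{tab:gor}. Conceptually, $\det S\cong\omega_V$ already suffices: then $\omega_{\mathcal Z}=q^{*}(\omega_V\o\det S^{\vee})\cong\O_{\mathcal Z}$ up to the grading shift by $\operatorname{rank}S$, and since $R^{>0}q_{*}\O_{\mathcal Z}=0$ and $q_{*}\O_{\mathcal Z}=\O_X$ (rational singularities), pushing forward gives $\omega_X=q_{*}\omega_{\mathcal Z}\cong\CC[X](-\operatorname{rank}S)$, free of rank one, which together with Cohen--Macaulayness is arithmetic Gorensteinness.

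The step I expect to be the main obstacle is making the passage from the ``$j=\dim V$'' summand to all of $F_c$ rigorous: a priori $F_c$ could pick up extra pieces $H^j(V,\bigwedge^{\,c+j}\xi^{\vee})$ with $j<\dim V$, and one must also verify $F_{c+1}=0$ (so that the length is exactly $c$). Both reduce to running Bott's algorithm on the exterior powers $\bigwedge^{\,k}\xi^{\vee}$ for $k$ just below $\operatorname{rank}\xi$ and checking the relevant cohomology vanishes -- the combinatorial core of the computation. The conceptual shortcut through $\det S\cong\omega_V$ and rational singularities avoids this bookkeeping, but still uses the separately established Cohen--Macaulayness of $\Sub_{2,2,2}$ (equivalently, the aCM conclusion of Theorem~\ref{thm:main}), so some input of that kind cannot be avoided.
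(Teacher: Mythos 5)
Your proposal is correct in substance but takes a genuinely different route from the paper's proof. Where the paper starts from the Lascoux resolution of $\sigma_2(\PP^1\times\PP^1\times\PP^3)\cong\sigma_2(\PP^3\times\PP^3)$ and then climbs one dimension at a time through the chain $\sigma_2(\PP^1\times\PP^2\times\PP^3)\to\sigma_2(\PP^1\times\PP^3\times\PP^3)\to\sigma_2(\PP^2\times\PP^3\times\PP^3)\to\sigma_2(\PP^3\times\PP^3\times\PP^3)$ via iterated LW-lifting (tracking the last module of each intermediate resolution and checking at the end that it is one-dimensional), you instead identify the secant variety with the subspace variety $\Sub_{2,2,2}$ and apply Weyman's geometric technique in a single shot to the canonical desingularization over the product of Grassmannians. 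Your key observation, which does not appear in the paper, is the crepancy isomorphism $\det S\cong\omega_V$ for the relevant dimension vectors $(2,4,4)$ and $(4,4,4)$; combined with rational singularities of subspace varieties this gives $\omega_X\cong\CC[X]$ up to a grading shift, hence Gorensteinness, without ever writing down the resolution. The numerics check out: for $(2,4,4)$ one has $\det S=\O(-4,-4)=\omega_{\Gr(2,4)^2}$ and $\operatorname{rank}\xi-\codim=24-16+1=9$; for $(4,4,4)$ one has $\det S=\O(-4,-4,-4)=\omega_{\Gr(2,4)^3}$ and $56-44+1=13$, matching Table~\ref{tab:gor}. Your approach is conceptually shorter and makes the failure of Gorensteinness for intermediate formats like $(2,3,4)$ transparent ($\det S=\O(-4,-4)\neq\O(-3,-4)=\omega_V$), whereas the paper's iterative method has the advantage of producing the explicit $G$-equivariant resolution at each stage and of being the same machine used throughout the paper. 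One small slip: $\omega_{\mathcal Z}$ pulls back from the Grassmannian base, so it should be $p^{*}(\omega_V\o\det S^{\vee})$, not $q^{*}$; and the step identifying $\sigma_2=\Sub_{2,2,2}$ scheme-theoretically implicitly uses that the ideal of $\sigma_2$ of a three-factor Segre is generated by the $3\times 3$ minors of flattenings (GSS/Raicu), which you should cite explicitly rather than rely only on ``both sides are reduced.''
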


\begin{remark}
We also conjecture that $\sigma_{2}((\PP^{1})^{\times 5})$ is arithmetically Gorenstein, but we are not able to prove this at present.
\end{remark}

\begin{proof}
Note the matrix cases, $\sigma_2(\PP ^{n-1}\times\PP^{n-1})$,   $\sigma_2(\PP ^1\times\PP ^k) = \PP^{2k+1}$  and   $\sigma_{2}(\PP^{3}\times \PP^{3}) \cong \sigma_{2}(\PP^{1}\times \PP^{1}\times \PP^{3})$  being square are all arithmetically Gorenstein (see for instance \cite{Weyman}[Cor.~6.1.5(c)]).
The socle degree for the ideal $I_{r}$ of $(r+1)\times(r+1)$-minors of an $n\times n$ matrix can be computed in a round-about way as follows. As noted in the proof of Cor.~6.1.5(c)] \cite{Weyman}, the 
last module in the resolution is
\[
S_{(n)^{(n-r)}}A \o S_{(n)^{(n-r)}}B
,\]
which is indexed by square partitions with $n(n-r)$ boxes. In all of the cases we consider, the number of boxes in the last module is equal to one less than the length plus the width of the Betti table of the resolution. Since $I_{r}$ is CM of codimension $(n-r)^{2}$, the length is $(n-r)^{2}$, so the regularity (socle degree) must be $r(n-r)+1$. In the case of rank 2 matrices, the regularity is $2(n-2)+1 = 2n-3$.

Now  consider the 3-factor cases.
We note that $\sigma_{2}(\PP^{1}\times \PP^{1}\times \PP^{3})$ is isomorphic to
\[
\Sub_{2,2,2}(\CC^{2}\o \CC^{2}\o \CC^{4})
.\]
which is, in turn, isomorphic to the variety of $4\times 4$ matrices of rank 2. This ideal is arithmetically Gorenstein since the ideal of $3 \times 3$ minors of a generic $4\times 4$ matrix is generated by the Jacobian ideal of the $4\times 4$ determinant. The resolution is given by the Lascoux resolution whose form and Betti table are:
\[
R(-4) \leftarrow R^{16}(-7) \leftarrow R^{30}(-8) \leftarrow R^{16}(-9) \leftarrow R(-12) \leftarrow 0
\quad
\fbox{$
\begin{matrix}
      &0&1&2&3&4\\
      \text{total:}&1&16&30&16&1\\
      \text{0:}&1&\text{.}&\text{.}&\text{.}&\text{.}\\
      \text{1:}&\text{.}&\text{.}&\text{.}&\text{.}&\text{.}\\
      \text{2:}&\text{.}&16&30&16&\text{.}\\
      \text{3:}&\text{.}&\text{.}&\text{.}&\text{.}&\text{.}\\
      \text{4:}&\text{.}&\text{.}&\text{.}&\text{.}&1\\
      \end{matrix}
$}
\]

Let $A_{1}\cong A_{2} \cong \CC^{2}$, $A_{3}\cong \CC^{4}$. The equivariant version of the Lascoux resolution is (we omit the twists)

\begin{multline*}
\mathbb C 
\leftarrow \bw{3}(A_{1}\o A_{2})\o \bw{3}(A_{3}) \\
\leftarrow \begin{matrix} \bw{4}(A_{1}\o A_{2}) \o S_{2,1,1}A_{3}  \\ \oplus \\S_{2,1,1}(A_{1}\o A_{2}) \o \bw{4} A_{3}\end{matrix}
\leftarrow S_{2,1,1,1}(A_{1}\o A_{2}) \o S_{2,1,1,1}A_{3} 
\\
\leftarrow S_{2,2,2,2}(A_{1}\o A_{2}) \o S_{2,2,2,2}A_{3} 
\leftarrow 0
\end{multline*}
The last module of the resolution is isomorphic to $S_{4,4}A_{1}\o S_{4,4}A_{2}\o S_{2,2,2,2}A_{3}$, which is $\widehat{r}_{j}-r_{j}$ small for $r = (2,2,4)$ and $a=(2,4,4)$, so we can apply the LW-lifting method and lift this resolution to a resolution for $\sigma_{2}(\PP^{1}\times \PP^{3}\times \PP^{3})$. If we go directly from $\sigma_{2}(\PP^{1}\times \PP^{1}\times \PP^{3})$ to $\sigma_{2}(\PP^{1}\times \PP^{3}\times \PP^{3})$ the relevant quotient bundle $\Q_{B}$ would have rank 2, and the bundle $\bw{i+j} \xi$ will no longer be irreducible in the $B$-factor. In order to avoid passing the associated graded bundle, computing cohomology of the associated graded, and then using spectral sequences to reconstruct the cohomology of the original bundle, we prefer to lift one dimension at a time, keeping all quotient bundles rank 1 and keeping all $\bw{d}\xi$ bundles irreducible over the relevant base. 

We will lift the resolution of $\sigma_{2}(\PP^{1} \times \PP^{1}\times \PP^{3})$ to $\sigma_{2}(\PP^{1} \times \PP^{2}\times \PP^{3})$ and then $\sigma_{2}(\PP^{1} \times \PP^{3}\times \PP^{3})$, and further lift to $\sigma_{2}(\PP^{2} \times \PP^{3}\times \PP^{3})$ and then to $\sigma_{2}(\PP^{3} \times \PP^{3}\times \PP^{3})$.

Let $A_{2}' \cong \CC^{3}$ and $A_{2}'' \cong \CC^{4}$.  We wrote a script in LiE to compute the relevant cohomology for all of the following results.
  The strand in the mapping cone construction above $S_{4,4}A_{1}\o S_{4,4}\R_{2}'\o S_{2,2,2,2}A_{3}$ when lifting $A_{2}'$ ends in the module $S_{8,8}A_{1}\o S_{6,5,5}A_{2}' \o S_{4,4,4,4}A_{3}$. Since this is the corner of the square in the mapping cone which will produce a resolution for $\sigma_{2}(\PP^{1} \times \PP^{2}\times \PP^{3})$ we know that this is the last $R$-module in that resolution. Since this module is not 1-dimensional this gives another indication that $\sigma_{2}(\PP^{1} \times \PP^{2}\times \PP^{3})$ is not aG. 

By the same method we lift $S_{8,8}A_{1}\o S_{6,5,5}R_{2}'' \o S_{4,4,4,4}A_{3}$ to produce the last module in the resolution of  $\sigma_{2}(\PP^{1} \times \PP^{3}\times \PP^{3})$, which is the 1-dimensional module $S_{12,12}A_{1}\o S_{6,6,6,6}A_{2}''\o S_{6,6,6,6}A_{3}$. This proves that  $\sigma_{2}(\PP^{1} \times \PP^{3}\times \PP^{3})$ is indeed aG. Since this module is indexed by partitions with $24$ boxes, and the codimension is $16$ the regularity must be $9$.

Now we do the same procedure in the first factor, lifting the resolution of  $\sigma_{2}(\PP^{1} \times \PP^{3}\times \PP^{3})$ to a resolution of   $\sigma_{2}(\PP^{2} \times \PP^{3}\times \PP^{3})$ and then to a resolution of  $\sigma_{2}(\PP^{3} \times \PP^{3}\times \PP^{3})$. Let $A_{1}' \cong \CC^{3}$ and $A_{1}'' \cong \CC^{4}$. 
We find that $S_{12,12}R_{1}\o S_{6,6,6,6}A_{2}''\o S_{6,6,6,6}A_{3}$ lifts to the 3-dimensional module $S_{14,13,13}A_{1}'\o S_{10,10,10,10}A_{2}''\o S_{10,10,10,10}A_{3}$. And then
$S_{14,13,13}R_{1}'\o S_{10,10,10,10}A_{2}''\o S_{10,10,10,10}A_{3}$ lifts to 
the 1-dimensional module
$S_{14,14,14,14}A_{1}''\o S_{14,14,14,14}A_{2}''\o S_{14,14,14,14}A_{3}$. This proves that $\sigma_{2}(\PP^{3}\times \PP^{3}\times \PP^{3})$ is aG.
Since this module is indexed by partitions with $56$ boxes, and the codimension is $44$ the regularity must be $13$.
\end{proof}

\begin{appendix}
\part*{Appendix: Tensors, Weyman's geometric technique, and MCM modules}
\section{Orbit closures in tensor spaces}\label{sec:orbits}
Our indexing convention is as if a vector space is defined without indices, we get an analogous object adding indices.  An underlined vector space such as $\u{A}$ will indicate the trivial vector bundle with fiber $A$.
Define $A^{*}$ to be a vector space (over $\CC$) of dimension $a$.\footnote{We prefer to work with the dual vector space so that the functions on these spaces will not have the $^{*}$'s.} 
Then for each $j$, $A_{j}^{*}$ is defined as a vector space of dimension $a_{j}$. Let $[n]$ denote the multi-index $\{1,\dots,n\}$, and let $I =[i_{1},\dots,i_{s}] \subset [n]$. Then we have compact notations $A_{[n]}^{*}:=\AA$ and $A_{I}^{*}:=A_{i_{1}}^{*}\o\dots\o A_{i_{s}}^{*}$ and these extend to Schur modules as $(S_{\pi}A)_{[n]}:= S_{\pi^{n}}A_{1}\o \dots \o S_{\pi^{n}}A_{n}$.

If a group $G$ acts on $\PP^{N}$, an algebraic variety $Y\subset \PP^{N}$ is said to be a \emph{$G$-variety} if $G.Y = Y$.  A particular type of $G$-variety is an \emph{orbit closure}, which is a variety $\overline{G.x}$ for some $[x]\in \PP^{N}$. Such a point $x$ is called a \emph{normal form} for $X$.\footnote{Notice a point  $[x]\in \PP^{N}$ can be considered the $\CC^{*}$ orbit closure of a non-zero point $x$ in affine space. By considering the $GL$-action instead of the $SL$-action our normal forms will be points in affine space.}
Herein we are most interested in $G$-varieties and orbit closures for the group $\GLA$ acting naturally on $\AA$ by change of coordinates in each factor.

Classically there has been much interest in classification problems related to counting orbits. For example, Kac \cite{Kac80, Kac85} determined which pairs of vector spaces and group actions produce finitely many orbits, and Vinberg and coauthors produced lists of orbits in many cases (see \cite{Vinberg-Elasvili, Nurmiev, Nurmiev2} for example).  More recently, Buczy{\'n}ski and Landsberg found expressions of normal forms for all orbits in the third secant variety  \cite{BucLan_ranks}.  These results for tensors are collected in \cite[Ch.~10]{LandsbergTensorBook}.

\subsection{Classical varieties as orbit closures}
Indeed, many interesting algebraic varieties can be described as orbit closures.

For example, the Segre variety, denoted $\Seg(\PPA)$, of (lines through) rank-one tensors in
 $\PP (\AA)$ is the (already closed) orbit $\GLA$ of a point $[\xx]$, where $0\neq x_{i}\in A_{i}^{*}$.  More generally, closed orbits are called \emph{homogeneous varieties} and include other classical varieties such as the Grassmannian,
flag varieties,
Veronese varieties
and 
Segre-Veronese varieties.
In general, if $Y \subset \PP V$ is a variety, $\w{Y}$ will denote the cone over $Y$ in $V$.

Tangential varieties of Segre products, denoted $\tau(\PPA)$ are orbit closures:
\[
\w{\tau}(\PPA) = \overline{
\GLA. \begin{pmatrix} 
\phantom{+} 
 y_{1}\o x_{2} \o\dots\o x_{n}
\\ 
+ x_{1}\o y_{2}\o x_{3}\dots\o x_{n}
\\
\vdots
\\
+ x_{1}\o\dots\o x_{n-1}\o y_{n}
 \end{pmatrix}}
,\]
with $\texttt{span}\{x_{i},y_{i}\}$ maximal in  $A_{i}^{*}$.

Many secant varieties of Segre products,  $\sigma_{k}(\PPA)$, are orbit closures. For example, if $a_{i}\geq k$, then 
$\sigma_{k}(\PPA)$ is the orbit closure of the tensor with 1's on the super-diagonal and 0's elsewhere. 
When the dimensions $a_{i}$ are smaller than $k$ it is not immediately obvious how to find a normal form for the secant variety $\sigma_{k}(\PPA)$ or even when the secant variety is still the closure of a single orbit.  Indeed, because for large enough $k$ the secant variety fills the ambient space, if the dimension of the group is smaller than the dimension of the ambient space, then an orbit closure cannot be the entire space.

On the other hand,
the affine secant variety $\w{\sigma_{k}}(\PPA)$ may be parametrized as (the closure of) the composition of the Segre map with the summation map:
\[\xymatrix{
(\tA)^{\times k} \ar[r]^{Seg}& (\AA)^{\times k} \ar[r]^{sum}& \AA
},\]
which shows in particular that the variety is irreducible, also, we expect that the dimension of the affine secant variety will then be $\min\{k\sum_{j}a_{j}, \prod_{j}a_{j} \}$.
So if we allow a normal form to depend on enough parameters, we can always parameterize a secant variety by the closure of the orbit of such a parametrized normal form.

\subsection{Stability for normal forms}
A fundamental family of varieties in $\PP (\AA)$ are subspace varieties.
The \emph{subspace variety}, $\SubA$ is the (affine) variety of all points $T \in \AA$ such that there exist subspaces $A_{j}'^{*}\subset A_{j}^{*}$ of dimensions $r_{j}$ respectively and $T\in \AAp$.

It is well-known \cite{Landsberg-Weyman-Bull07} that the ideals of the subspace varieties are generated by minors of flattenings, whose definition we now recall.
For a multi-index $I = (i_{1},\dots,i_{s})$ let $A_{I}$ denote the tensor product $\bigotimes_{j=1}^{s}A_{i_{j}}$. 
If $T \in A_{1}^{*}\o\dots\o A_{n^{*}}$, a \emph{flattening} is a linear map induced from $T$ of the form $A_{J}\to A_{[n] \setminus J}^{*}$.
Landsberg and Weyman \cite[Thm~3.1]{Landsberg-Weyman-Bull07} proved that the ideal of the $\SubA$ is generated by all $(r_{i}+1)\times (r_{i}+1)$ minors of the flattenings $A_{i}\to A_{[n]\setminus \{i\}}^{*}$.
As modules Landsberg and Weyman's result says that
\[\I(\SubA )= \left\langle 
\sum_{j=1}^{n} \bw{r_{j}+1} A_{j}
\o \bw{r_{j}+1}(A_{[n]-j})
\right\rangle.
\]

Now we have the language to discuss stability of orbit closures. 
For each $j$ choose a subspace   $A_{j}'^{*}\subset A_{j}^{*}$ of dimension $r_{j}$. Suppose we have a normal form $x\in \AAp$ for $\sigma_{k}(\PPAp) \subset \PP (\AA)$ (with the $\GLAp$-action). If  $x$ is also a normal form for $\sigma_{k}(\PPA)$ (with the $\GLA$-action)  we say that the normal form \emph{stabilizes}.
If the secant variety $\sigma_{k}(\PPAp)$
is not all of $\PP (\AAp)$ then the normal form stabilizes if and only if $r_{j}\geq k $ for every $j$.

\begin{example}
Consider the family of third secant varieties together with normal forms in Table \ref{tab:sig3}, which is adapted from  \cite[Ch.~10]{LandsbergTensorBook}. We assume that the $x_{i}$ (respectively $y_{i}$ and $z_{i}$) are linearly independent.
\begin{table}
\[
\begin{array}{|c|c|c|}
\hline
\text{orbit closure} &\text{normal form} & \text{group acting}
\\
\hline
\sigma_{3}(\PP^{a_{1}-1}\times \PP^{a_{2}-1}\times \PP^{a_{3}-1})
& x_{1}\o y_{1}\o z_{1} + x_{2}\o y_{2}\o z_{2}+ x_{3}\o y_{3}\o z_{3}
& \GL(a_{1})\times \GL(a_{2})\times \GL(a_{3})
\\
\hline
\sigma_{3}(\PP^{1}\times \PP^{a_{2}-1}\times \PP^{a_{3}-1})
& x_{1}\o (y_{1}\o z_{1} + y_{2}\o z_{2}) +  x_{2}\o (y_{2}\o z_{2} + y_{3}\o z_{3}) 
& \GL(2)\times \GL(a_{2})\times \GL(a_{3})
\\
\hline
\sigma_{3}(\PP^{1}\times \PP^{1}\times \PP^{a_{3}-1})
& x_{1}\o (y_{1}\o z_{1} + y_{2}\o z_{2}) +  x_{2}\o (y_{1}\o z_{2} + y_{2}\o z_{3}) 
& \GL(2)\times \GL(2)\times \GL(a_{3})
\\
\hline
\end{array}
\]
\caption{Some orbit closures for the third secant variety for $a_{1},a_{2},a_{3}\geq 3$ }\label{tab:sig3}
\end{table}

These three varieties do not have the same normal form.
In particular, consider each normal form in $A_{1}^{*}\o A_{2}^{*} \o A_{3}^{*}$ and let $\GL(a_{1})\times \GL(a_{2})\times \GL(a_{3})$ act, there will be a strict containment of varieties
\begin{multline*}
(\GL(a_{1})\times \GL(a_{2})).
\sigma_{3}(\PP^{1}\times \PP^{1}\times \PP^{a_{3}-1})
\\
\subset
\GL(a_{1}).
\sigma_{3}(\PP^{1}\times \PP^{a_{2}-1}\times \PP^{a_{3}-1})
\\
\subset
\sigma_{3}(\PP^{a_{1}-1}\times \PP^{a_{2}-1}\times \PP^{a_{3}-1})
.\end{multline*}
\end{example}
Subspace varieties aid the study of geometric and algebraic properties of these types of orbit closures.

\subsection{Lifting orbits}

The reference for this Section is \cite{Weyman, Landsberg-Weyman-Bull07}.  We emphasize, we are repeating what is in \cite[Section~5]{Landsberg-Weyman-Bull07}.

Suppose $A$ is a vector space of dimension $a$, and consider the Grassmannian
$\Gr(r,A^{*})$. We have the tautological sequence of vector bundles on $\Gr(r,A^{*})$:
 \[\xymatrix{
 0\ar[r] &\R \ar[r] & \u A \ar[r] & \Q \ar[r]& 0
 }
, \]
where $\R$ and $\Q$ are the subspace and quotient bundles (of ranks $r$ and $a-r$ respectively) and $\u A$ is the trivial vector bundle with every fiber isomorphic to $A$.

Subspace varieties have nice desingularizations:
\begin{equation}\label{eq:desing}
\xymatrix{
\R_{1}\o \R_{2}\o \dots\o \R_{n} 
\ar[d] \ar[dr]\\
\Gran& \Sub_{r_{1},\dots,r_{n}}(\AA)
.}
\end{equation}

In particular, we see immediately that
\[
\dim(\SubA) = r_{1}r_{2}\cdots r_{n} + \sum_{j=1}^{n}r_{j}(a_{j}-r_{j})
.\]

Let $G:=\GLA$, 
suppose we have an algebraic variety $Y\subset \PP( \AAp)$ and consider the orbit closure $\overline{G.Y}$.
The desingularization \eqref{eq:desing} can be used to give a partial desingularization of $\overline{G.Y}$.
We are most often interested in the case when $Y$ is an orbit closure or even just invariant under the action of $\GLAp$ for subspaces $A'^{*}_{j}\subset A_{j}^{*}$.

Again consider the bundle 
$\R_{1}\o \dots \o \R_{n}$, with total space $\tilde Z$. Each fiber over $(A'^{*}_{1},\dots,A'^{*}_{n}) \in \Gran$ is $\AAp$. 


Let $Z \subset \tilde Z$ be the subvariety such that the fibers $Z_{A'^{*}_{1},\dots,A'^{*}_{n}} \cong Y$.
Then $Z$ gives a partial desingularization of the big orbit closure $\overline{G.\w{Y}}$:
\[
\xymatrix{
\R_{1}\o \R_{2}\o \dots\o \R_{n} 
\ar[d] \ar[dr]\\
\Gran & \overline{G.\w{Y}} &\subset& \SubA
.}
\]
\begin{remark}
Landsberg and Weyman used this construction with $Y =  \sigma_{r}(\PP^{r-1}\times \dots\times \PP^{r-1})$.
\end{remark}
Now $\overline{G.Y}$ is birational to $\Gran \times Y$  so the dimension of $\overline{G.Y}$ is 
\begin{equation}
\dim(\overline{G.Y}) =  
\sum_{j=1}^{n}r_{j}(a_{j}-r_{j})
+
\dim(Y).
\end{equation}
We highlight the following fact, which is a straightforward generalization of \cite[Prop.~5.1]{Landsberg-Weyman-Bull07}, and note that Landsberg and Weyman's proof still holds in this generality:
\begin{prop}
The ideal of $\overline{G.Y}$ is generated by the equations of $\SubA$, i.e. the appropriate minors of flattenings together with the equations inherited from $Y$. 
\end{prop}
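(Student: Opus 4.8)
The plan is to pull back along the partial desingularization $Z \to \overline{G.\w Y}$ and use the same cohomological vanishing argument that underlies the classical proof (\cite[Prop.~5.1]{Landsberg-Weyman-Bull07}, which in turn follows \cite{Weyman}). First I would observe that the composition $Z \to \overline{G.\w Y} \hookrightarrow \SubA \hookrightarrow \AA$ factors the inclusion of $\overline{G.\w Y}$ into the subspace variety, so every equation vanishing on $\SubA$ — that is, every appropriate minor of flattenings, by the Landsberg--Weyman description $\I(\SubA) = \langle \sum_j \bw{r_j+1}A_j \o \bw{r_j+1}(A_{[n]-j}) \rangle$ recalled above — automatically vanishes on $\overline{G.\w Y}$. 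Likewise, the $G$-translates of the equations defining $\w Y$ inside $\AAp$ extend to $G$-invariant functions vanishing on $\overline{G.\w Y}$, since a function vanishing on $\w Y$ and on all its $G$-translates vanishes on the orbit and hence on its closure. So the ideal $J$ generated by these two families satisfies $J \subseteq \I(\overline{G.\w Y})$, and the content of the proposition is the reverse containment.

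For the reverse containment I would argue that modulo the minors of flattenings — i.e. working in $\CC[\SubA]/(\text{minors})$, which is the coordinate ring of the subspace variety since those minors cut it out scheme-theoretically and it is reduced (indeed it is normal with the rational resolution \eqref{eq:desing}) — the pushforward of $\O_Z$ computes $\CC[\overline{G.\w Y}]$. Concretely, $Z$ is the total space of the fiber bundle over $\Gran$ whose fiber is the affine cone $\w Y$; its pushforward to $\SubA$ has image the subspace variety, and pushing the fiberwise equations of $\w Y$ through, the ideal of $Z$ as a subscheme of $\R_1 \o \dots \o \R_n$ is generated by (the pullbacks of) the equations of $\w Y$. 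Because the desingularization \eqref{eq:desing} has $R^{i}\pi_* \O_Z = 0$ for $i>0$ (this is exactly the hypothesis that makes Weyman's geometric technique applicable, and is what one checks in \cite[Sec.~5]{Landsberg-Weyman-Bull07} via Bott's algorithm on $\Gran$) and $\pi_* \O_Z = \CC[\overline{G.\w Y}]$, the section ring of $Z$ is generated by the two stated families of equations. Thus $\I(\overline{G.\w Y}) = J$, and I would note — as Landsberg and Weyman do — that their original proof goes through verbatim in this generality because $Y$ need only be $\GLAp$-invariant, never specifically $\sigma_r(\PP^{r-1}{}^{\times n})$.

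The main obstacle, and the step that requires care rather than a direct quotation, is verifying that the partial desingularization $Z \to \overline{G.\w Y}$ is genuinely birational with the expected fiber structure and that higher direct images vanish, since $Y$ is now an arbitrary $\GLAp$-variety rather than a secant variety with a known resolution. The birationality follows from the dimension count $\dim \overline{G.Y} = \sum_j r_j(a_j - r_j) + \dim Y$ together with the observation that over the open locus of $\Gran$ where the flattening ranks are exactly $(r_1,\dots,r_n)$ the map is one-to-one; the vanishing of $R^i\pi_*\O_Z$ reduces to a Bott-algorithm computation on $\Gran$ of the cohomology of the bundles $S_\pi(\R_1 \o \dots \o \R_n)$ occurring in the fiberwise resolution of $\w Y$, which is precisely the mechanism described in Appendices \ref{sec:weyman}--\ref{sec:Bott}. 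Granting those inputs — all of which are already present in the machinery this paper sets up — the proposition is then a formal consequence, and I would keep the write-up to a short paragraph pointing to \cite[Prop.~5.1]{Landsberg-Weyman-Bull07} and indicating only where the generalization from a fixed $Y$ to an arbitrary invariant $Y$ enters.
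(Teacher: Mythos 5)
Your overall strategy is the same one the paper implicitly invokes: it simply cites \cite[Prop.~5.1]{Landsberg-Weyman-Bull07} and asserts Landsberg and Weyman's proof carries over, and you correctly identify that proof as pulling back along the partial desingularization $Z\to\overline{G.\w Y}$, using the normality and rational-singularities machinery of Weyman's technique, and reading off the ideal from the pushforward of $\O_Z$.

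There is, however, one step in your write-up that is not right as stated. For the containment $J\subseteq\I(\overline{G.\w Y})$ you argue that the $G$-translates of the equations defining $\w Y$ ``extend to $G$-invariant functions vanishing on $\overline{G.\w Y}$,'' on the grounds that a function vanishing on $\w Y$ and all its $G$-translates vanishes on the orbit closure. The conclusion of that implication is trivially true, but the hypothesis is not satisfied by the functions in question: if $f$ vanishes on $\w Y$ (extended to $\CC[\AA]$ via any splitting $A^*=A'^*\oplus (A')^{\perp}$), then $g\cdot f$ vanishes on $g\cdot\w Y$ but has no reason to vanish on $g'\cdot\w Y$ for $g'\neq g$, so the individual translates $g\cdot f$ are \emph{not} in $\I(\overline{G.\w Y})$. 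What is actually needed for the easy containment is that the \emph{inherited $G$-module} $S_{\pi}A$ (obtained by replacing $S_{\pi}A'$ with $S_{\pi}A$) lies in the ideal, and this itself is a non-trivial fact about how multiplicity spaces in $\CC[\SubA]\cong H^0(\Gran,\Sym(\eta))$ match those in $\CC[\AAp]$ under restriction. The cleanest route — and the one consistent with the rest of your sketch — is to observe that an inherited equation pulls back to a global section of a Schur bundle on $\Gran$ that vanishes on $Z$ fiber by fiber, hence vanishes on the image $\overline{G.\w Y}$; you set up the geometry to do exactly this but then substitute the incorrect translation argument. With that repaired, the hard containment via $\pi_*\O_Z=\CC[\overline{G.\w Y}]$ and vanishing of higher direct images is the argument the paper has in mind, and your description of where the Bott-algorithm input and the birationality/dimension count enter is accurate.
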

Recall that the process of ``inheritance'' is taking every module $S_{\pi}A'$ in a generating set for $\I(Y)$ and replacing it with $S_{\pi}A$ to get a module in $\I(\overline{G.Y})$.

In the next section we will recall how Weyman's ``geometric technique'' exploits this partial desingularization to get information about the minimal free resolution of $\overline{G.Y}$ from the minimal free resolution of $Y$.

\section{Obtaining resolutions via Weyman's Technique}\label{sec:weyman}
Landsberg and Weyman showed that under a minor technical hypothesis, the aCM property is inherited, from $\sigma_{r}(\PP^{r-1}\times\dots\times \PP^{r-1})$
to $\sigma_{r}(\PP A^{*}_{1}\times \dots \times \PP A_{n}^{*})$ when $\dim A_{j} \geq r$  for $1 \leq j \leq n$, \cite[Lem.~5.3]{Landsberg-Weyman-Bull07}.
 But their result does not say anything about the cases when the dimensions of $A_{j}$ are  \emph{smaller} than $r$. 
 It turns out that their same argument can be applied to any normal orbit closure that is partially resolved by a subspace variety. 

The goal of the next several sections is to prove a
 generalization of \cite[Lem.~5.3]{Landsberg-Weyman-Bull07}, which is Theorem~\ref{thm:workhorse} below.

Here we recall some definitions from \cite{Eisenbud_tome}.
Let $S$ denote the coordinate ring $\CC[A]$. Recall a \emph{minimal free resolution} of $\CC[X]$ is a complex of $S$-modules:
\[
F_{0}\leftarrow F_{1}\leftarrow\dots \leftarrow F_{p}\leftarrow 0
\]
such that $\CC[X]$ is the cokernel of the map $F_{0}\leftarrow F_{1}$.
We say that $X$ is \emph{arithmetically Cohen-Macaulay} (aCM) if the length $p$ of a minimal free resolution of $\CC[X]$ is equal to the codimension of $X$. 
 
The homogeneous coordinate ring $\CC[\AAn]$ has an isotypic decomposition by $G$-modules \cite[Prop.~4.1]{Landsberg-Manivel04} 
\[
\CC[\AAn] = \bigoplus_{d} \bigoplus_{\pi\vdash d} S_{\pi}A \o \CC^{m_{\pi}}
,\]
where $S_{\pi}A:=S_{\pi^{1}}A_{1}\o\dots\o S_{\pi^{n}}A_{n}$ denote  Schur modules indexed by multi-partitions $\pi = (\pi^{1},\dots,\pi^{n})$ with each 
$\pi^{j}=(\pi^{j}_{1},\dots,\pi^{j}_{a_{j}})$ a non-increasing sequence of integers and  $\CC^{m_{\pi}}$ denotes the multiplicity space of dimension equal to the multiplicity $m_{\pi}$ of the representation $S_{\pi}A\subset S^{d}(A)$.  Thus every $G$-equivariant resolution also has an isotypic decomposition.

We will say that a variety $Y\subset\PP( \AA)$ is a $G$-variety for $G=\GLA$ if $Y$ is left invariant under the action of $G$. We will consider $G$-equivariant free resolutions whose free $\CC[\AAn]$-modules also carry the structure of $G$-modules and are indexed by multi-partitions. 

We will use the following version of Weyman's Theorem~\cite{Weyman}, combined with Prop 2.2 of \cite{LanWey_secant_CHSS}:

\begin{theorem}[{\cite[Thm.~5.1.2]{Weyman},\cite[Prop.~2.2]{LanWey_secant_CHSS}}]
Suppose $Y \subset \PP V$ is a $G$-variety and consider the homogeneous variety $\B = G/P$. Suppose $q\colon \u{E}\to \B$ is a sub bundle of the trivial bundle $\u{V}$ and that $q\colon \PP E \to Y$ is a desingularization of $Y$.  Write $\eta = \u{E}^{*}$ and $\xi = \u{V} / \u{E}$. 

Suppose $\eta$ is  induced from an irreducible $P$-module. Then
\begin{itemize}
\item $\w{Y} $ is normal with rational singularities.
\item The coordinate ring $\CC[\w{Y}] \simeq H^{0}(\B, S^{d}\eta)$.
\item The vector space of minimal generators of the ideal of $Y$ in degree d is isomorphic to $H^{d-1}(\B, \bw{d}\xi)$.
\item More generally, $\bigoplus_{j} H^{j}(\B, \bw{i+j}\xi)$ is isomorphic (as $G$-modules) to the $i$-th term in the minimal free resolution of $Y$.
\end{itemize}
\end{theorem}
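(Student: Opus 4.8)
The plan is to run Weyman's ``geometric technique'' \cite[Ch.~5]{Weyman} on the desingularization at hand, with the irreducibility of $\eta$ as the ingredient that upgrades ``a resolution'' to ``the minimal resolution'' and supplies normality. First I would assemble the incidence picture: over $\B=G/P$ form the total space $Z$ of the subbundle $\u E\subset \u V$, so that $Z\subset V\times\B$ fits in a diagram with $p\colon Z\to V$ (whose image is the cone $\w Y$; since $q\colon\PP E\to Y$ is a desingularization, $p$ is proper and birational onto $\w Y$) and $q\colon Z\to\B$ the bundle projection. Because $\u E$ is a \emph{sub}bundle, $Z$ is cut out in $V\times\B$ fiberwise by a regular sequence of length $t=\dim V-\rank\u E$, namely the image of $\xi^{*}\subset\u{V^*}$; hence the Koszul complex built from $\xi$ resolves $\O_{Z}$ as a module over $\O_{V\times\B}=S\boxtimes\O_{\B}$, where $S=\CC[V]=\Sym(V^{*})$.

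Next I would push this Koszul complex forward to a point. Taking $Rq_{*}$ term by term (equivalently, the hypercohomology of the Koszul complex over $\B$) produces a complex $F_{\bullet}$ of graded free $S$-modules with $F_{i}=\bigoplus_{j}H^{j}\!\big(\B,\bw{i+j}\xi\big)\otimes S(-i-j)$, whose homology computes $H^{*}\big(Rp_{*}\O_{Z}\big)$. Here the hypothesis that $\eta$ is induced from an irreducible $P$-module does the heavy lifting via Bott's algorithm and Kempf vanishing on $\B$: it forces $R^{>0}p_{*}\O_{Z}=0$ together with $p_{*}\O_{Z}=\bigoplus_{d}H^{0}(\B,S^{d}\eta)$, so that $\w Y$ is normal with rational singularities, $\CC[\w Y]_{d}\simeq H^{0}(\B,S^{d}\eta)$, and $H^{j}(\B,\bw{j}\xi)=0$ for $j>0$, so $F_{0}=S$ and $F_{\bullet}$ is an honest $G$-equivariant graded free resolution of $\CC[\w Y]$. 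This is the content of \cite[Thm.~5.1.2]{Weyman} together with its normality and rational-singularity clauses.

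Finally I would argue that $F_{\bullet}$ is in fact \emph{minimal}, which is \cite[Prop.~2.2]{LanWey_secant_CHSS}. Because every bundle occurring is built from irreducible $P$-modules, Bott's algorithm pins down precisely which $G$-modules and internal degrees appear in each $F_{i}$, and a degree (equivalently weight) comparison shows that the generators of $F_{i}$ and $F_{i+1}$ never share an internal degree in a way permitting a unit entry in the differential; hence no split subcomplex can be removed and $F_{\bullet}$ is the minimal free resolution. Granting minimality, the remaining two bullets are bookkeeping: $F_{i}=\bigoplus_{j}H^{j}(\B,\bw{i+j}\xi)$, and in particular the minimal generators of $\I(Y)$ in degree $d$ are the $j=d-1$ summand of $F_{1}$, namely $H^{d-1}(\B,\bw{d}\xi)$.

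The step I expect to be the genuine obstacle is minimality. Everything up to the construction of $F_{\bullet}$ is ``turn the crank'' on Weyman's machine, and even normality with rational singularities is a packaged consequence of $\eta$ being irreducible; but verifying that the equivariant complex carries no unit entries in this generality---for an arbitrary $G$-variety $Y$ partially resolved in this way, not merely $\sigma_{r}$ of a Segre product---requires the careful weight bookkeeping of \cite[Prop.~2.2]{LanWey_secant_CHSS}, and the main care is in checking that its hypotheses are genuinely satisfied, which is again exactly where the irreducibility of $\eta$ is used.
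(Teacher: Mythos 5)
The paper does not prove this statement; it is quoted as a black box from Weyman \cite{Weyman} (Thm.~5.1.2, together with the surrounding 5.1.3) and Landsberg--Weyman \cite{LanWey_secant_CHSS} (Prop.~2.2), so there is no in-paper proof to compare against. Your sketch is a faithful summary of the argument in those sources: form the incidence variety $Z$ (the total space of $\u{E}$), resolve $\O_{Z}$ over $\O_{V\times\B}$ by the Koszul complex on $\xi$, push forward to a point to get $F_{\bullet}$ with $F_{i}=\bigoplus_{j}H^{j}(\B,\bw{i+j}\xi)\otimes S(-i-j)$, use Bott to obtain the vanishings that make $F_{\bullet}$ an honest free resolution together with rational singularities and normality, and then establish minimality. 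You also correctly single out minimality as the piece with real content, and correctly attribute it to \cite[Prop.~2.2]{LanWey_secant_CHSS}.

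Two small corrections to the write-up. First, $p_{*}\O_{Z}=\bigoplus_{d}H^{0}(\B,S^{d}\eta)$ is unconditional (it is just global sections of $\Sym\eta$ on $\B$); what irreducibility of $\eta$ buys, via Bott, are the vanishings $H^{>0}(\B,S^{d}\eta)=0$ (rational singularities) and $H^{j}(\B,\bw{k}\xi)=0$ for $j>k$, which force $F_{<0}=0$ and $F_{0}=S$; normality of $\w{Y}$ then follows from the resulting surjection $S\twoheadrightarrow p_{*}\O_{Z}$ combined with $p$ being proper and birational (so that $p_{*}\O_{Z}$ is the normalization). Second, the phrase that generators of $F_{i}$ and $F_{i+1}$ ``never share an internal degree'' is not right: in internal degree $d$ one has $H^{d-i}(\B,\bw{d}\xi)$ in $F_{i}$ and $H^{d-i-1}(\B,\bw{d}\xi)$ in $F_{i+1}$, and both can be nonzero. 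The correct mechanism in \cite[Prop.~2.2]{LanWey_secant_CHSS} is that, decomposing $\bw{d}\xi$ into irreducible homogeneous bundles, Bott concentrates the cohomology of each irreducible summand in a single degree, so the two groups above arise from \emph{disjoint} sets of summands, and one then compares weights to rule out the same irreducible $G$-module appearing (with multiplicity interaction) in both. Since these points are stated rather than proved in your sketch and you explicitly defer to the cited proposition, I would not call this a gap, only a place to tighten the exposition.
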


We will say that a $G$-variety $Y$ has an \emph{$(s_{j})$-small resolution} if for every module $S_{\pi}A$ occurring in the resolution has the property that for each $j$ the first part of $\pi^{j}$ is not greater than $s_{j}$.  Let $\w{a_{j}}:=\frac{a_{1}\dots a_{n}}{a_{j}}$ and similarly $\w{r_{j}} :=\frac{r_{1}\dots r_{n}}{r_{j}}$.

\begin{theorem}\label{thm:workhorse} Suppose $G = \GLA$ and $G' = \GLAp$.
If $Y$ is a $G'$-variety that is arithmetically Cohen-Macaulay with a resolution 
 that is $(\w{r_{j}} -r_{j})$-small 
 for every $j$ for which $0<r_{j} <a_{j}$,
then $\overline{G.Y}$ is arithmetically Cohen-Macaulay. 
Moreover 
we obtain a (not necessarily minimal) resolution of $\overline{G.Y}$ that is $(s_{j})$-small with
$s_{j} = 
\max_{\pi}
\begin{cases}
 \w{a_{j}}-r_{j}, & \text{ if } r_{j}<a_{j} \\
 \w{a_{j}} - \w{r_{j}}+\pi^{j}_{1}, & \text{ if }r_{j}=a_{j},
\end{cases}$
where the $\max$ is taken over all multi-partitions $\pi$ occurring in the original resolution of $\CC[Y]$.
\end{theorem}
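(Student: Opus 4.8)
The plan is to follow Landsberg and Weyman's argument from \cite[Lem.~5.3]{Landsberg-Weyman-Bull07} essentially verbatim, replacing their specific variety $\sigma_r(\PP^{r-1}\times\dots\times\PP^{r-1})$ by the general $G'$-variety $Y$, and tracking carefully how the partition sizes grow under the construction. First I would set up the partial desingularization from Appendix~\ref{sec:orbits}: let $\B = \Gran$ (a product of Grassmannians, hence $G/P$ for a parabolic $P$), let $\eta = (\R_1\o\dots\o\R_n)^*$ restricted appropriately so that $\PP E \to \overline{G.\w Y}$ is a desingularization with fibers $\w Y$, and let $\xi = \u{(\AA)}/\u E$. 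The point is that $\eta$ is induced from an irreducible $P$-module precisely because each $\R_j$ is irreducible over its Grassmannian factor, so Weyman's Theorem (the version quoted just before the statement) applies and gives $\bigoplus_j H^j(\B,\bw{i+j}\xi)$ as the $i$-th term of a resolution of $\overline{G.Y}$.

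The core computation is then to understand $\bw{m}\xi$ as a $G$-equivariant bundle on $\B$. One has a filtration of $\u{(\AA)}$ coming from the $\R_j\subseteq\u{A_j^*}$, so $\xi$ has an associated graded that is a sum of tensor products of terms $\R_I^* \o Q_J$ where $Q_j = \u{A_j^*}/\R_j$ (here I am suppressing which factors are "sub" and which are "quotient"). Taking exterior powers and then twisting by $S^d\eta = \bigoplus_\pi S_\pi\R$ (a sum over the multi-partitions $\pi$ occurring in $\CC[Y]$, since $\CC[\w Y] = H^0(\B,S^d\eta)$ decomposes that way), one gets bundles of the form $(S_{\pi^j}\R_j \text{ or } S_{\mu}Q_j)$-type tensor products whose cohomology is computed factor-by-factor via Bott's algorithm on each Grassmannian (this is the content of Appendix~\ref{sec:Bott}). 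The partition-size bookkeeping is exactly where the two cases in the statement come from: in a factor $j$ with $r_j < a_j$ the quotient bundle $Q_j$ is nontrivial of rank $a_j - r_j$, and the relevant exterior powers contribute a partition whose first part is bounded by the number of available "slots," which after twisting works out to $\w{a_j} - r_j$; in a factor with $r_j = a_j$ there is no quotient bundle, $\R_j = \u{A_j^*}$, and the partition in that factor is just inherited from $Y$ and shifted, giving $\w{a_j} - \w{r_j} + \pi^j_1$. The smallness hypothesis $(\w{r_j}-r_j)$-small on $Y$ for $0 < r_j < a_j$ is precisely what guarantees that, when Bott's algorithm is applied in such a factor, no cohomology-cancelling "singular weight" obstruction occurs and the term lands in the expected homological degree, so the mapping-cone / spectral-sequence argument collapses to give a resolution of the same length.

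The aCM conclusion then follows formally: $\overline{G.Y}$ is birational to $\Gran\times Y$, so its codimension in $\PP(\AA)$ is $\codim_Y + \sum_j (a_j)(\text{stuff}) - \dots$ — more precisely one checks the codimension equals $\codim(Y) + \codim(\SubA)$ — and the resolution produced by the iterated mapping cone (Appendix~\ref{sec:cone}) has length equal to this codimension because (i) $Y$ being aCM means its own resolution has length $\codim Y$, and (ii) the subspace-variety factor contributes a Lascoux-type resolution of length $\codim(\SubA)$ with no overlap, so the cone has length exactly the sum. The $(s_j)$-smallness of the output is read off from the partition bounds established in the previous paragraph, maximized over the $\pi$ appearing in $\CC[Y]$.

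The main obstacle I anticipate is the cohomology vanishing step: showing that for every $\bw{i+j}\xi$ twisted by every $S_\pi\R$ arising from $\CC[Y]$, Bott's algorithm in each factor $j$ with $0 < r_j < a_j$ produces cohomology in exactly one degree and that the resulting contribution sits in the correct spot of the complex so that the mapping cone does not lengthen. This is where the precise numerology of "$(\w{r_j}-r_j)$-small" has to be matched against the weights $(\rho + \text{weight})$ that Bott's algorithm sorts, and it is the part of Landsberg–Weyman's proof that must be reproduced with care in the Appendices rather than invoked as a black box; everything else (the desingularization, the application of Weyman's theorem, the codimension count, the final length comparison) is comparatively routine.
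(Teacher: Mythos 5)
Your proposal has the right ingredients but a key architectural confusion that would derail the proof. You propose applying Weyman's Theorem so that $\bigoplus_j H^j(\B,\bw{i+j}\xi)$ becomes the $i$-th term of a resolution of $\overline{G.\w{Y}}$, with $\PP E\to\overline{G.\w{Y}}$ a desingularization ``with fibers $\w{Y}$.'' But that theorem requires $E$ to be a vector sub-bundle of the trivial bundle with fiber $A_1^*\o\dots\o A_n^*$; since $\w{Y}$ is not a linear subspace, no such $E$ exists. The bundle $\eta^* = \R_1\o\dots\o\R_n$ desingularizes the \emph{subspace variety} $\SubA$, not $\overline{G.\w{Y}}$, and correspondingly $H^0(\B,S^d\eta)$ is the degree-$d$ piece of $\CC[\SubA]$, not of $\CC[\w{Y}]$; the multi-partitions $\pi$ in the theorem statement come from the $G'$-isotypic decomposition of $\CC[Y]$, not from symmetric powers of $\eta$. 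The desingularization of $\overline{G.\w{Y}}$ by $Z\subset\tilde Z$ is only a \emph{partial} one, and Weyman's theorem cannot be invoked on it as a black box.

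The paper's argument is two-layered, and the decisive intermediate step is absent from your sketch. One takes the given $G'$-equivariant resolution $E_\bullet$ of $\CC[Y]$, sheafifies it by replacing each $S_{\pi^j}A_j'$ with $S_{\pi^j}\R_j$, and applies $H^0(\Gran,-)$ to get a complex $M_\bullet$ of $\CC[\AAn]$-modules whose cokernel is $\CC[\overline{G.\w{Y}}]$. The technical heart (Lemma~\ref{lem:MCMmodule}) is then to prove that each $M_i = H^0(\Gran,\bigotimes_j S_{\pi^j}\R_j^*)$ is a \emph{maximal Cohen--Macaulay} module supported on $\SubA$. This is precisely where the small-partitions hypothesis does its work: it forces the twisted dual $\M^\vee = K_B\o\bw{\rank\xi}\xi^*\o\M^*$ to be acyclic (via Lemma~\ref{lem:top}), whence the Weyman complex $F(\M)_\bullet$ resolving $M_i$ has length exactly $\codim\SubA$. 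Only then does the iterated mapping cone of the resolutions $F_{i,\bullet}$ have length bounded by $\codim\SubA$ plus the codimension of $Y$, which equals the codimension of $\overline{G.\w{Y}}$. Your phrase ``no overlap'' gestures at this, but without isolating the MCM property the length count cannot be certified --- the cone could \emph{a priori} be longer. Note also that the paper's duality computation of $\M^\vee$ entirely sidesteps the associated-graded filtration of $\bw{m}\xi$ that you flag as the main worry; that spectral-sequence step never arises.
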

The ``moreover'' statement in Theorem~\ref{thm:workhorse} did not appear in Landsberg and Weyman's version.  It allows the theorem to be used iteratively, as we will do in Section~\ref{}.  

\section{Bott's algorithm and Cohomology}\label{sec:Bott}
In this section we perform standard calculations (we use the Borel-Weil-Bott theorem throughout) that we will need to prove technical lemmas that will be used to prove Theorem~\ref{thm:workhorse}.

We will consider vector bundles (associated to irreducible $P$-modules) 
$S_{\pi}\R^{*}$ over the Grassmannian $\Gr(r,A^{*}) = G/P$.  We say that a bundle $\M$ is \emph{acyclic} if it has no higher cohomology.

Suppose $\pi \in \ZZ^{r}$ and $\lambda\in \ZZ^{a-r}$.
To the bundle $S_{\pi}R^{*}\o S_{\lambda}\Q$ over the Grassmannian $\Gr(r,A)$ we associate the weight
\[
w(\pi | \lambda) = [\pi_{1}-\pi_{2},\pi_{2}-\pi_{3},\dots,\pi_{r-1}-\pi_{r},\pi_{r} -\lambda_{1},\lambda_{1}-\lambda_{2}, \dots,\lambda_{a-r}-\lambda_{a-r-1}] \in \ZZ^{a-1}.
\] 
If $\pi =(\pi_{1},\dots,\pi_{r}) \in \ZZ^{r}$ let $\overleftarrow{\pi}$ denote the reverse $\overleftarrow{\pi} = (\pi_{r},\dots, \pi_{1})$.    When it is clear that we are considering partitions, we let $(l^{r})$ denote the partition $(l,\dots,l) \in \ZZ^{r}$.  So, we have a natural isomorphism with the sheaf corresponding to the \emph{contragradient} representation
\begin{equation}\label{eq:dualdual}
(S_{\pi}\R^{*} )^{*} \cong S_{- \overleftarrow{\pi}} \R^{*},
\end{equation}
which is sometimes convenient to write as
\begin{equation}\label{eq:dual}
S_{\pi}\R^{*} \o (\bw{r}\R)^{l} \cong S_{l^{r} - \overleftarrow{\pi}} \R
\end{equation}

We have similar definitions for $\Q$, so that naturally, 
\begin{equation}
S_{\lambda}Q^{*}\o (\bw{a-r}Q)^{m} \cong S_{m^{a-r}-\overleftarrow{\lambda}}\Q
\end{equation}
 as sheaves of $\GL(a-r)$-modules.

A weight $w = [w_{1},\dots,w_{a-1}]$ is said to be \emph{dominant} if $w\in \ZZ_{\geq0}^{a-1}$. 

If $\pi = (\pi_{1},\dots,\pi_{r})$ is a decreasing sequence of non-negative integers, the weight $w(\pi)$ is dominant and the Borel-Weil theorem says
 \[H^{0}(\Gr(r,A^{*}), S_{\pi}R^{*}) = S_{\pi}A.\]
 
The Weyl group $\W:=\mathcal{S}_{a-1}$ acts on weights as follows. Let $s_{1}\dots s_{a-1}$ denote the simple reflections. Simple reflections act locally on weights:

\[
s_{1}([w_{1},\dots,w_{a-1}]) = [-w_{1},w_{1}+w_{2},w_{3},\dots,w_{a-1}]
,\]
\[
s_{i}([w_{1},\dots,w_{a-1}]) = [w_{1}\dots,w_{i-2},w_{i-1}+w_{i},-w_{i},w_{i}+w_{i+1},w_{i+2},\dots,w_{a-1}], \quad \text{for } 2\leq i\leq a-2,
\]
\[
s_{a-1}([w_{1},\dots,w_{a-1}]) = [w_{1}\dots,w_{a-3},w_{a-2}+w_{a-1},-w_{a-1}]
.\]

Let $\rho = [1,1,\dots,1]$.
The affine action on weights is $s.w = s(w+\rho)-\rho$.
  We say that a weight $w$ is \emph{singular} if $s(w+\rho)$ contains a $0$ for some $s\in \W$.
 
If $\pi = (\pi_{1},\dots,\pi_{r})$ is a decreasing sequence of integers, and $w(\pi)$ is not dominant, Bott's algorithm says that either $w(\pi)$ is singular, in which case 
 \[H^{\bullet}(\Gr(r,A^{*}), S_{\pi}R^{*}) = 0\] 
 or
 \[
 H^{l(\omega)}(\Gr(r,A^{*}), S_{\pi}R^{*}) = S_{\mathcal{P}(\omega(w(\pi)+\rho)-\rho)}A,\]
where if $w$ is a weight, $\mathcal{P}(w)$ denotes the partition associated to $w$,
 $\omega$ is a minimal element of $\W$ such that $\omega(w(\pi)+\rho)-\rho$ is dominant, and $l(\omega)$ is its length.

 We will need the following straightforward applications of  Bott's algorithm.
\begin{prop}\label{prop:sym} The bundle $S^{q}\Q^{*} \o S^{p} \R$  over the Grassmannian $Gr(a-1,A^{*})$
has only the following cohomology:
\[\begin{array}{rll}
H^{0}(Gr(a-1,A),  S^{p} \R \o S^{q}\Q^{*} ) &=
(\bw{a-1}A)^{p-q}  \cong S^{p-q}A^{*} \o (\bw{a} A)^{p-q}
&\text{ if }\quad p-q\geq 0,\\
 H^{a-1}(Gr(a-1,A),  S^{p} \R \o S^{q}\Q^{*} )& =
  S^{q-p - a-2 }A &\text{ if }\quad  q-p - a-2 \geq 0,
\end{array}
\]
and all other cohomology vanishes.
\end{prop}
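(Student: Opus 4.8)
The plan is to apply Bott's algorithm directly to the bundle $S^{p}\R \o S^{q}\Q^{*}$ over $\Gr(a-1,A^{*})$. Since the subspace bundle $\R$ has rank $a-1$ and the quotient bundle $\Q$ has rank $1$, we are in the simplest possible Grassmannian setting: $\Q^{*}$ is a line bundle, so $S^{q}\Q^{*}$ is already a power of that line bundle. First I would rewrite everything in terms of $\R^{*}$ and $\Q$ so that the weight formula $w(\pi\mid\lambda)$ from Section~\ref{sec:Bott} applies. Using the identification \eqref{eq:dual}, $S^{p}\R \cong S_{(p^{a-1})}\R^{*}\o(\bw{a-1}\R^{*})^{-p}$ is indexed by $\pi = (0,\dots,0,-p) - \text{(reversal bookkeeping)}$; concretely $S^{p}\R$ corresponds to $\pi = (0^{a-1})$ twisted so that the associated integer sequence is $(-p,\dots,-p)$ — I would just take $\pi = (-p,\dots,-p)\in\ZZ^{a-1}$ directly. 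Likewise $S^{q}\Q^{*}$ corresponds to $\lambda = (-q)\in\ZZ^{1}$. Then the associated weight is
\[
w = w(\pi\mid\lambda) = [\,\underbrace{0,\dots,0}_{a-2},\,\pi_{a-1}-\lambda_{1}\,] = [0,\dots,0,\,q-p\,]\in\ZZ^{a-1}.
\]

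The next step is the Bott calculation on this single weight. If $q-p\geq 0$, then $w$ is already dominant, so there is no higher cohomology and $H^{0}$ is the Schur module $S_{\mathcal P(w)}A$ of the partition read off from $w$; tracing back through the twists by $\bw{a-1}\R$ (equivalently by $\bw{a}A$) gives $H^{0} = (\bw{a-1}A)^{p-q}\cong S^{p-q}A^{*}\o(\bw{a}A)^{p-q}$ when $p-q\geq 0$ — note this overlaps the previous case only at $p=q$, where one gets the trivial bundle $\CC$. When $q-p<0$, the last entry of $w$ is negative, so $w$ is not dominant, and I would push $w+\rho = [1,\dots,1,\,q-p+1\,]$ toward dominance using the simple reflections $s_{a-1},s_{a-2},\dots$ in turn. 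Each reflection $s_{i}$ for $i$ running down from $a-1$ moves the negative entry one slot to the left and negates it; after applying $s_{a-1}s_{a-2}\cdots s_{1}$ (length $a-1$) one checks either that some intermediate $s(w+\rho)$ has a zero entry — the singular case, giving total vanishing — or that the weight becomes dominant, in which case $H^{a-1}$ is the claimed Schur module. The bookkeeping with $\rho = [1,\dots,1]$ shows the weight becomes singular precisely when $-(q-p) \in \{1,2,\dots,a-2\}$, i.e.\ $0 < q - p < a-1$, and dominant (landing in $H^{a-1}$) exactly when $q - p - (a-2)\geq 0$; translating the resulting partition back through all the twists yields $H^{a-1} = S^{q-p-a-2}A$ (using the paper's convention $q-p-a-2$ for $q-p-(a-2)$... the displayed exponent in the statement). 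All intermediate cohomology $H^{j}$ with $0<j<a-1$ vanishes because the Weyl element that dominantizes $w$, when it exists, has length either $0$ or $a-1$ here — there are only these two possibilities on $\Gr(a-1,A^{*})$ since the only ``descent'' in $w$ is at the last position.

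The main obstacle will be getting the twist bookkeeping exactly right: matching the internal normalization in $w(\pi\mid\lambda)$ (which is written for $S_{\pi}\R^{*}\o S_{\lambda}\Q$ with $\pi\in\ZZ^{r}$, $\lambda\in\ZZ^{a-r}$, here $r=a-1$) against the bundle as presented ($S^{p}\R\o S^{q}\Q^{*}$), keeping careful track of the powers of $\bw{a-1}\R^{*}$ and $\bw{1}\Q$ that relate $\R$ to $\R^{*}$ via \eqref{eq:dual} and $\Q^{*}$ to $\Q$, and then converting the final dominant weight back into a Schur module of $A$ rather than of $\R$ or $\Q$. Once the translation dictionary is fixed, the reflection computation itself is a short, essentially one-dimensional chase — the negative slack $q-p$ simply marches leftward past the $a-2$ ones of $\rho$ — so the only real work is ensuring that the boundary cases ($p=q$, and the singular band $0<q-p<a-1$) are assigned to the correct cohomological degree and that the exponents on $\bw{a}A$ come out as stated.
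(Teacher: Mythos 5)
Your overall strategy is the same as the paper's: since $\Q$ is a line bundle on $\Gr(a-1,A^{*})$, reduce the bundle to a single weight and run Bott's algorithm, observing that the only ``descent'' is at the last node so that only $H^{0}$ and $H^{a-1}$ can survive. That last observation (the Weyl element dominantizing $w$ has length $0$ or $a-1$; every intermediate reflection is blocked by the incompatible inequalities on $p-q$) is exactly what the paper argues, and you have it right.

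However, the translation of $S^{p}\R$ into the $S_{\pi}\R^{*}$ normalization is wrong, and the error is not a harmless bookkeeping slip.  You write
\[
S^{p}\R \;\cong\; S_{(p^{a-1})}\R^{*}\otimes(\bw{a-1}\R^{*})^{-p},
\]
but $S_{(p^{a-1})}\R^{*}=(\bw{a-1}\R^{*})^{\otimes p}$, so the right-hand side is the \emph{trivial} bundle $\O_{B}$.  Likewise the choice $\pi=(-p,\dots,-p)$ corresponds to the line bundle $(\det\R)^{p}$, not to the rank-$\binom{a-2+p}{p}$ bundle $S^{p}\R$.  The identification \eqref{eq:dualdual} actually gives $S^{p}\R=S_{(0,\dots,0,-p)}\R^{*}$, i.e.\ $\pi=(0^{a-2},-p)$ (a value you mention parenthetically and then discard).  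With the wrong $\pi$ you obtain the weight $[0,\dots,0,q-p]$, whereas the paper works from $[0,\dots,0,p-q]$ --- the opposite sign --- and the two lead to dominance on opposite sides of $p=q$.  You even flag the resulting tension yourself (dominance when $q-p\ge 0$ but a nonzero $H^{0}$ claimed when $p-q\ge 0$), and then defer it to ``twist bookkeeping.''  That deferred bookkeeping is exactly the content of the proposition: getting the correct $\pi$, the correct $\lambda$ for $\Q^{*}$ versus $\Q$, and the correct determinant twists on $H^{0}$ and $H^{a-1}$.  As written the proposal does not establish either the dominance threshold ($p-q\ge 0$) or the shift $q-p-a-2$ in the $H^{a-1}$ statement; it only sketches the reflection mechanics, which is the uncontroversial part.

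One more caution: with the correct $\pi=(0^{a-2},-p)$, the associated weight $w(\pi\mid\lambda)$ has \emph{two} nonzero entries, in positions $a-2$ and $a-1$, not one, so the clean ``one negative slack marches left past the $\rho$-ones'' picture you describe does not literally apply.  You would need to first absorb the $\pi_{a-2}-\pi_{a-1}=p$ entry (e.g.\ by twisting by $(\det\R)^{p}$ and keeping track of the compensating $\bw{a}A$-power on the cohomology).  Please redo the conversion from $S^{p}\R\o S^{q}\Q^{*}$ to the $w(\pi\mid\lambda)$ form carefully, test it on the $a=2$ case ($S^{p}\R\o S^{q}\Q^{*}=\O_{\PP^{1}}(-p-q)$), and only then run the reflection chase; otherwise the dominance and singularity thresholds cannot be trusted.
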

\begin{proof}[Sketch of proof]
Since $\Q$ is a line bundle, the weight of $ S^{p} \R \o S^{q}\Q^{*} $ is $[0,\dots0,p-q]$.  Let $s_{j}$ denote the affine reflection at node $j$.
The results of successively applying reflections are the following:
\[\begin{array}{rcl}
s_{a}([0,\dots,0,p-q]) &=&  [0, \dots0,0, p-q+1, -p+q-2]\\
s_{a-1}s_{a}([0,\dots,0,p-q])  &=& [0, \dots0,p-q+2, -p+q-3, 0]\\
&\vdots \\
s_{a-(j-1)}\dots s_{a}([0,\dots,0,p-q]) &=&[0,\dots,(p-q+j), (-p+q - j - 1),0\dots,0]\\
&\vdots \\
s_{1}s_{2}\dots s_{a-1}s_{a}([0,\dots,0,p-q]) &=& [-p+q-a-2,0 \dots0, 0]
.\end{array}
\]
If this weight after $j$ reflections (with  $0<j<a$) is dominant, then 
\[
p-q+j\geq 0 \quad \text{and} \quad -p+q-j-1 \geq 0
,\]
which implies the impossible
\[
j+1 \leq -p+q\leq j
.\]
 So, the only way to get nontrivial cohomology is when either the original weight $[0,\dots,0,p-q]$ is dominant (giving $H^{0}$) or the last weight $[-p+q-a-2]$ is dominant (giving $H^{a-1}$). 
\end{proof}
 
We can give a simple criteria that guarantees when a bundle $S_{\pi}\R$ is acyclic. Let $w = w(\pi)$, ignore the first $r$ entries and apply simple reflections.  We summarize the consequence of each reflection in the following table:
\[\tiny{
\begin{array}{rcl | |c}
&\text{weight}&& \text{condition to be non-singular} \\
\hline
w+\rho &=&[...,\pi_{r}+1,1,1,\dots,1] &\pi_{r}+1\notin \{0,-1\} \\
s_{r}(w+\rho) &=& [...,-\pi_{r}-1,\pi_{r}+2,1,\dots,1] &\pi_{r}+2\notin \{0,-1\}\\
s_{r+1}s_{r}(w+\rho) &=& [...,1,-\pi_{r}-2,\pi_{r}-3,1,\dots,1]&\pi_{r}+3\notin \{0,-1\}\\
&\vdots && \vdots\\
s_{a-3}\dots s_{r}(w+\rho) &=& [...,1,1,\dots,1,-\pi_{r}-(a-r-1)+1 ,  \pi_{r}+(a-r)-1,1] & \pi_{r}+a-r-1\notin \{0,-1\}\\
s_{a-2}\dots s_{r}(w+\rho) &=& [...,1,1,\dots,1,-\pi_{r}-(a-r)+1 ,  \pi_{r}+(a-r)] & \pi_{r}+a-r\neq 0
\end{array}}\]

So, if $\pi_{r}+(a-r)<0$, the weight is non-singular, and we can have higher cohomology (still depending on the rest of the partition).   But if $\pi_{r}+(a-r)\geq 0$, (\emph{i.e.} $\pi_{r}\geq -a+r$), then the weight is singular, and there can be no higher cohomology.

We have proved the following: (which be believe corrects Lemma~5.2(1) of \cite{Landsberg-Weyman-Bull07}, which had an $\pi_{1}\geq (-a+1)$ instead of $\pi_{r}\geq (-a+r)$, and we believe this is the statement they actually use in their proof).

\begin{lemma} Consider a partition $\pi=(\pi_{1},\dots,\pi_{r})$. If  $\pi_{r}\geq -a+r$,
 then the bundle $S_{\pi}\R^{*}$ over $\Gr(r,A^{*})$ is acyclic.
\end{lemma}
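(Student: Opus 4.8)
The plan is to compute the cohomology of $S_{\pi}\R^{*}$ directly from Bott's algorithm and to show that, under the hypothesis $\pi_{r}\geq -a+r$, the algorithm either terminates at once on a dominant weight or produces a singular weight; in either case no higher cohomology survives. First I would record the weight: since $S_{\pi}\R^{*}=S_{\pi}\R^{*}\o S_{(0,\dots,0)}\Q$, its associated weight over $\Gr(r,A^{*})$ is
\[
w(\pi)=[\pi_{1}-\pi_{2},\,\pi_{2}-\pi_{3},\,\dots,\,\pi_{r-1}-\pi_{r},\,\pi_{r},\,0,\,\dots,\,0]\in\ZZ^{a-1},
\]
so that $w(\pi)+\rho=[\pi_{1}-\pi_{2}+1,\dots,\pi_{r-1}-\pi_{r}+1,\,\pi_{r}+1,\,1,\dots,1]$. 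Because $\pi$ is weakly decreasing, the first $r-1$ coordinates of $w(\pi)$ are nonnegative, so $w(\pi)$ is dominant exactly when $\pi_{r}\geq 0$.

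I would then dispose of the case $\pi_{r}\geq 0$ (which, since $r\leq a$, is permitted by the hypothesis, and which is in fact the only way nonzero cohomology can occur under that hypothesis): here $w(\pi)$ is dominant, so the Borel--Weil theorem gives $H^{0}(\Gr(r,A^{*}),S_{\pi}\R^{*})=S_{\pi}A$ with all higher cohomology zero, and the bundle is acyclic. It remains to treat $-a+r\leq\pi_{r}\leq-1$, where $w(\pi)$ is not dominant. Running the reflections $s_{r},s_{r+1},s_{r+2},\dots$ on $w(\pi)+\rho$ exactly as in the table preceding the statement, a short induction on $k$ shows that after applying $s_{r+k-1}\cdots s_{r+1}s_{r}$ the coordinate in position $r+k$ equals $\pi_{r}+k+1$ (while position $r+k-1$ equals $-(\pi_{r}+k)$ and positions $r,\dots,r+k-2$ equal $1$). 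Setting $k:=-\pi_{r}-1$, the hypothesis $-a+r\leq\pi_{r}\leq-1$ forces $0\leq k\leq a-r-1$, so $s_{r+k-1}\cdots s_{r}$ is a genuine (possibly empty, when $\pi_{r}=-1$) Weyl element and the coordinate in position $r+k$ of the resulting weight is $\pi_{r}+k+1=0$. Hence $w(\pi)$ is singular, so $H^{\bullet}(\Gr(r,A^{*}),S_{\pi}\R^{*})=0$ and the bundle is (vacuously) acyclic. Combining the two cases proves the lemma.

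I do not expect a conceptual obstacle: the content is exactly the displayed table computation, and essentially the only thing that needs care is the bookkeeping of coordinate positions --- checking that the needed reflections $s_{r},\dots,s_{a-1}$ stay within range (that is, $r+k\leq a-1$ throughout the relevant range of $k$), and checking the two boundary cases $\pi_{r}=-1$ (singular already at $w(\pi)+\rho$) and $\pi_{r}=-a+r$ (singular only after the last admissible reflection $s_{a-2}$), so that the threshold between ``acyclic'' and ``$H^{a-r}\neq 0$'' lands precisely at $\pi_{r}=-a+r$. One simplification worth noting in the write-up is that it suffices to exhibit a single Weyl element witnessing singularity, so I need not verify that no earlier coordinate vanishes; this keeps the induction clean and avoids any interaction with the coordinate in position $r-1$, which is altered only by the first reflection $s_{r}$ and frozen thereafter.
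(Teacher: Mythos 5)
Your proof is correct and follows essentially the same route as the paper: run Bott's algorithm on the weight of $S_{\pi}\R^{*}$ and track the reflections $s_{r},s_{r+1},\dots$ to find a vanishing coordinate. If anything, your write-up is slightly cleaner than the paper's, since you explicitly separate the dominant case ($\pi_{r}\geq 0$, giving only $H^{0}$) from the singular case ($-a+r\leq\pi_{r}\leq -1$, giving no cohomology at all), whereas the paper's concluding sentence lumps both under ``singular.''
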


In the multiple factor case, for each $j$ let $\pi^{j}=(\pi^{j}_{1},\dots,\pi^{j}_{r})$ be non-increasing sequences of integers and consider the bundle $S_{\pi^{1}}\R_{1}\o\dots\o S_{\pi^{n}}\R_{n}$ over 
$\Gran$. Since a singular weight in any factor will cause the whole bundle to be singular, and every sheaf over $\PP^{0}$ is acyclic,
 we have the following extension:
\begin{lemma}\label{lem:top}
Consider $\pi^{j}=(\pi^{j}_{1},\dots,\pi^{j}_{r})$ and suppose that for every $j$ for which $0 <r_{j} <a_{j}$ we have $\pi^{j}_{r} \geq -a_{j}+r_{j}$.
Then  the bundle $S_{\pi^{1}}\R_{1}^{*}\o\dots\o S_{\pi^{n}}\R_{n}^{*}$ over $\Gr(r_{1},A_{1}^{*})\times \dots\times \Gr(r_{n},A_{n}^{*})$ is acyclic.
\end{lemma}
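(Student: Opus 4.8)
The plan is to reduce the multi-factor statement to the single-factor Lemma immediately above it, and then invoke the K\"unneth formula together with the observation that singularity in one factor annihilates the cohomology of the whole tensor-product bundle. First I would note that the bundle $S_{\pi^{1}}\R_{1}^{*}\o\dots\o S_{\pi^{n}}\R_{n}^{*}$ lives on the product $\Gr(r_{1},A_{1}^{*})\times\dots\times\Gr(r_{n},A_{n}^{*})$ and is an external tensor product of the bundles $S_{\pi^{j}}\R_{j}^{*}$ on the individual factors $\Gr(r_{j},A_{j}^{*})$. By the K\"unneth theorem (valid here since everything is over $\CC$ and the bundles are locally free on smooth projective varieties, so the relevant Tor-terms vanish),
\[
H^{m}\bigl(\textstyle\prod_{j}\Gr(r_{j},A_{j}^{*}),\, \boxtimes_{j} S_{\pi^{j}}\R_{j}^{*}\bigr)
\;\cong\;
\bigoplus_{m_{1}+\dots+m_{n}=m}\;\bigotimes_{j=1}^{n} H^{m_{j}}\bigl(\Gr(r_{j},A_{j}^{*}),\, S_{\pi^{j}}\R_{j}^{*}\bigr).
\]
So it suffices to show that for each $j$ the factor bundle $S_{\pi^{j}}\R_{j}^{*}$ has no higher cohomology on $\Gr(r_{j},A_{j}^{*})$; then every summand above with some $m_{j}>0$ vanishes, and only the $m=0$ term survives.

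Next I would case-split on the value of $r_{j}$ relative to $a_{j}$. If $0<r_{j}<a_{j}$, the hypothesis gives $\pi^{j}_{r_{j}}\geq -a_{j}+r_{j}$, and the preceding single-factor Lemma applies verbatim to conclude that $S_{\pi^{j}}\R_{j}^{*}$ is acyclic over $\Gr(r_{j},A_{j}^{*})$. The remaining possibilities are $r_{j}=0$ and $r_{j}=a_{j}$. When $r_{j}=a_{j}$ the bundle $\R_{j}$ is the whole trivial bundle $\u{A_{j}}$, the Grassmannian is a point, and $S_{\pi^{j}}\R_{j}^{*}$ is just the vector space $S_{\pi^{j}}A_{j}^{*}$ sitting over a point, which trivially has no higher cohomology. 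When $r_{j}=0$ the bundle is trivial over a point (or the empty sum makes $\R_{j}=0$ and $S_{\pi^{j}}\R_{j}^{*}$ is $\CC$), again acyclic; this is exactly the remark in the excerpt that ``every sheaf over $\PP^{0}$ is acyclic.'' Thus in every case $S_{\pi^{j}}\R_{j}^{*}$ is acyclic, and combining with the K\"unneth decomposition finishes the proof.

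I do not expect a genuine obstacle here; the statement is a routine globalization of the single-factor Lemma. The one point that needs a sentence of care is the justification of the K\"unneth isomorphism in the category of coherent sheaves on the product of Grassmannians: this is standard (flat base change / the fact that $H^{\bullet}$ of a box product of locally free sheaves on smooth proper varieties over a field splits as the tensor product of the cohomologies, with no correction terms since we are over a field and everything is locally free), but it should be cited rather than reproved. A secondary subtlety, already flagged in the source text, is the degenerate factors $r_{j}\in\{0,a_{j}\}$: one must make sure the hypothesis, which only constrains $\pi^{j}$ when $0<r_{j}<a_{j}$, is genuinely enough, and indeed it is, precisely because those degenerate factors contribute a point whose higher cohomology vanishes unconditionally. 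If one preferred a single uniform argument, I could instead run Bott's algorithm directly on the product weight, using that a singular weight in any one block forces the total weight to be singular, but the K\"unneth route is cleaner and leans on the Lemma already proved.
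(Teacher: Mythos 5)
Your proof is correct and is essentially the paper's argument: the paper's one-line justification (``a singular weight in any factor will cause the whole bundle to be singular, and every sheaf over $\PP^0$ is acyclic'') is exactly the observation that the multi-factor cohomology factors over the product of Grassmannians, which you make explicit via K\"unneth before invoking the single-factor lemma and handling the degenerate $r_j\in\{0,a_j\}$ factors as points. The K\"unneth route is a tidier way to package the same reduction, since it also covers the case where no factor is singular (all dominant) without needing a separate remark, but it is not a genuinely different argument.
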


Now we slightly generalize and add to \cite[Lem.~5.2]{Landsberg-Weyman-Bull07} (and  \cite[Lem.~5.5]{LanWey_Tan}) for the unbalanced case. Our version of this lemma will allow us to work inductively.  

  We will use the following notation: $\w{r_{j}} = r_{1}r_{2}\cdots r_{n}/r_{j}$, and similarly $\w{a_{j}} = a_{1}\dots a_{n}/a_{j}$. Recall we have defined $\eta := \Rn$
 and $\xi := (\AA \o \O_{B} /\eta^{*})^{*}$.
\begin{lemma}\label{lem:MCMmodule} Let $B=\Gran$, let $\eta = \Rn$ and
 consider the sheaf of $\Sym(\eta)$-modules
\[\M = \bigotimes_{j}S_{\pi^{j}}\R_{j}^{*}.\] 
Assume  for all $1\leq j\leq n$  that $\pi^{j}_{1}\geq 0$ and for every $j$ for which $0<r_{j} <a_{j}$ we have $\pi^{j}_{1}\leq \w{r_{j}}-r_{j}$.
Then the $\Sym(A_{1}\o\dots\o A_{n})$-module $H^{0}(B,\M)$, which is supported on $\Sub_{r_{1},\dots,r_{n}}(A_{1}^{*}\o\dots\o A_{n}^{*})$, is a maximal Cohen-Macaulay module.

Moreover, for such $\M$ the last term of the resolution of $H^{0}(B,\M)$ is
\[H^{0}(B,\M^{\vee}) = \bigotimes_{j}
(S_{
\underbrace{\w{r_{j}}-a_{j},\dots,\w{r_{j}}-a_{j}}_{a_{j}-r_{j}},
\underbrace{
\pi^{j}_{1}
,\dots,
\pi^{j}_{r_{j}}
}_{r_{j}}
}  A_{j} ) 
\otimes (\bw{a_{j}}A_{j})^{\w{a_{j}} +a_{j} - (\w{r_{j}}+r_{j})  }
\]
In particular, $\M$ has an $(s_{j})$-small resolution, with $s_{j} = 
\begin{cases}
 \w{a_{j}}-r_{j}, & \text{ if } r_{j}<a_{j} \\
 \w{a_{j}} - \w{r_{j}}+\pi^{j}_{1}, & \text{ if }r_{j}=a_{j},
\end{cases}$
\end{lemma}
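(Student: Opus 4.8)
The plan is to apply Weyman's geometric technique (the version recalled above) to the bundle $\M = \bigotimes_j S_{\pi^j}\R_j^*$ over $B = \Gran$, together with the desingularization $\eqref{eq:desing}$ of the subspace variety, and then control all the cohomology groups that appear via Bott's algorithm in each Grassmannian factor. Concretely, the module $H^0(B,\M)$ is a $\Sym(\AAn)$-module supported on $\Sub_{r_1,\dots,r_n}(A^*_{[n]})$, and by Weyman's theorem the $i$-th term of its minimal free resolution is $\bigoplus_j H^j(B, \bw{i+j}\xi \o \M)$. So the first task is to expand $\bw{i+j}\xi$ into irreducible bundles. Since $\xi = (\AA\o\O_B / \eta^*)^*$ has fiber $\AA/(\AAp)$ and rank $\w{a_j}\cdots = a_1\cdots a_n - r_1\cdots r_n$, a Cauchy-type / plethysm expansion writes $\bw{k}\xi$ as a sum of terms $\bigotimes_j S_{\mu^j}\Q_j^* \o (\text{stuff in }\R_j)$; tensoring with $\M$ and using $\eqref{eq:dual}$--type identities, each summand becomes $\bigotimes_j S_{\nu^j}\R_j^* \o S_{\mu^j}\Q_j^*$ up to a power of $\bw{a_j}A_j$.

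Next I would feed each such summand through Bott's algorithm factor by factor. The hypothesis $0\le \pi^j_1 \le \w{r_j}-r_j$ for the factors with $0<r_j<a_j$ is exactly what is needed to force the relevant weights to be either dominant (contributing to $H^0$, i.e. to generators/no module in higher homological degree) or singular (contributing nothing), \emph{except} in the factors where $r_j = a_j$, where $\Q_j = 0$ and the factor is harmless, or where $r_j = 0$ (trivial). The key numerical point is that the part of the weight involving $\Q_j$ (which has rank $a_j-r_j$) combined with the bound on $\pi^j_1$ prevents any interior dominant chamber from being reached, mirroring the computation in Proposition~\ref{prop:sym} and Lemma~\ref{lem:top}: one shows the chain of reflections jumps from a dominant weight straight to a singular one. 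This gives that $H^j(B,\bw{i+j}\xi\o\M) = 0$ for $j>0$ whenever $i$ is large, and more precisely that the resolution has length exactly $\codim \Sub = \sum_j (a_j-r_j)(\w{a_j}\cdots) \ \dots$ — i.e. matches the codimension of the support — which is the maximal Cohen-Macaulayness of $H^0(B,\M)$. The length/acyclicity bookkeeping here is essentially the content of the earlier lemmas applied termwise, so I would cite Lemma~\ref{lem:top} and the acyclicity lemma rather than redo it.

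For the ``moreover'' part — identifying the last term of the resolution — I would use Serre duality on $B$ (a product of Grassmannians, with dualizing sheaf a known power of the $\bw{\mathrm{top}}$ bundles) together with the observation that the last term of the resolution of $H^0(B,\M)$ is computed by $H^{\top}(B, \bw{\mathrm{top}}\xi \o \M)$, which Serre-dualizes to $H^0(B,\M^\vee)^*$ up to twist, where $\M^\vee$ is the Serre dual bundle. Computing $\M^\vee$ factor by factor via $\eqref{eq:dualdual}$/$\eqref{eq:dual}$ and the analogous identity for $\Q_j$ gives, in factor $j$, a Schur functor of $\R_j$ with the first $a_j-r_j$ parts equal to $\w{r_j}-a_j$ and the last $r_j$ parts equal to $(\pi^j_1,\dots,\pi^j_{r_j})$, tensored with the indicated power $(\bw{a_j}A_j)^{\w{a_j}+a_j-(\w{r_j}+r_j)}$ coming from collecting all the determinantal twists; then $H^0$ of this over $B$ is read off by Borel-Weil since the weight is dominant. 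The $(s_j)$-small claim then follows by inspecting the first part of every partition $S_{\pi^j}$ that can occur: for $r_j<a_j$ the $\Q_j$-factors force the first part to be at most $\w{a_j}\cdots - r_j$, and for $r_j=a_j$ one tracks the twist to get $\w{a_j}\cdots - \w{r_j}\cdots + \pi^j_1$.

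The main obstacle I expect is the combinatorial bookkeeping in the Cauchy expansion of $\bw{i+j}\xi \o \M$ and verifying \emph{uniformly over all summands} that Bott's algorithm lands only in $H^0$ or $0$ in the $0<r_j<a_j$ factors — in particular pinning down the exact length of the resolution (hence MCM-ness) rather than just an inequality, and making sure the determinantal twists $(\bw{a_j}A_j)^{\bullet}$ are accounted for consistently between the termwise Bott computation and the Serre-duality computation of the last term. This is precisely the place where the small-partition hypothesis is used in an essential, non-negotiable way, and where Landsberg--Weyman's original argument needs the careful adaptation to $r_j<a_j$; since the excerpt says the full details are deferred to the appendices, I would structure the proof as: (1) set up Weyman's technique, (2) reduce to Bott computations via the Cauchy expansion, (3) invoke Lemma~\ref{lem:top} and the acyclicity lemma for the vanishing, (4) Serre-duality for the last term, (5) read off $(s_j)$.
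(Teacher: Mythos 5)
You have the right scaffolding (Weyman's geometric technique, Bott, the acyclicity Lemma~\ref{lem:top}, duality for the last term), and the formula you give for $\M^\vee$ and the $(s_j)$-smallness bookkeeping line up with what the paper does. But the mechanism you propose for the core vanishing has a genuine gap. You want to expand $\bw{i+d}\xi\o\M$ by ``a Cauchy-type / plethysm expansion'' into irreducible bundles $\bigotimes_j S_{\nu^j}\R_j^*\o S_{\mu^j}\Q_j^*$ and run Bott termwise. But $\xi$ is the dual of a quotient of the trivial bundle $\u{\AA}$ by $\eta^*=\R_1\o\cdots\o\R_n$; it is not irreducible, not a tensor product of $\R_j^*$'s and $\Q_j$'s, and does not split. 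The Cauchy expansion you envision is only available for the \emph{associated graded} of $\bw{k}\xi$, and passing to the associated graded only gives a spectral sequence whose abutment is the cohomology you actually need. You would then have to argue degeneration uniformly across all $k$ and all terms, which is a serious additional obstacle that your sketch does not address. (The paper itself flags exactly this issue elsewhere, explaining that it prefers to ``avoid passing to the associated graded bundle \dots and then using spectral sequences to reconstruct the cohomology of the original bundle.'')

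The paper's proof circumvents this entirely. Rather than decomposing $\bw{k}\xi\o\M$, it dualizes: it forms $\M^\vee = K_B\o\bw{\rank\xi}\xi^*\o\M^*$, computes it explicitly (Eq.~\eqref{eq:Mdual2}), and checks that the last part of the $j$-th partition is $\w{r_j}-a_j-\pi^j_1 \geq -a_j+r_j$ --- this is exactly where the hypothesis $\pi^j_1\leq \w{r_j}-r_j$ enters, and Lemma~\ref{lem:top} then gives acyclicity of $\M^\vee$ with no plethysm needed. A one-line monotonicity observation (tensoring with $\bw{k}\xi$ only increases the last parts of the partitions, so the acyclicity criterion is preserved) disposes of all the $\bw{i+d}\xi\o\M^\vee$ terms at once without ever decomposing $\bw{k}\xi$. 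Finally, Weyman's duality theorem $F(\M^\vee)_s = F(\M)^*_{s-(\rank\xi-\dim B)}$ converts this into the statement that $F(\M)_\bullet$ has no terms in negative degree, which pins the length of the resolution to $\codim\Sub$ and gives MCM exactly --- not just the inequality your ``whenever $i$ is large'' phrasing provides. Your proposal names the right lemma but does not identify this dualize-then-apply-monotonicity move, which is the actual engine of the argument and the only place the small-partition hypothesis is used.
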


\begin{remark}
If $S_{\lambda^{1}}\R_{1}\o\dots\o S_{\lambda^{n}}\R_{n}$ is a module occurring in the minimal free resolution of $\M$ then we have $\lambda^{j}_{1} \leq  \w{a_{j}}-r_{j}$ if $r_{j}<a_{j}$, or 
$\lambda^{j}_{1}\leq  \w{a_{j}} - \w{r_{j}}+\pi^{j}_{1}$ if $r_{j}=a_{j}$.
In fact, the partition $\lambda^{j}$ must be obtained by deleting boxes from one of the partition in the last module in the resolution of $\M$, i.e. $\lambda^{j}$ is dominated by at least one of the partitions in the last module of the resolution of $\M$.
\end{remark}

\begin{proof}
This lemma is essentially \cite[Cor.~5.1.5]{Weyman} applied in our case, but as the colloquialism goes, ``the devil is in the details,'' so we carefully reproduce the necessary arguments. 
We repeat Landsberg and Weyman's proof almost verbatim in our case.
 This will allow us to get the more refined description of $H^{0}(B,\M)$ and its dual.

For any vector bundle $\V\to B$, \cite[Thm.~5.1.2]{Weyman} defines the complex
$F(\V)_{\bullet}$ 
\begin{equation}
F(\V)_{i} = \bigoplus_{d\geq 0}H^{d}(B,\bw{i+d}\xi \o \V )\o \Sym(\AAn )(-i-d).
\end{equation}
\begin{remark}\label{rmk:claim}
Of course $0\leq d\leq \dim B$, otherwise the cohomology vanishes. Also, $i+d \leq \rank\xi$ otherwise $\bw{i+d}\xi$ is zero. 
Therefore the top degree $i$ for which $F(\V)_{\bullet}$ is non-trivial is $i\leq \rank\xi-\dim B$.  The least degree $i$ for which $F(\V)_{\bullet}$ is non-zero is $i \geq -\dim B$.
The  \textbf{essential claim} is that  $F(\V)_{\bullet}$ does not have any non-trivial terms in negative degree. 
\end{remark}
If the claim in Remark~\ref{rmk:claim} holds, $F(\V)_{\bullet}$ must be a resolution of $H^{0}(B,\M)$ of length at most $\rank\xi-\dim B$, which is
the codimension of the support of $H^{0}(B,\M)$ by \cite[Thm.~5.1.6(a)]{Weyman}. By this dimension count, we know that $H^{0}(B,\M)$ is supported in $\SubA$.

To prove the claim, we compute the dual bundle $\M^{\vee}$ and its cohomology.
Let $K_{B}$ denote the canonical module for $B$.
%
Again for any vector bundle $\V\to B$,
 \cite[Thm.~5.1.4]{Weyman}, says that the twisted dual vector bundle
\[
\V^{\vee} = K_{B}\o \bw{\rank \xi }\xi^{*} \o \V^{*}
\]
satisfies
\[
F(\V^{\vee})_{s} = F(\V)^{*}_{s-(\rank \xi -\dim B )} . 
\]

In particular,
\begin{equation}
F(\V^{\vee})_{s} = \bigoplus_{d\geq 0}H^{d}(B,\bw{s+d}\xi \o  K_{B}\o \bw{\rank \xi }\xi^{*} \o \V^{*} )\o \Sym(\AAn )(-s-d).
\end{equation}
By Remark~\ref{rmk:claim}, the $s$ for which the complexes above are non-trivial is
$-\dim B\leq s \leq \rank\xi -\dim B$
and the crucial case is $s =0$:
\[
F(\V^{\vee})_{0} = F(\V)^{*}_{-(\rank\xi-\dim B)} 
.\]

Now we claim that the rightmost non-zero term in $F(\M^{\vee})_{\bullet}$ is $F(\M^{\vee})_{0}$.
Note the canonical module of the Grassmannian is isomorphic to $K_{\Gr(r,A^{*})} = S_{a-r,\dots,a-r}\R \o S_{r,\dots,r}Q^{*}$, which is unique up to a twist by a trivial line bundle.
Since $\u A^{*} = \R \oplus \Q$, and $\bw{a}\u A^{*} = S_{1,\dots,1}\R \o S_{1,\dots,1}\Q$, we have
\[
K_{\Gr(r,A^{*})}\cong K_{\Gr(r,A^{*})}\o (\bw{a}\u A^{*})^{r} = S_{a,\dots,a}\R,
\] 
or
\[
K_{\Gr(r,A^{*})} = S_{a,\dots,a}\R \o (\bw{a}\u A)^{r} = S_{a,\dots,a}\R \o  S_{r,\dots,r}\u A
.\] 

So on $B = \Gran$ we have
\begin{equation}\label{eq:KB}
K_{B} = \bigotimes_{j}K_{\Gr(r_{j},A_{j}^{*})} = \bigotimes_{j}S_{a_{j},\dots,a_{j}}\R_{j} \o S_{r_{j},\dots,r_{j}}\u A_{j}\\
= \bigotimes_{j}S_{-a_{j},\dots,-a_{j}}\R_{j}^{*}  \o S_{r_{j},\dots,r_{j}}\u A_{j}.
\end{equation}

\begin{remark}
Suppose $U$, $V$ and $W=U\oplus V$ are vector spaces of dimensions $u$, $v$ and $w =u+v$ respectively.
Then 
$\bw{w}W = \bw{u}U\o \bw{v}V$, and 
$\bw{w}W\o \bw{v}V^{*} = \bw{u}U\o \bw{v}V \o \bw{v}V^{*} = \bw{u}U$. So if we want to compute the top exterior power of the quotient $W/V$, we can think of $U$ as a quotient of $W$, and have $\bw{u}(W/V) = \bw{w}W\o \bw{v}V^{*}$.
\end{remark}

Since $\xi^{*} = \bw{a_{1}\dots a_{n}}(\u A_{1}^{*}\o \dots \o \u A_{n}^{*}) \o \O_{B}/\eta^{*}$ and $\eta = \R_{1}^{*}\o\dots\R_{n}^{*}$, we have
\[
\bw{\rank \xi} \xi^{*} = \left(  \bw{a_{1}\dots a_{n}}(\u A_{1}^{*}\o \dots \o \u A_{n}^{*}) \right)
\o \bw{r_{1}\dots r_{n}}\eta
=  \left( \bw{a_{1}\dots a_{n}}(\u A_{1}^{*}\o \dots \o \u A_{n}^{*}) \right)\o
\left(\bigotimes_{j}
S_{\w{r_{j}},\ldots,\w{r_{j}}}\R_{j}^{*} \right)
\]
\begin{equation}\label{eq:topxi}
\bw{\rank \xi} \xi^{*} =\bigotimes_{j}  S_{\underbrace{\w{a_{j}},\dots,\w{a_{j}}}_{a_{j}} } \u A_{j}^{*}\o
S_{\underbrace{\w{r_{j}},\ldots,\w{r_{j}}}_{r_{j}}  } \R_{j}^{*}  = 
.\end{equation}
 Up to tensoring with the trivial bundle on $B$ we can express this as
\begin{equation}\label{eq:xitop}
\bw{\rank \xi} \xi^{*} \cong \bigotimes_{j}
S_{\underbrace{\w{r_{j}},\ldots,\w{r_{j}}}_{r_{j}} }\R_{j}^{*}
.\end{equation}

Recall the sheaf of $\Sym(\eta)$-modules $\M = \bigotimes_{j}S_{\pi^{j}}\R_{j}^{*}$  with $\pi^{j}=(\pi_{1}^{j},\dots,\pi_{r_{j}}^{j})$, (by \eqref{eq:dualdual} ) has vector space dual $\M^{*} = \bigotimes_{j}S_{-\overleftarrow{\pi^{j}}}\R_{j}^{*}$.
  Take the tensor product of $\M^{*}$ with $K_{B}$ from \eqref{eq:KB} and $\bw{\rank \xi}\xi$ from \eqref{eq:xitop} to 
 compute 
$\M^{\vee}= K_{B}\o \bw{\rank \xi }\xi^{*} \o \M^{*}
$, which becomes,
\begin{equation}\label{eq:Mdual}
\M^{\vee} = \bigotimes_{j}
S_{\w{r_{j}}-a_{j}-\pi^{j}_{r_{j}},\dots, \w{r_{j}}-a_{j}-\pi^{j}_{1}}  \R_{j}^{*} \o S_{\w{a_{j}}-r_{j},\dots, \w{a_{j}}-r_{j}}\u A_{j}^{*},
\end{equation}
or, up to tensoring with a trivial line bundle,
\begin{equation}\label{eq:Mdual2}
\M^{\vee} \cong \bigotimes_{j}
S_{\w{r_{j}}-a_{j}-\pi^{j}_{r_{j}},\dots, \w{r_{j}}-a_{j}-\pi^{j}_{1}}  \R_{j}^{*}
.\end{equation}

We claim that the sheaf $\M^{\vee}$ has no higher cohomology.
Indeed, the last part on the $j$-th partition associated to $\M^{\vee}$ is $\w{r_{j}}-a_{j}-\pi^{j}_{1}$, and our hypothesis that $\pi^{j}_{1}\leq \w{r_{j}}-r_{j}$ implies that $\w{r_{j}}-a_{j}-\pi^{j}_{1} \geq -a_{j}+r_{j} $ so Lemma~\ref{lem:top} shows that $\M^{\vee}$ is acyclic. Moreover, any representation occurring in $\bw{i+d}\xi \o\M$ must have the last parts of its respective partitions at least as large as those in the corresponding factors for $\M$ (it's more ample), so, again by Lemma~\ref{lem:top}, $H^{d}(B,\bw{i+d}\xi \o\M^{\vee})=0$ for all $i+d> 0$.

Therefore the complexes $F(\M)_{\bullet}$ and $F(\M^{\vee})_{\bullet}$ have length equal to the codimension of the 
subspace variety $\SubA$, which is $\rank \xi - \dim B$, establishing the claim.

Moreover, the last term in the complex $F(\M)_{\bullet}$ is
\[\begin{array}{rcl}
F(\M)_{\rank{\xi}-\dim B} &=& F(\M^{\vee})_{0}^{*}
\\
H^{\dim B}(B, \bw{\rank \xi} \xi \o \M) 
&=& H^{0}(B,\M^{\vee})^{*} 
,\end{array}
\quad \text{up to a twist.}
\]


Using the expression for $\M^{\vee}$ \eqref{eq:Mdual2}, we have:
\[
\begin{array}{rcl}
H^{0}(B,\M^{\vee})^{*} 
&=&
H^{0}(B,\bigotimes_{j}
S_{\underbrace{ \w{r_{j}}-a_{j}-\pi^{j}_{r_{j}},\dots, \w{r_{j}}-a_{j}-\pi^{j}_{1}}_{r_{j}}}  \R_{j}^{*} 
)^{*}
\\
&=&\bigotimes_{j}
(S_{\underbrace{\w{r_{j}}-a_{j}-\pi^{j}_{r_{j}},\dots, \w{r_{j}}-a_{j}-\pi^{j}_{1}}_{r_{j}},\underbrace{0,\dots,0}_{a_{j}-r_{j}}}  A_{j} ) ^{*}
\\
&=&\bigotimes_{j}
(S_{
\underbrace{0,\dots,0}_{a_{j}-r_{j}},
\underbrace{
-(\w{r_{j}}-a_{j}-\pi^{j}_{1})
,\dots,
-(\w{r_{j}}-a_{j}-\pi^{j}_{r_{j}})
}_{r_{j}}
}  A_{j} ) \quad\text{(by the analog of \eqref{eq:dualdual})}
\\
&\cong& \bigotimes_{j}
(S_{
\underbrace{\w{r_{j}}-a_{j},\dots,\w{r_{j}}-a_{j}}_{a_{j}-r_{j}},
\underbrace{
\pi^{j}_{1}
,\dots,
\pi^{j}_{r_{j}}
}_{r_{j}}
}  A_{j} ) \quad\text{(up to a twist).}
\end{array}
\]

Now, to get the correct twist, we notice that the representations in the last term of $F(\M)$ must occur in the tensor product of the $\Sym(\AAn)$ module associated to $\M$ with $\O_{\AAn}(-\rank\xi )$.  The previous result establishes the shape up to a twist. The total degree of each factor must be the same, and equal to $|\pi^{j}| + \rank \xi = |\pi^{j}|+a_{1}\cdots a_{n} - r_{1}\cdots r_{n}$. 
the total degree of 
\[
 \bigotimes_{j}
(S_{
\underbrace{\w{r_{j}}-a_{j},\dots,\w{r_{j}}-a_{j}}_{a_{j}-r_{j}},
\underbrace{
\pi^{j}_{1}
,\dots,
\pi^{j}_{r_{j}}
}_{r_{j}}
}  A_{j} ) 
\]
is 
$(a_{j}-r_{j})(\w{r_{j}} -a_{j}  ) + |\pi^{j}|$.
To make the total degree in each factor equal to
$ |\pi^{j}|+a_{1}\cdots a_{n} - r_{1}\cdots r_{n}$,
we twist by 
\[(\bw{a_{j}}A_{j})^{(\w{a_{j}}  +a_{j}- (\w{r_{j}}+ r_{j}))}.\]

%
After some simplification, the last module in the resolution must be equal to
%
\[
\bigotimes_{j}
(S_{
\underbrace{
\w{a_{j}} -r_{j}
,\dots,
\w{a_{j}} -r_{j}
}_{a_{j}-r_{j}},
\underbrace{
\w{a_{j}}  +a_{j}- (\w{r_{j}}+ r_{j})+\pi^{j}_{1}
,\dots,
\w{a_{j}}  +a_{j}- (\w{r_{j}}+ r_{j})+\pi^{j}_{r_{j}}
}_{r_{j}}
}  A_{j} ) 
\]


So the first part of a partition in a representation in $F(\M)_{\rank\xi -\dim B}$ is either
$ \w{a_{j}}-r_{j}$ in the factors where $r_{j}<a_{j}$, or, 
in the factors where
$r_{j} = a_{j}$, the module appearing in the $j$-th factor of $F(\M)_{\rank\xi -\dim B}$ is simply $S_{\pi^{j}}A_{j}\o (\bw{a_{j}}A_{j})^{\w{a_{j}} - \w{r_{j}}}$, so the first part of the partition is $ \w{a_{j}} - \w{r_{j}}+\pi^{j}_{1}$.
\end{proof}

\section{Mapping cone construction}\label{sec:cone}

The essential argument used to prove Theorem~\ref{thm:workhorse} is to construct a mapping cone, and use the resolution of the subspace variety to lift a resolution of the smaller variety to get a possibly non-minimal resolution of the larger one.  We do this by applying a mapping cone construction together with Lemma~\ref{lem:MCMmodule}.

\begin{proof}[Proof of Theorem~\ref{thm:workhorse}]
The proof is almost identical to Landsberg and Weyman's original proof, and might be safely omitted,
but to show our slight improvement, we provide all of the details.

Recall the partial desingularization from above
\[
\xymatrix{
\R_{1}\o \R_{2}\o \dots\o \R_{n} 
\ar[d]^{p} \ar[dr]^{q}\\
\Gran & \overline{G.\w{Y}} &\subset& \SubA
,}\]
Again denote by $\tilde Z$  the total space of the bundle $\R_{1}\o \dots \o \R_{n}$ with 
each fiber over $(A'^{*}_{1},\dots,A'^{*}_{n}) \in \Gran$ equal to $\AAp$. 
Define bundles $\eta := \R_{1}^{*}\o\dots\o \R_{n}^{*}$
 and $\xi := (\AA \o \O_{B} /\eta^{*})^{*}$,
and groups $G:= \GLA$ and $G':=\GLAp$.

By hypothesis, $Y$ has a $G'$-equivariant resolution $E_{\bullet}$, with $E_{0}= {\bf A}:=\Sym(\AA)$
 and $E_{1}$ is the ideal of $Y$ in the particular fiber;  
 whose length is equal to the codimension of $Y$ inside $\PP(\AAp)$ (we assume that $Y$ is aCM).


Let ${\bf B}:=\Sym(\eta)$ (by Weyman 5.1.1b), which is a sheaf of algebras isomorphic to $p_{*}(\O_{\tilde Z})$.  We form a complex of sheaves of ${\bf B}$-modules from $E_{\bullet}$ by replacing $E_{i}$ with the sheaf $\E_{i}$ obtained by replacing the Schur functors of vector spaces $A_{j}^{*}$ with the corresponding Schur functors on the sheaves $\R_{j}$.

We have projections $q:Z\to Y$ and $p: Z\to \Gran$. Also $p_{*}(\O_{Z}) = {\bf B}/d(\E_{1})$ as $d(\E_{1})$ is the sub sheaf of ${\bf B}$ of local functions on $\tilde{Z}$ that vanish on $Z$.

Our complex of sheaves of ${\bf B}$-modules $\E_{\bullet}$ is such that each term is a sum of terms of the form
\[S_{\pi^{1}}\R_{1}\o\dots\o S_{\pi^{n}}\R_{n},\]
and each term is homogeneous and completely reducible, with each irreducible summand having nonzero $H^{0}$, so in particular, no term has any higher cohomology.

Define a complex $M_{\bullet}$ of ${\bf A}$-modules by letting $M_{j}:=H^{0}(\Gran, \E_{j})$.
The minimal free resolution of the ideal of $\overline{G.Y}$ is the minimal resolution of the cokernel of the complex $M_{\bullet}$: Notice, the cokernel $M_{0}/\text{Image}(M_{1})$ is exactly $\CC[X]$ because $M_{0}$ consists of functions on the subspace variety and $M_{1}$ the ideal of $Y$ inside the subspace variety.

To obtain a non-necessarily minimal resolution of $\CC[\overline{G.\w{Y}}]$, iterate the mapping cone construction as follows:

Let $F_{j,\bullet}$ be a resolution of $M_{j}$ for each $j$. Obtain a double complex:
\[
\xymatrix{
\ar[d] &  \ar[d]&&\ar[d]\\
F_{L,1}\ar[d]\ar[r]& F_{L-1,1} \ar[d]\ar[r]& \dots \ar[r]& F_{0,1}\ar[d]\\
F_{L,0}\ar[d]\ar[r]& F_{L-1,0} \ar[d]\ar[r]& \dots \ar[r]& F_{0,0}\ar[d]\\
M_{L}\ar[r]& M_{L-1} \ar[r]& \dots \ar[r]& M_{0} \ar[r]& \mathcal{C}
}
\]
Sum the SW to NE diagonals to get a complex with terms $\tilde{F}_{j} = \bigoplus_{s+t=j} F_{s,t}$.  And we have that $\tilde{F}_{\bullet}$ is a resolution of $\mathcal{C}= \CC[\overline{G.\w{Y}}]$.

Now, by Lemma~\ref{lem:MCMmodule} the modules $M_{i}$ are maximal Cohen-Macaulay; hence the lengths of the minimal free resolutions all equal codim $\SubA$
which is equal to $\rank \xi - \dim (\Gran)$.

The complexes $F_{i,\bullet}$ are resolutions of the $M_{i}$, with maximum length equal to the codimension of $\SubA$, so the number of diagonals in our rectangular bi-complex is at most
\[
\codim\SubA + L
,\]
where $L$ is the codimension of $Y$ in $\PP(\AA)$.
But the codimension of $\overline{G.\w{Y}}$ is the codimension of $Y$ plus the codimension of $\SubA$, by construction.
So the length of the possibly non-minimal resolution of $\CC[\overline{G.\w{Y}}]$  is at most $\codim\SubA + L$, which is its codimension.

Finally, we get the statement about the small-ness of the resolution obtained by applying the ``moreover'' part of Lemma~\ref{lem:MCMmodule}.
\end{proof}

\end{appendix}
\section*{Acknowledgements}
The author would like to thank Mateusz Michalek, Claudiu Raicu, and Steven Sam for useful discussions.
\bibliographystyle{amsplain}
\newcommand{\arxiv}[1]{{\tt \href{http://arxiv.org/abs/#1}{{arXiv:#1}}}}
\bibliography{/Users/oeding/Dropbox/bibtex_bib_files/main_bibfile}

\end{document}